\documentclass[11pt]{amsart}

\usepackage{epigamath-voisin}


\usepackage[english]{babel}


\numberwithin{equation}{section}


\usepackage[shortlabels]{enumitem}
\setlist[enumerate,1]{label={\rm(\arabic*)}, ref={\rm\arabic*}} 

\usepackage{amsmath,amsfonts,amssymb,amsthm,amscd}
\usepackage[utf8]{inputenc}
\usepackage[english]{babel}

\usepackage{tabto}
\usepackage{standalone}
\usepackage{tikz}
\usetikzlibrary{matrix}
\usetikzlibrary{arrows}
\usepackage{xfrac} 
\usepackage[all]{xy} 

\usepackage{xcolor}
\usepackage{aliascnt}



\theoremstyle{plain}
\newtheorem{thm}{Theorem}[section]

\newaliascnt{lem}{thm}
\newtheorem{lem}[lem]{Lemma}
\aliascntresetthe{lem}

\newaliascnt{cor}{thm}
\newtheorem{cor}[cor]{Corollary}
\aliascntresetthe{cor}

\newaliascnt{prop}{thm}
\newtheorem{prop}[prop]{Proposition}
\aliascntresetthe{prop}

\newaliascnt{quest}{thm}

\aliascntresetthe{quest}

\theoremstyle{definition}

\newaliascnt{defn}{thm}

\aliascntresetthe{defn}

\theoremstyle{remark}

\newaliascnt{rem}{thm}
\newtheorem{rem}[rem]{Remark}
\aliascntresetthe{rem}

\newaliascnt{ex}{thm}

\aliascntresetthe{ex}



\def\cA{\ensuremath{\mathcal{A}}}

\def\cC{\ensuremath{\mathcal{C}}}

\def\cM{\ensuremath{\mathcal{M}}}

\def\cO{\ensuremath{\mathcal{O}}}

\def\cR{\ensuremath{\mathcal{R}}}




\newcommand{\lra}{\longrightarrow}

\DeclareMathOperator{\Pic}{Pic}

\DeclareMathOperator{\Sym}{Sym}
\DeclareMathOperator{\Ker}{Ker}
\DeclareMathOperator{\Id}{Id}
\DeclareMathOperator{\Hom}{Hom}
\renewcommand{\div}{\operatorname{div}}
\renewcommand{\Im}{\operatorname{Im}}
\newcommand{\Supp}{\operatorname{Supp}}


\definecolor{applegreen}{rgb}{0.55, 0.71, 0.0}

\makeatletter
\newcommand{\mylabel}[2]{#2\def\@currentlabel{#2}\label{#1}}
\makeatother

\newcommand\restr[2]{{
 \left.\kern-\nulldelimiterspace 
 #1 
 \vphantom{\big|} 
 \right|_{#2} 
}}




\YearArticle{2023} \EpigaArticleNr{9} \ReceivedOn{August 24, 2022}
\InFinalFormOn{June 29, 2023}
\AcceptedOn{July 31, 2023}

\title{The second fundamental form on the moduli space \\of cubic threefolds in $\boldsymbol{\mathcal A_5}$}
\titlemark{2nd fundamental form on cubic 3folds}

\author{Elisabetta Colombo} 
\address{Universit\`a degli Studi di Pavia, Dipartimento di Matematica, Via Cesare Saldini 50, 20133 Milano, Italy }
\email{elisabetta.colombo@unimi.it}
\author{Paola Frediani}
\address{Universit\`a degli Studi di Pavia, Dipartimento di Matematica, Via Ferrata 1, 27100 Pavia, Italy  }
\email{paola.frediani@unipv.it}
\author{Juan Carlos Naranjo}
\address{Departament de Matem\`atiques i Inform\`atica,
Universitat de Barcelona, Gran Via de les Corts Catalanes, 585, 08007 Barcelona, Spain \newline Centre de Recerca Matem\`atica, Edifici C, Campus Bellaterra, 08193 Bellaterra, Spain}
\email{jcnaranjo@ub.edu}
\author{Gian Pietro Pirola}
 \address{Universit\`a degli Studi di Pavia, Dipartimento di Matematica, Via Ferrata 1, 27100 Pavia, Italy  }
 \email{gianpietro.pirola@unipv.it}

\authormark{E.~Colombo, P.~Frediani, J.\,C.~Naranjo, G.\,P.~Pirola}

\AbstractInEnglish{We study the second fundamental form of the Siegel metric in $\mathcal A_5$ restricted to the locus of intermediate Jacobians of cubic threefolds. We prove that the image of this second fundamental form, which is known to be non-trivial, is contained in the kernel of a suitable multiplication map. Some ingredients are: the conic  bundle structure of cubic threefolds, Prym theory, Gaussian maps and Jacobian ideals.}

\MSCclass{14J70, 14J10, 14H40, 32G20}
\KeyWords{Period map, cubic threefolds, Prym varieties, second fundamental form}


\acknowledgement{E.~Colombo, P.~Frediani and G.\,P.~Pirola are members of Gnsaga (INDAM) and are partially supported by PRIN project Moduli spaces and Lie theory (2017). P.~Frediani and G.\,P.~Pirola are partially supported by MIUR: Dipartimenti di Eccellenza Program (2018-2022) - Dept.\ of Math.\ Univ.\ of Pavia. J.\,C.~Naranjo is partially supported by the Spanish MINECO research project PID2019-104047GB-I00.}


\dedication{Dedicated to Claire Voisin with great admiration}

\begin{document}


\maketitle

\begin{prelims}

\DisplayAbstractInEnglish

\bigskip

\DisplayKeyWords

\medskip

\DisplayMSCclass

\end{prelims}


\newpage

\setcounter{tocdepth}{1}

\tableofcontents


\section{Introduction}\label{sec1}

This paper deals with the local geometry of the period map of the moduli space of cubic threefolds. This is the map that associates with
a cubic its intermediate Jacobian, which turns out to be a principally polarized abelian variety (ppav from now on) of dimension $5$.   We are interested in the infinitesimal behavior of the intermediate Jacobian, which is a fundamental topic with a long tradition. In the seminal paper \cite{cg}, it was proved that the period map is an orbifold embedding and the image ${\mathcal C}$ does not intersect the Jacobian locus. In particular, global and infinitesimal Torelli theorems hold (see also \cite{carl_g} and \cite{tj}). Moreover, the deformations of the cubic threefold correspond to deformations of its Fano surface. The infinitesimal variation of the normal function attached to the Fano surface has been studied in \cite{cnp}.

Our aim here is to study the locus  $\mathcal C$ of intermediate Jacobians from a different point of view: $\mathcal C$  inherits an orbifold natural K\"ahler form $h$ from the moduli space  $\mathcal A_5$ of ppavs of dimension $5$, which is the one induced by the natural symmetric form on the Siegel space (see \cite{sat}). Following the case of the moduli space of curves,  we study the second fundamental form associated via this embedding with the Siegel form $h$.

This map is a non-holomorphic object due to the fact that the Hodge decomposition is involved in its definition, but it contains basic information about the period map. We observe that the local geometry of the image of the period map is far from being understood. It is worthwhile to recall the case of the moduli of curves, that was the model for the present research.  An important piece of information about that comes from the second Gaussian map, and there is a complete description as a multiplication map. The first part has been pursued in \cite{cpt} using a theory where Gaussian maps and Hodge theory are mixed up, while the multiplication description was provided in \cite{cfg} some years later.

In this paper we are able to develop the first part of this
program. We will use that $\mathcal C$  is contained in the  Prym locus; more precisely, each intermediate Jacobian is the Prym variety of an odd double covering of a smooth  plane quintic. Then we consider the Prym map and its relation with the Hodge-Gaussian maps to investigate the second fundamental form on $\mathcal C$.

We prove a strong unexpected symmetry on our second fundamental form that we now describe. 

We have the cotangent exact sequence
\[0 \longrightarrow {N_{\cC\slash \cA_5}^*} \longrightarrow {\Omega^1_{\cA_5}}|_{\cC} \stackrel{q}\longrightarrow \Omega^1_{\cC} \longrightarrow 0.
\]

Denote by $\nabla$ the Chern connection of the Siegel metric, and consider the second fundamental form 
\[
II_{\cC \slash \cA_5}=(q \otimes \Id) \circ \nabla|_{ {N_{\cC\slash \cA_5}^*}}\colon   {N_{\cC\slash \cA_5}^*} \longrightarrow \Sym^2 \Omega^1_{\cC}.
\]

At a point $JX$ where $X = \{F=0\}$, we have the following identifications:  ${\Omega^1_{\cA_5}}|_{\cC, JX} \cong \Sym^2R^1_F$, $\Omega^1_{\cC, JX} \cong R^2_F$, and the map $q$ is the multplication  $f\colon \Sym^2R^1_F \rightarrow R^2_F$ of classes of polynomials. Hence ${N_{\cC\slash \cA_5, JX}^*}  \cong J^2_F$, so the composition of the second fundamental form with the multiplication map of classes of polynomials is an endomorphism
\begin{equation} \label{mII}
\left(R^1_F\right)^*  \cong J^2_F \xrightarrow{II_{\cC \slash \cA_5}}
   \Sym^2R^2_F   \xrightarrow{m_F}  R^4_F \cong \left(R^1_F\right)^*.
  \end{equation}
  
Our main result is the following (see Theorem~\ref{main}).

\begin{thm} 
The composition $m_F\circ II_{\cC \slash \cA_5}$ is identically zero. Equivalently, the image of the second fundamental form $II_{\cC \slash \cA_5}$ is contained in the kernel of the multiplication map. \end{thm}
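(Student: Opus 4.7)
The strategy rests on three pillars: the realization of $JX$ as a Prym variety via the conic bundle structure of $X$, an algebraic expression for the second fundamental form $II_{\cC\slash \cA_5}$ obtained by combining Griffiths' description of the Hodge structure with a Hodge--Gaussian formula, and identities in the Jacobian ring $R_F$ forced by the Euler relation $3F=\sum_i x_i F_i$.

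\smallskip\noindent\textbf{Prym realization.} I would choose a line $\ell \subset X$ and project from $\ell$ to realize $X$ as a conic bundle over $\bP^2$, with discriminant a smooth plane quintic $C \subset \bP^2$ and associated \'etale double cover $\pi\colon \tC \to C$ in the odd component. By the classical theorem of Mumford (and Beauville), $JX \cong P(\tC/C)$ as ppav, and this isomorphism lifts to an identification of local moduli: the tangent space $T_{[JX]}\cC \cong R^3_F$ matches the tangent space to the moduli of odd admissible double covers of plane quintics at $[\tC/C]$. Dually, the $5$-dimensional conormal space $N^*_{\cC \slash \cA_5, JX} \cong J^2_F = \langle F_0, \ldots, F_4 \rangle$ is realized as a distinguished subspace of the ambient Prym conormals inside $\Omega^1_{\cA_5, [JX]}$.

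\smallskip\noindent\textbf{Algebraic $II$ via Hodge--Gaussian.} Following the Hodge--Gaussian paradigm of \cite{cpt} (developed there for $\cM_g \hookrightarrow \cA_g$), adapted to the Prym situation through the $\pm 1$-eigenspace decomposition under the involution of $\tC$, one obtains a formula for $II(\eta)(\omega_1,\omega_2) \in R^2_F$ in terms of Jacobian ring data, valid for $\eta \in J^2_F$ and $\omega_1,\omega_2 \in R^1_F \cong H^{2,1}(X)$. Via the Griffiths isomorphisms $H^{2,1}(X) \cong R^1_F$ and $H^{1,2}(X) \cong R^4_F$, together with the codifferential of the period map realized as the multiplication $\Sym^2 R^1_F \to R^2_F$, this formula becomes expressible purely through multiplications in $R^\bullet_F$ and contractions against the partials $F_i$.

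\smallskip\noindent\textbf{Algebraic vanishing and principal obstacle.} By Macaulay duality $R^4_F \cong (R^1_F)^\vee$ via the top pairing $R^4_F \otimes R^1_F \to R^5_F \cong \bC$, the vanishing $m_F \circ II_{\cC \slash \cA_5} \equiv 0$ is equivalent to
\[
\langle m_F(II(\eta)),\, \omega \rangle_{R^5_F} = 0 \quad \text{for all } \eta \in J^2_F,\ \omega \in R^1_F.
\]
Writing $\eta = \sum_i c_i F_i$ and substituting into the $II$-formula from the previous step, each summand simplifies, after invoking Euler's identity $F \equiv 0 \pmod{J_F}$ and Leibniz relations for derivatives of products, to a multiple of $F$ in $R^\bullet_F$, which is zero. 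I expect the principal obstacle to lie in the algebraic description of $II$: matching the conic-bundle/Prym isomorphism with a Hodge--Gaussian formula requires tracking how Gaussian maps on $C$ (involving the conormal sheaf of $C$ in $\bP^2$ and twists by $\pi_* \cO_{\tC}$) translate into multiplications inside $R^\bullet_F$, and how to separate the contribution of $\cC \hookrightarrow \cP$ from that of $\cP \hookrightarrow \cA_5$, where $\cP$ denotes the relevant Prym locus. Once this formula is in place, the algebraic vanishing reflects the structural fact that the conormal directions $J^2_F$ parametrize precisely the degenerations of the multiplication $\Sym^2 R^1_F \to R^2_F$, and this degeneration propagates to an annihilation of $m_F \circ II$ inside the Artinian Gorenstein ring $R_F$.
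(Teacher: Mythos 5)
Your overall framing---Prym realization via the conic bundle, Hodge--Gaussian maps as an algebraic handle on $II$, and Macaulay duality to reduce to a pairing statement---matches the paper's setup. But the heart of your argument, the claim that after substituting $\eta=\sum_i c_iF_i$ into a Jacobian-ring formula for $II$ ``each summand simplifies, after invoking Euler's identity \dots to a multiple of $F$,'' is a genuine gap, and the structure of the paper's proof strongly suggests no such formal computation exists. The second fundamental form is not a holomorphic object computable from $R^\bullet_F$ alone: the Hodge--Gaussian lift $\rho$ of $II|_{I_2(\omega_{Q_l}\otimes\alpha_l)}$ involves the flat metric on $\alpha_l$ and harmonic decompositions, and only its composition with a multiplication map (giving the second Gaussian map $\mu_2$) is algebraic. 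Even after the paper establishes the identity $m_F\circ(II)|_{I_2(\omega_{Q_l}\otimes\alpha_l)}=\tilde\tau\circ\mu_2$ (which itself requires a nontrivial lifting $\tilde\tau\colon H^0(Q_l,\omega_{Q_l}^{\otimes 4})\to R^4_F$ built from Koszul cohomology on the Del Pezzo surface $S$), the vanishing of $\tau\circ\mu_2$ in $R^8_{Q_l}$ only forces $\tilde\tau\circ\mu_2$ to land in the $2$-dimensional kernel of $h\colon R^4_F\to R^8_{Q_l}$, not to vanish. A formal Euler-identity argument cannot see past this point.

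What the paper actually does at this stage is qualitatively different from anything in your sketch: it first proves that $m_F\circ II_{\cC\slash\cA_5}$ is a \emph{multiple of the identity} on $J^2_F\cong(R^1_F)^*$, by varying the line $l$ and observing that the $2$-dimensional subspaces $I_2(\omega_{Q_l}\otimes\alpha_l)$ are all invariant, so that polars of points of $X$ (and then of all of $\bP^4$) are eigenvectors. Pinning down that the scalar is zero then requires a genuinely geometric degeneration: one restricts to cubics admitting a point whose polar quadric has rank $3$, computes $\mu_2(\Gamma_1)$ explicitly on the associated $10$-dimensional family of plane quintics, shows $\tilde\tau(\mu_2(\Gamma_1))=0$ there, and invokes a dominance theorem for the restricted Prym map to spread the conclusion over all of $\cC$. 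Your proposal contains no mechanism that could distinguish ``multiple of the identity'' from ``zero,'' which is precisely where the difficulty of the theorem is concentrated; the obstacle you flag (writing $II$ algebraically) is real, but it is not the only one, and the Euler relation alone does not close the argument.
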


Notice that we have an unexpected symmetry, which deserves further investigation: namely, the second fundamental form at a point $JX$ is a map between two kernels of multiplication maps: 
$$
\Ker\left(f\colon \Sym^2R^1_F \lra R^2_F\right) \lra \Ker\left(m_F\colon \Sym^2R^2_F \lra R^4_F\right).
$$

We recall that the monodromy group of the family of cubic threefolds tensored with ${\mathbb Q}$ is the whole symplectic group (see \cite[Theorem 4]{beauville_mono}, see also \cite{huybrechts_notes}). This, in particular,  implies that  the second fundamental form $II_{\cC \slash \cA_5}$ is different from zero. 
Another way to see this is by observing that the special Mumford--Tate group for the general cubic threefold is the whole symplectic group. Hence the closure $\overline{\mathcal C}$ of~${\mathcal C}$ in ${\mathcal A}_5$ is not a special subvariety. Since ${\mathcal C}$ contains $CM$ points, by a result of Moonen \cite{moonen}, $\overline{\mathcal C}$ is not totally geodesic (see Remark~\ref{totgeo}). 

The proof proceeds by a series of steps,  some of them of possible independent interest. In the spirit of Mumford and Beauville, we take a line $l$ of the Fano variety, and we consider the conic bundle structure induced on $X$ by $l$. The discriminant curve is a plane quintic $Q_l$ naturally endowed with an odd non-trivial $2$-torsion line bundle $\alpha_l$, which means that $h^0(Q_l, {\mathcal O}_{Q_l}(1) \otimes \alpha_l) =1$. So a non-trivial section $s \in H^0(Q_l, {\mathcal O}_{Q_l}(1) \otimes \alpha_l)$ has a zero divisor $D$ such that $2D$ is the intersection of a conic $C$ with $Q_l$. Then the intermediate Jacobian of $X$ is isomorphic as ppav to the Prym variety $P(Q_l,\alpha_l)$. If $l$ is generic, a natural Del Pezzo surface $S$ appears in the picture, which is the complete intersection of the polar quadrics of the points of $l$. These quadrics give a $2$-dimensional vector subspace $I_2(\omega_{Q_l}\otimes \alpha_l)\subset J^2_F$, which can be identified with the set of quadrics containing the Prym canonical image of $Q_l$.

One of the key points in the proof is to show that the Hodge-Gaussian map introduced in \cite{cpt} gives a lifting of the restriction of the second fundamental form to $I_2(\omega_{Q_l}\otimes \alpha_l)$ (see Theorem~\ref{rho}). Since the composition of this map with a suitable multiplication map is the second Gaussian map $\mu_2$ of $\omega_{Q_l}\otimes \alpha_l$ on the plane quintic $Q_l$, we focus on the study of this map. We prove that the image is contained in the Jacobian ideal of the quintic (see Section~\ref{sec4} for precise statements). This result allows us to prove that the composition in \eqref{mII} is a multiple of the identity (see Theorem~\ref{identity}).

Next, we use the surface $S$ to connect the study of the Jacobian rings of $X$ and $Q_l$. This is done in Section~\ref{sec5}, where a further lifting of the second Gaussian map $\mu_2$ to $R^4_F$ is constructed, so that we get a map $I_2(\omega_{Q_l} \otimes \alpha_l) \rightarrow R^4_F$, which is the composition of the map $\mu_2\colon I_2(\omega_{Q_l} \otimes \alpha_l) \rightarrow H^0(Q_l, \omega_{Q_l}^{\otimes 4})$ with a map $\tilde{\tau}\colon H^0(Q_l, \omega_{Q_l}^{\otimes 4}) \rightarrow R^4_F$. 

We are able to make explicit computations, showing that the map $\tilde{\tau}$ vanishes, when the $2$-torsion point $\alpha_l$ on the quintic is of the form $2x-y-z$ for some points $x,y,z\in Q_l$ and $Q_l$ is smooth. In other words, the conic $C$ is the union of two lines $l_1$ and $l_2$ such that they intersect at a point in $Q_l$ and $l_1$ is tangent at order~$4$ at another point. In Proposition~\ref{rank3} we prove that this is equivalent to the geometric property that there is a point in $l$ such that its polar quadric has rank $3$. 
Then we show that there exists a non-empty open subset of the moduli space ${\mathcal C}$ of cubic threefolds $X$ such that $X$ contains a point whose polar quadric is of rank $3$ and a line $l$ containing this point, with the quintic $Q_l$ smooth (see Remark~\ref{adler+}). This can be seen as a complement of the results of Adler \cite{ar} on the singularities of the Hessian of $X$. 

Finally, we exhibit explicit constructions of such polar quadrics of rank $3$, where we are able to compute the second Gaussian map $\mu_2$ and show that the above lifting $\tilde{\tau} $ is identically zero. 

This, together with the fact that the composition $m_F \circ II_{\cC \slash \cA_5}$ in \eqref{mII} is a multiple of the identity, will allow us to conclude that $  m_F \circ II_{\cC \slash \cA_5}$ is zero.

The structure of the paper is as follows: 
In Section~\ref{sec2} we set the notation, and we recall the classical known results on the geometry of cubic threefolds. In Section~\ref{sec3} we analyze the relation between the period map associating with a cubic threefold $X$ its intermediate Jacobian and the Prym map of the corresponding plane quintics $Q_l$, when $l$ varies in the Fano surface of $X$. In Section~\ref{sec4} we recall the definition of Gaussian maps, we describe the second Gaussian map $\mu_2$ of an odd Prym canonical line bundle $\omega_Q \otimes \alpha$ on a smooth plane quintic $Q$, and we show that its image is contained in the Jacobian ideal of $Q$ (see Proposition~\ref{mu2} for a precise statement). In Section~\ref{sec5} we prove that the second Gaussian map $\mu_2$ can be lifted to a map $I_2(\omega_Q \otimes \alpha) \rightarrow R^4_F$, where $X = \{F=0\}$ (Proposition~\ref{lift}). The proof of Proposition~\ref{lift} is based on the description of the relation between the Jacobian ideals of $X$ and $Q$ and uses the Del Pezzo surface $S$. In Section~\ref{sec6} we study the second fundamental form of the moduli space of cubic threefolds embedded in ${\mathcal A}_5$ via the period map. We show that the Hodge-Gaussian map $\rho$ gives a lifting to its restriction to $I_2(\omega_Q \otimes \alpha)$; hence when we vary the line $l$, the Hodge-Gaussian maps determine the second fundamental form (Theorem~\ref{rho} and Remark~\ref{rho+}). Finally, we show that the composition of the second fundamental form with the multiplication map $m_F \colon \Sym^2 R^2_F \rightarrow R^4_F$ is a multiple of the identity (Theorem~\ref{identity}). In Section~\ref{sec7} we recall some results of Adler on polar quadrics to~$X$ of rank at most $3$, and we show that there exists a non-empty Zariski open subset of the moduli space of cubic threefolds $X$ such that $X$ contains a point whose polar quadric is of rank $3$ and a line $l$ containing this point such that the corresponding quintic $Q_l$ is smooth (Theorem~\ref{dominant} and Remark~\ref{adler+}). In Section~\ref{sec8} we prove the main theorem (Theorem~\ref{main}) showing that the composition of the second fundamental form with the multiplication map is identically zero.

\subsection*{Acknowledgments}
We thank Alessandro Ghigi for a very useful conversation about the monodromy of totally geodesic subvarieties. We also thank Andr\'es Rojas for pointing out the work of Adler on cubic threefolds \cite{ar}. Finally, we thank the anonymous referee for the  useful comments and suggestions to improve the paper.

\section{Notation and preliminaries}\label{sec2}

In this paper $X\subset \mathbb P^4$ will be a smooth cubic threefold given by the set of zeroes of the reduced polynomial $F\in \mathbb C[x_0,\ldots ,x_4]$. The Fano surface of lines in $X$ will be denoted by $F(X)$. The polar variety of a point $p=[a_0:\dots :a_4]$ with respect to $X$ is the quadric
\[
\Gamma_p(X)=V\left(a_0 \frac{\partial F}{\partial x_0}+\dots +
a_4 \frac{\partial F}{\partial x_4}\right).
\]
The same definition works for the polar variety of a point $p\in \mathbb P^n$ with respect to any smooth hypersurface, in particular for plane curves.

A line $l\in F(X)$ is said to be special if there exists a $2$-plane $\Pi$ containing $l$ such that the residual conic is a double line. In other words, $\Pi \cdot X=l+2r$ for some line $r\subset X$. Otherwise, the line is said to be non-special. The special lines  define a curve on $F(X)$. Notice that this is not the definition of ``lines of second type'' (see \cite{cg}); with our notation, $r$ is a line of second type.

\subsection*{Prym theory}
We define the moduli space $\cR_g$ of the isomorphism classes of pairs $(C,\alpha)$, where $C$ is a smooth irreducible curve of genus $g$ and $\alpha$ is a non-trivial $2$-torsion line bundle on $C$. Denote by $\pi\colon \cR_g \rightarrow \cM_g$ the forgetful map. This is a finite morphism of degree $2^{2g}-1$.

There is an unramified irreducible covering $\pi\colon\widetilde C \rightarrow C$ attached to the data $(C,\alpha)$. The Prym variety of $(C,\alpha)$ is defined as the component of the origin of the kernel of the norm map: $P(C,\alpha):=\Ker(Nm_{\pi})^0$. This is an abelian variety of dimension $g-1$, and the polarization in $J\widetilde C$ induces on  $P(C,\alpha)$ twice a principal polarization. Hence a map $P_g\colon\mathcal R_g \rightarrow \mathcal A_{g-1}$ is well defined (see \cite{mu} for more details). 

We are mainly concerned with the genus $6$ case. In particular, we consider plane curves of degree $5$. We denote by $\mathcal Q \subset\cM_6$ the moduli space of smooth quintic plane curves as a subspace of the moduli space of curves. Then the preimage $\pi^ {-1}(\mathcal{Q})$ breaks into two irreducible components:
$$
\begin{aligned}
&
\mathcal R\mathcal Q^+:=\{(Q,\alpha)\in \cR_6 \mid Q\in \mathcal Q\text{ and } h^0(Q,\cO_{Q}(1)\otimes \alpha)\text{ even} \},
\\
&
\mathcal R\mathcal Q^-:=\{(Q,\alpha)\in \cR_6 \mid Q\in \mathcal Q\text{ and } h^0(Q,\cO_{Q}(1)\otimes \alpha)\text{ odd} \}.
\end{aligned}
$$
It is well known that the Prym map applies birationally $\cR\mathcal Q^+$ to the Jacobian locus in $\cA_5$.

\subsection*{Conic bundle structures on $\boldsymbol{X}$}
All the results can be found in \cite{cg}, \cite{be_jacob} and, especially, \cite{donsmith}.
Given a line $l\in F(X)$, we denote by $X_l$ the blow-up of $X$ at $l$. The projection $X\setminus\{l\} \rightarrow \mathbb P^2$ into a $2$-plane disjoint from $l$ extends to $X_l$. Since all the fibers are conics, this is called a conic bundle structure on $X$. The discriminant curve $Q_l\subset \mathbb P^2$ is a quintic parametrizing the singular conics. If the line is non-special, the quintic $Q_l$ is smooth. We assume this from now on. Moreover, the curve in the Fano surface $
\widetilde Q_l :=(\text{closure of})\, \{r\in F(X)\setminus \{l\} \mid r \cap l \neq \emptyset \}
$
is an irreducible curve of genus $11$ with a natural involution such that the quotient by this involution is the quintic~$Q_l$. This unramified covering $\widetilde{Q_l}\rightarrow Q_l$ is determined by a non-trivial $2$-torsion point $\alpha_l\in JQ_l[2]\setminus \{0\}$.  

We collect some well-known results and we fix some notation. 

\begin{prop}  \label{basic_properties}\leavevmode
\begin{enumerate}
    \item \label{bp-1} The $2$-torsion point $\alpha_l $ is odd; that is, $h^0(Q_l,\cO_{Q_l}(1)\otimes \alpha_l )=1$. In particular, $(Q_l,\alpha_l)\in \cR\mathcal{Q}^-$.
    \item \label{bp-2} The Prym variety $P(Q_l,\alpha_l)$ is isomorphic $($as ppav$)$ to the intermediate Jacobian $JX$.
    \item \label{bp-3} The fiber of the Prym map at $JX$ is exactly the open set of $F(X)$ given by the non-special lines; hence it is the complement of a curve.
    \item \label{bp-4} The map $\psi\colon Q_l \rightarrow X\subset \mathbb P^4$ sending a point $p\in Q_l$  to the double point of the corresponding degenerate conic is the map attached to the line bundle $\omega_{Q_l}\otimes \alpha_l$.
    \item \label{bp-5} Let $s\in H^0(Q_l,\cO_{Q_l}(1)\otimes \alpha _l )\cong \mathbb C$ be a non-trivial section, and put $(s)_0=:D=p_1 +\dots+p_5$,
      where the $p_i$ are not necessarily distinct. Nevertheless, for a generic line $l$, $D$ is reduced. Observe that $2D \in |\omega_{Q_l}|$, so there exists a unique conic $C$ such that $C \cdot Q_l = 2D$.  Since $\alpha_l$ is not trivial, the conic $C$ is reduced. Then  $l\cdot \psi (Q_l)=\psi(p_1) +\cdots + \psi(p_5)$. 
    Notice that the points $\psi(p_i)$ are the points $x$ in $l$ such that there is a plane $\Pi$ containing $l$ and such that $\Pi \cdot X$ is formed by three different lines passing through $x$. We call these points ``triple'' points of $X$.
    \item \label{bp-6} Denote by $N_D \subset H^0({\mathbb P}^2, \mathcal O_{{\mathbb P}^2}(3))$ the subspace of plane cubics osculating the conic $C$ to a given order in the points in $\Supp(D)$. In the case where $D$ is reduced, it is simply  the space of plane cubics containing the five points $p_i$.  Let $\varphi\colon \mathbb P ^2 \dasharrow \mathbb P^4$ be the map induced by $|N_D|$; then $\psi $ is the restriction to $Q_l$ of $\varphi$. 
    
     Indeed, assume that $D$ is reduced; then the diagram
    $$
\xymatrix@R=0.8cm@C=0.8cm{
             & &  0 \ar[d] & 0 \ar[d] & \\
    0 \ar[r]& \cO_{\mathbb P^2}(-2) \ar[d]^{=} \ar[r] & \mathcal I_{D\slash \mathbb P^2}(3)\ar[d] \ar[r] & \mathcal O_{Q_l}(3)(-D)\cong \omega_{Q_l}\otimes  \alpha_l \ar[d]\ar[r]& 0\\
    0 \ar[r]& \cO_{\mathbb P^2}(-2) \ar[r]^{\cdot Q_l} & \mathcal O_{\mathbb P^2}(3)\ar[d] \ar[r] & \mathcal O_{Q_l}(3) \ar[d]\ar[r]& 0\\
     && \mathcal O_D(3) \ar[r]^{=} \ar[d] & \mathcal O_D(3) \ar[d] &
    \\
    &&0&0&
    }           
$$
    induces  an isomorphism $H^0(\mathbb P^2,\mathcal I_{D\slash \mathbb P^2}(3))\cong H^0(Q_l,\omega_{Q_l}\otimes \alpha_l) $.
  
    \item \label{bp-7} The vector space $I_2(\omega_{Q_l}\otimes \alpha_l)$ of the equations of the quadrics containing $\psi (Q_l)$ has dimension $2$. Such quadrics are the polar varieties with respect to $X$ of the points in the line $l$. 
    \item\label{bp-8} The closure of the image of the map $\varphi$ is a surface $\bar{S}$, which is the complete intersection of two independent quadrics in $I_2(\omega_{Q_l}\otimes \alpha_l)$. In the case where $D$ is reduced,  $\bar{S}$ is the Del Pezzo surface given by the blow-up of ${\mathbb P}^2$ at the points $p_i$. If $D$ is not reduced,  
  then  the blow-up $S$ of the points and of the infinitely near points gives the minimal resolution of  the map $\varphi$: 
\begin{equation*}
\xymatrix@R=0.8cm@C=0.8cm{
   S \ar[d]^{\varepsilon}  \ar[dr]^{\overline {\varphi}}& \\
   \mathbb P^2 \ar@{-->}[r]^{\varphi } & \overline{S}\subset \mathbb P^4. 
   }           
\end{equation*}
\end{enumerate}
\end{prop}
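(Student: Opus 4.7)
The plan is to treat Proposition~\ref{basic_properties} as a synthesis of classical results going back to Clemens--Griffiths \cite{cg}, Mumford \cite{mu}, Beauville \cite{be_jacob}, and Donagi--Smith \cite{donsmith}. The geometric engine is the conic bundle $\pi_l\colon X_l \to \mathbb P^2$ obtained by projecting $X$ off the line $l$: the generic fiber is a smooth conic, and the discriminant locus is a plane quintic $Q_l$, smooth when $l$ is non-special. The étale double cover $\widetilde Q_l \to Q_l$ parameterizes a choice of one of the two lines in each reducible fiber and determines the nontrivial $2$-torsion line bundle $\alpha_l$ on $Q_l$. With this set-up in hand each item becomes a compact check.

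For (2), the Mumford--Beauville analysis of the Prym attached to the discriminant cover of a conic bundle over $\mathbb P^2$ yields $JX \cong P(Q_l,\alpha_l)$ as principally polarized abelian varieties. For (3), the fiber of the Prym map at $JX$ contains $F(X)$ by construction, and matching dimensions forces equality on the open set of non-special lines; the complement is precisely the curve of lines $r$ for which $Q_r$ is singular. For (1), since $\omega_{Q_l}\cong \mathcal O_{Q_l}(2)$ by adjunction, the line bundle $\mathcal O_{Q_l}(1)\otimes \alpha_l$ is a theta characteristic whose parity is a deformation invariant; exhibiting the section $s$ of (5) directly gives $h^0 \ge 1$, and for a generic non-special line a degree count on $\psi(Q_l)\subset \mathbb P^4$ shows $h^0 = 1$, so $(Q_l,\alpha_l) \in \mathcal R\mathcal Q^-$ generically, hence everywhere on $F(X)$ minus the special locus.

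For (4)--(6), I would define $\psi$ by sending $p \in Q_l$ to the singular point of the degenerate conic $\pi_l^{-1}(p)$; a hyperplane section of $\psi(Q_l)$ records the singular conics in a pencil, so $\deg \psi^*\mathcal O_{\mathbb P^4}(1) = 10$ and a matching multiplicity computation identifies $\psi$ with the morphism attached to $\omega_{Q_l}\otimes \alpha_l$. The description of $D=(s)_0$ in (5) follows because the points of $l \cap \psi(Q_l)$ are exactly the triple points of $X$ on $l$, giving a reduced divisor for generic $l$. Item (6) is the diagram chase already exhibited in the statement: the snake lemma applied to the top two rows produces the identification between cubics through $D$ and sections of $\omega_{Q_l}\otimes \alpha_l$ via $H^1(\mathbb P^2,\mathcal O(-2))=0$.

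Items (7)--(8) are the technical heart and where I expect the main obstacle. For any $p \in l$, the polar quadric $\Gamma_p(X)$ vanishes at every singular point of every conic fiber of $\pi_l$ (by the chain rule, applied to the equation of $X$ written in coordinates adapted to $l$), hence $\Gamma_p(X) \supset \psi(Q_l)$; this produces an injection of the $2$-dimensional pencil of polar quadrics of points of $l$ into $I_2(\omega_{Q_l}\otimes \alpha_l)$. The delicate step is the reverse bound $\dim I_2 \le 2$, which I would establish via Koszul or base-point-free arguments on the Prym-canonical embedding (a curve of genus $6$ and degree $10$ in $\mathbb P^4$), arguing that further quadrics through $\psi(Q_l)$ would force $\psi(Q_l)$ to lie on a surface of too small a degree. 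Item (8) then follows: the two independent polar quadrics cut out a degree-$4$ complete intersection surface $\overline S\subset \mathbb P^4$ containing $\psi(Q_l)$, and the rational map from $\mathbb P^2$ defined by $|N_D|$ factors through it. The main difficulty I anticipate is the non-reduced case of $D$, where one must blow up not only the support of $D$ but also the infinitely near points dictated by the osculation with $C$, and verify that the resulting $S \to \overline S$ is birational and realizes $\overline S$ as a degree-$4$ Del Pezzo surface.
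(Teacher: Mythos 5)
The paper offers no proof of this proposition: it is explicitly presented as a collection of well-known results, with the reader referred to Clemens--Griffiths, Beauville and especially Donagi--Smith, and your synthesis follows exactly that classical route (conic bundle obtained by projecting off $l$, discriminant quintic $Q_l$ with its \'etale double cover defining $\alpha_l$, the Mumford--Beauville identification $JX\cong P(Q_l,\alpha_l)$, polar quadrics of points of $l$ containing the Prym-canonical image and cutting out the degree-$4$ Del Pezzo surface). Your sketch is consistent with those sources and correctly isolates the genuinely delicate points (the bound $\dim I_2(\omega_{Q_l}\otimes\alpha_l)\le 2$ and the non-reduced case of $D$); the one place it understates the work is item~(3), where ``matching dimensions forces equality'' shows only that the non-special lines form a $2$-dimensional subset of the Prym fiber and does not by itself exclude further components --- that exclusion is the substance of the Donagi--Smith theorem being cited --- but since the paper itself proves none of this, the proposal is an acceptable reconstruction of the same approach.
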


\subsection*{Jacobian rings}
Let $F(x_0,\ldots ,x_n)=0$ be the equation of an irreducible reduced hypersurface $X$ of degree $d$ in $\mathbb P^n$. We use the following notation:
\[
\begin{aligned}
& S=\mathbb C[x_0,\ldots ,x_n],\qquad S^d=\mathbb C[x_0,\ldots ,x_n]_d, \\
& J_F=\left<\frac{\partial F}{\partial x_0}, \ldots ,\frac{\partial F}{\partial x_n}\right>\subset S \quad \text{(the Jacobian ideal),} \\
& S/J_F=R_F=\oplus_{i\ge 0} R_F^i \quad \text{(the Jacobian ring)}.
\end{aligned}
\]
Observe that $R_F^i=S^i$ for $i\le d-2$. Moreover, $R_F^{d}$ is the subspace of infinitesimal deformations (therefore, in $H^1(X,T_X)$) which deform $F$ as a hypersurface (attaching to a class of a degree $d$ polynomial $G$ the infinitesimal deformation $F+\varepsilon G, \varepsilon ^2=0$). 

According to Macaulay's theorem (see for example \cite{vo_book}), assuming that $X=V(F)$ is smooth, we have 
\[
R^i_F \neq 0 \iff 0\le i \le N=(d-2)(n+1);
\]
moreover, $\dim_{\mathbb C}R_F^N=1$, and the product of classes of polynomials provides a duality $ R_F^i\cong (R_F^{N-i})^*$. 
We will use these rings in two  cases: the cubic threefold $X=V(F)$ in $\mathbb P^4$ and the quintic plane curve $V(Q_l)$. In the first case, $N=5$, and $R_F^3$ is the space of  infinitesimal deformations as cubic threefold. In the second case, $N=9$, and $R_{Q_l}^5$ is the space of infinitesimal deformations as a plane curve.

\section{Cubic threefolds and quintic plane curves}\label{sec3}

Let $\mathcal C\subset \mathcal A_5$ be the locus of the intermediate Jacobians of smooth cubic threefolds in $\mathbb P^4$. By the Torelli theorem, this space can be identified with the moduli space of cubic threefolds. As recalled in Proposition~\ref{basic_properties}\eqref{bp-2}, the restriction of the Prym map gives a surjection $\mathcal {RQ}^- \rightarrow \mathcal C$. For any $JX\in \mathcal C$, the fiber of the Prym map is an open set of the Fano surface $F(X)$ of $X$.

We give some diagrams involving the differential of $P_6$ and of its restriction $P$ to $\mathcal {RQ}^-$.   We set $T_F := \Ker (dP\colon T_{\mathcal {RQ}^-} \rightarrow P^*T_{\mathcal C})$. Then we have the following diagrams of (orbifold) vector bundles over $\mathcal {RQ}^-$:
\begin{equation}\label{diagrama1}
\xymatrix@R=0.7cm@C=0.7cm{
              &  0 \ar[d] & 0 \ar[d] && \\
    &  T_F \ar@{=}[r] \ar[d] & T_F \ar[d] & 0\ar[d]& \\
    0 \ar[r] & T_{\mathcal {RQ}^-} \ar[r] \ar[d] & {T_{\cR_6}}|_{\mathcal {RQ}^-} \ar[d] \ar[r] & N_{\mathcal {RQ}^-\slash \cR_6} \ar[r] \ar[d]& 0
    \\
    0 \ar[r] & P^*T_{\cC} \ar[r] \ar[d]  & P^*{T_{\cA_5}}|_{\cC} \ar[d] \ar[r] & P^*{N_{\cC\slash \cA_5}} \ar[r] \ar[d]& 0 
    \\
    &0 & \mathcal V \ar@{=}[r] \ar[d] & \mathcal V \ar[d] &
    \\
    &&0&0\rlap{.}&&
    }           
\end{equation}

We consider a generic smooth cubic threefold and a non-special line $l\in F(X)$. Then, $(\widetilde Q_l,Q_l) \in \mathcal {RQ}^-$. We take the stalks at $l$ in diagram \eqref{diagrama1}, and we dualize.
Using the identifications
\[
T_{\cA_5\vert \cC , JX}^*=\Sym^2H^0\left(Q_l,\omega_{Q_l}\otimes \alpha_l\right), \qquad
T_{\cR_6,l}^*=H^0\left(Q_l,\omega_{Q_l}^{\otimes 2}\right),    
\]
we get a diagram of vector spaces as follows:
 \begin{equation}\label{diagrama2}
\xymatrix@R=0.7cm@C=0.7cm{
              &  0 \ar[d] & 0 \ar[d] && \\
    &  \mathcal V_l^* \ar@{=}[r] \ar[d] & \mathcal{V}_l^* \ar[d] & 0\ar[d]& \\
      0 \ar[r] &        N_{\cC\slash \cA_5, JX}^* \ar[r] \ar[d] & \Sym^2H^0\left(Q_l,\omega_{Q_l}\otimes \alpha_l\right)  \ar[d]^{m_{\alpha }} \ar[r]^{\hspace{10mm}f} & T_{\cC, JX}^* \ar[r] \ar[d]& 0
    \\
   0 \ar[r] & N_{\mathcal {RQ}^-\slash \cR_6,l}^* \ar[r] \ar[d]  & H^0\left(
   Q_l,\omega_{Q_l}^{\otimes 2}\right)        \ar[d] \ar[r] &  T_{\mathcal {RQ}^-,l}^*   \ar[r] \ar[d]& 0 
        \\
    &0 & T_{F(X),l}^* \ar@{=}[r] \ar[d] & T_{F(X),l}^* \ar[d] &
    \\
    &&0&0\rlap{.}&&
    }      
\end{equation}

Notice that $\mathcal V_l^*$ can be seen as the $2$-dimensional vector space of the quadrics in $\mathbb P(H^0(Q_l,\omega_{Q_l}\otimes \alpha_l))^*$ that contain the image of the semicanonical map $\psi$ (see Proposition~\ref{basic_properties}\eqref{bp-6} and~\eqref{bp-7}). Therefore, 
$\mathcal V_l^*=I_2(\omega_{Q_l}\otimes \alpha_l)$. 

Remember that
\[
 R_F^1=S^1=H^0\left(Q_l,\omega_{Q_l}\otimes \alpha_l\right), \quad 
T_{\cC ,JX}^*=\left(R_F^3\right)^*=R_F^2.
\]
Hence the map $f$ in the second row  is the multiplication map $\Sym^2R^1_F \rightarrow R^2_F$, and then  
\[
N_{\cC\slash \cA_5, JX}^*=J^2_F.
\]
Also $H^0(Q_l,\omega_{Q_l}^{\otimes 2})=H^0(\mathbb P^2, \cO_{\mathbb P^2}(4))=S^4$ and $T_{\mathcal {RQ}^-,l}^*=T_{\mathcal Q, Q_l}^*=(R^5_{Q_l})^*=R^4_{Q_l}$. Therefore, 
\[
N_{\mathcal {RQ}^- \slash \cR_6,l}^*=J^4_{Q_l}.
\]

With the above identifications, diagram \eqref{diagrama2} becomes 
\begin{equation}\label{diagramma3}
\xymatrix@R=0.7cm@C=0.7cm{
              &  0 \ar[d] & 0 \ar[d] && \\
    &  I_2\left(\omega_{Q_l}\otimes \alpha_l\right) \ar@{=}[r] \ar[d] & I_2\left(\omega_{Q_l}\otimes \alpha_l\right) \ar[d] & 0\ar[d]& \\
      0 \ar[r] &        J^2_F \ar[r] \ar[d] & \Sym^2H^0\left(Q_l,\omega_{Q_l}\otimes \alpha_l\right)\cong H^0\left({\mathbb{P}}^4, {\mathcal O}_{{\mathbb P}^4}(2)\right)  \ar[d]^{m_{\alpha}} \ar[r]^{\hspace{30mm} f} & R^2_F \ar[r] \ar[d]& 0
    \\
   0 \ar[r] & J^4_{Q_l} \ar[r] \ar[d]  & H^0\left(Q_l,\omega_{Q_l}^{\otimes 2}\right) \cong  H^0\left({\mathbb{P}}^2, {\mathcal O}_{{\mathbb P}^2}(4)\right)      \ar[d] \ar[r]^{\hspace{30mm} f'} &  R^4_{Q_l}   \ar[r] \ar[d]& 0 
        \\
    &0 & T_{F(X),l}^* \ar@{=}[r] \ar[d] & T_{F(X),l}^* \ar[d] &
    \\
    &&0&0\rlap{.}&&
    }      
\end{equation}

We consider the natural isomorphisms
\[
\left(R^1_F\right)^ *\lra J^ 2_F, \qquad \left(R^ 1_{Q_l}\right) ^*\lra J^ 4_{Q_l}
\]
 given by the polarity; that is, a vector $w\in (R_F^1)^*$ with coordinates $(a,b,c,d,e)$ (in other words, representing the point $[a:b:c:d:e]\in \mathbb P (R^1_F)^*=\mathbb P^4$) is sent to
 \[
 aF_{x_0}+bF_{x_1}+cF_{x_2}+dF_u+eF_v,
 \]
 and similarly for $Q_l$.

Then the dual of the exact sequence in the first column of diagram \eqref{diagramma3} has a natural interpretation, as follows. 

\begin{lem}\label{lemma_s}
Let $s\in H^0(Q_l,\cO_{Q_l}(1)\otimes \alpha_l)$ be a non-trivial section; then multiplication by $s$ gives a short exact sequence
\[
0\lra R^1_{Q_l}=H^0\left(Q_l,\cO_{Q_l}(1)\right) \xrightarrow{\;\cdot s\;}
R^1_F=H^0\left(Q_l,\omega_{Q_l}\otimes \alpha_l\right)
\lra
I_2(\omega_{Q_l}\otimes \alpha_l)^* \lra 0.
\]
\end{lem}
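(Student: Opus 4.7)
The plan is to identify, via the Prym canonical embedding $\psi\colon Q_l \to \mathbb{P}^4$, the image of $\cdot s$ inside $R^1_F$ with the space of linear forms on $\mathbb{P}^4$ vanishing on $l$, and then to recognize the resulting two-dimensional quotient as $I_2(\omega_{Q_l}\otimes\alpha_l)^*$. Injectivity of $\cdot s$ is immediate from the integrality of $Q_l$ together with $s\neq 0$, so all of the content lies in the image description and the cokernel identification.

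First I would invoke the natural identification $R^1_F=S^1\cong H^0(\mathbb{P}^4,\mathcal{O}_{\mathbb{P}^4}(1))$ matching a linear form $\tilde\ell$ with its Prym-canonical pullback $\psi^*\tilde\ell$. Given $t\in H^0(\mathcal{O}_{Q_l}(1))$, let $\widetilde{ts}$ be the linear form corresponding to $ts\in R^1_F$, and set $H=\{\widetilde{ts}=0\}\subset\mathbb{P}^4$. Since $\psi^*H=(ts)_0=D+(t)_0$ contains $D$, the image subscheme $\psi(D)\subset\mathbb{P}^4$ is contained in $H$; by Proposition~\ref{basic_properties}\eqref{bp-5}, $\psi(D)=l\cdot\psi(Q_l)$ is a length-$5$ subscheme of $l$. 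If $l$ were not contained in $H$, the intersection $l\cap H$ would be a single reduced point, which cannot contain a length-$5$ subscheme; hence $l\subset H$. Therefore the image of $\cdot s$ sits inside the $3$-dimensional subspace of linear forms vanishing on $l$, and by injectivity and a dimension count it fills it. This length-based argument is the main delicate step; formulating it with subschemes rather than with individual points makes it work uniformly whether or not $D$ is reduced, so no case analysis on multiplicities is needed.

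Finally I would identify the cokernel: $H^0(\mathbb{P}^4,\mathcal{O}(1))$ modulo linear forms vanishing on $l$ is canonically $H^0(l,\mathcal{O}_l(1))$. By Proposition~\ref{basic_properties}\eqref{bp-7}, the polarization assignment $p\mapsto\Gamma_p(X)$ defines a linear map from the affine cone $H^0(l,\mathcal{O}_l(1))^*$ over $l$ into $I_2(\omega_{Q_l}\otimes\alpha_l)$, which is surjective by the cited item and hence an isomorphism, since both spaces have dimension~$2$. Transposing yields the desired natural isomorphism of the cokernel with $I_2(\omega_{Q_l}\otimes\alpha_l)^*$, completing the proof of the exact sequence.
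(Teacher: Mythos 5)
Your proposal is correct and follows essentially the same route as the paper: identify the image of $\cdot s$ with the linear forms on $\mathbb{P}^4$ vanishing on $l$, recognize the cokernel as $H^0(l,\mathcal{O}_l(1))$ by restriction, and dualize using Proposition~\ref{basic_properties}(7) to identify $H^0(l,\mathcal{O}_l(1))^*$ with $I_2(\omega_{Q_l}\otimes\alpha_l)$. The only difference is that you supply an explicit justification (the length-$5$ subscheme $l\cdot\psi(Q_l)$ forcing $l\subset H$, plus a dimension count) for the image description, which the paper simply asserts.
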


\begin{proof}
We think of $R^1_F$ as the vector space of the  equations of hyperplanes in $\mathbb P^4$. Then the image of the multiplication by $s$ is the vector subspace $V_l$ of linear forms vanishing on $l$. So we have  
\[
0\lra V_l  \lra R^1_F \lra
H^0(l,\cO_l(1))\lra 0,
\]
the second map being the restriction to $l$. The dual gives the inclusion $H^0(l,\cO_l(1))^*\subset (R^ 1_F)^*$. By  Proposition~\ref{basic_properties}\eqref{bp-7}, the polar of the points in $l$ give the quadrics in $I_2(\omega_{Q_l}\otimes \alpha_l)$; hence we are done.
\end{proof}

\begin{rem}
Notice that the first vertical exact sequence 
$$
0 \lra I_2\left(\omega_{Q_l} \otimes \alpha_l\right) \lra J^2_F \lra J^4_{Q_l} \lra 0
$$
appears in three different ways: 
\begin{enumerate}
\item as the conormal exact sequence in the diagram,
\item as the dual of the multiplication by $s$ as in Lemma~\ref{lemma_s},
\item as the map sending the polar quadric to the threefold of a point $p \in {\mathbb P}^4$ to the polar quartic to the quintic of the projection of the point $p$ in ${\mathbb P}^2$. This can be easily seen from diagram \eqref{diagramma3}. 
\end{enumerate}

\end{rem}

\section{Second Gaussian map of plane quintics with an odd 2-torsion line bundle}\label{sec4}

In this section we fix a general cubic threefold $X$ with equation $F$ and a line $l\in F(X)$. Let $Q:=Q_l$ be the associated quintic plane curve, and let $\alpha :=\alpha_l $ be the non-trivial $2$-torsion point in the Jacobian $JQ$ such that $P(Q,\alpha)\cong JX$. We will assume that $l$ is non-special and that $Q$ is smooth.

To define the second Gaussian maps attached to a line bundle $L$ on $Q$, we consider on the surface $Q\times Q$ the line bundle $M:=p_1^*(L)\otimes p_2^*(L)$. Let $\Delta \subset Q\times Q$ be the diagonal, and consider the restriction maps
\[
\widetilde {\mu_{n, L}}\colon H^0(Q\times Q, M(-n\Delta))\lra H^0\left(Q\times Q, M(-n\Delta)\vert_{\Delta}\right).
\]
Since $\mathcal O_{Q\times Q} (\Delta)\vert_{\Delta}\cong \omega_Q^{-1}$, then 
\[
H^0\left(Q\times Q, M(-n\Delta\right)\vert_{\Delta})\cong H^0\left(Q,L^{\otimes 2}\otimes \omega_Q^{\otimes n}\right).  
\]
So, for $n=2$, we can define the second Gaussian map 
\begin{equation}
    \label{mu2L}
\mu_{2, L}\colon I_2(L) \lra H^0\left(Q, L ^{\otimes 2} \otimes \omega_Q^{\otimes 2}\right)
\end{equation}
as the restriction of $\widetilde {\mu_{2, L}}$ to $I_2(L) $, where $I_2(L): = \Ker (\Sym^2 H^0(Q, L) \rightarrow H^0(Q,L ^{\otimes 2})) $. We are interested in the map 
\[
\mu_2:=\mu_{2, \omega_Q \otimes \alpha}\colon I_2(\omega_Q \otimes \alpha) \lra H^0\left(Q,   \omega_Q^{\otimes 4}\right).
\]

For $n =1$, we have $\Lambda^2(H^0(Q, L)) \subset H^0(Q\times Q, M(-\Delta))$, and we consider the restriction of $\widetilde {\mu_{1, L}}$ to  $\Lambda^2(H^0(Q, L))$, 
$$
\mu_{1,L} \colon \Lambda^2(H^0(Q, L)) \lra H^0\left(Q, L^{\otimes 2} \otimes \omega_{Q}\right),
$$
given by 
$$s_1 \wedge s_2 \longmapsto s_2^2  \ d \left(\frac{s_1}{s_2}\right).$$

\begin{rem}
\label{tau}
There is a natural map $\tau \colon H^0(Q,\mathcal O_Q(8))\cong H^0(Q,\omega ^{\otimes 4}_Q) \rightarrow R^8_Q$. 

Indeed, there is a short exact sequence
\[
\xymatrix@R=0.7cm@C=0.7cm{
               0 \ar[r] & S^3\ar[r]^{\hspace{-15mm}\cdot Q} & S^8=H^0(\mathbb P^2, \mathcal O_{\mathbb P^2}(8)) 
              \ar[d] \ar[r] & H^0\left(Q,\omega_Q^{\otimes 4}\right) \ar[r] \ar[dl]^{\tau} & 0.
              \\
      & & R^8_Q  && } 
\]
Since $Q\cdot S^3\subset S^8$ belongs to the Jacobian ideal of $Q$,  there is a well-defined map $\tau\colon H^0(Q,\omega_Q^{\otimes 4})\rightarrow R^8_Q$.
\end{rem}

Our aim is to prove the vanishing of the composition of $\mu_2$ with $\tau$.

\begin{prop}
\label{mu2}
The map $\mu_2$ is injective, and 
\[
\tau \circ \mu_2\colon   I_2(\omega_{Q}\otimes \alpha) \longrightarrow R^8_Q
\]
is identically zero.
\end{prop}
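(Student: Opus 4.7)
The plan is to transfer the question to the plane model via Proposition~\ref{basic_properties}\eqref{bp-6}. Sections of $\omega_Q\otimes\alpha$ are identified with plane cubics through $D=(s)_0$, and the Prym-canonical map $\psi\colon Q\to\mathbb{P}^4$ arises as the restriction of the rational map $\varphi\colon\mathbb{P}^2\dashrightarrow\overline{S}\subset\mathbb{P}^4$ given by such cubics. For two cubics $A_i,A_j\in H^0(\mathbb{P}^2,\mathcal{I}_D(3))$, the product $A_iA_j\in S^6$ vanishes on $2D=(C\cdot Q)|_Q$, so there is a unique decomposition
\[
A_iA_j \;=\; CB_{ij}+Qh_{ij},\qquad B_{ij}\in S^4,\ h_{ij}\in S^1.
\]
I would first verify that under these identifications the multiplication $\Sym^2 H^0(\omega_Q\otimes\alpha)\to H^0(\omega_Q^{\otimes 2})$ reads $(A_i,A_j)\mapsto B_{ij}|_Q$; hence $P=\sum a_{ij}Y_iY_j\in I_2(\omega_Q\otimes\alpha)$ if and only if $\sum a_{ij}B_{ij}=0$ in $S^4$, equivalently $\sum a_{ij}A_iA_j = Q\cdot H$ for a linear form $H=\sum a_{ij}h_{ij}\in S^1$.

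For the vanishing $\tau\circ\mu_2=0$, I compute a polynomial representative of $\mu_2(P)\in H^0(Q,\omega_Q^{\otimes 4})\cong S^8/(Q\cdot S^3)$. Applying the local formula $\mu_2(P)=\sum a_{ij}df_i\cdot df_j$ with $f_i=A_i/s$, and using $s^2=C|_Q$, $Q|_Q=0$ and $dQ|_{TQ}=0$, the expression reduces to one coming from $\sum a_{ij}(dA_i\cdot dA_j)|_{TQ}$ divided by $C$. Writing this out using an explicit tangent direction to $Q$ (e.g.\ $\vec T=\nabla Q\times \vec x$ in homogeneous coordinates) and exploiting the vanishings $\sum a_{ij}\partial^k B_{ij}=0$ for $k=0,1,2$ (which follow from $\sum a_{ij}B_{ij}=0$) together with the linearity of $H$ (so $d^2H=0$), the remaining terms all pick up a factor of some $\partial_k Q$. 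Dividing by $C$ (which is coprime to $Q$) yields $\mu_2(P)\equiv \sum_k G_k\cdot\partial_k Q \pmod{Q\cdot S^3}$ for some $G_k\in S^3$, i.e.\ $\mu_2(P)\in J^8_Q$, so $\tau(\mu_2(P))=0$ in $R^8_Q$.

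For the injectivity of $\mu_2$, in coordinates where $l=\{x_0=x_1=x_2=0\}$ the two generators of $I_2(\omega_Q\otimes\alpha)$ are the polar quadrics $q_i=\partial F/\partial x_i$, $i=3,4$, at the endpoints of $l$. Geometrically $\mu_2(q)(p)=q(\psi'(p),\psi'(p))$, so $\mu_2(a_1q_1+a_2q_2)=0$ would force every tangent line $T_{\psi(p)}\psi(Q)$ to lie on the quadric $V(a_1q_1+a_2q_2)$. For generic $p$ the tangent line is not a line of $\overline{S}$ (a Del Pezzo surface of degree $4$ contains only finitely many lines, whereas the tangent lines of $\psi(Q)$ form a $1$-dimensional family), and a direct analysis then rules out non-trivial $(a_1,a_2)$.

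The main obstacle is the algebraic simplification in the vanishing step: exhibiting a representative of $\mu_2(P)$ manifestly in $J^8_Q+Q\cdot S^3$ requires careful bookkeeping of the differentials modulo $dQ$, $Q$ and $C$. The two driving ingredients are the linearity of $H$ (so $d^2H=0$, hence differentiating $\sum a_{ij}A_iA_j=QH$ produces only Jacobian-ideal terms on the right-hand side) and the strong vanishing $\sum a_{ij}B_{ij}=0$ (which eliminates the potentially obstructing $C$-components on the left-hand side).
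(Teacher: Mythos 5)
Your reduction of $I_2(\omega_Q\otimes\alpha)$ to the relation $\sum a_{ij}A_iA_j=QH$ with $H$ linear is correct, but the vanishing argument has a genuine gap at exactly the point you flag, and it is not mere bookkeeping. The natural polynomial representative of the tangent field to $Q$ is $\vec T=\nabla Q\times\vec x$, so \emph{every} term of $\sum a_{ij}(dA_i\cdot dA_j)|_{TQ}$ carries a factor of some $Q_{x_k}$ automatically, with no use of the relation at all; the claim that "the remaining terms all pick up a factor of some $\partial_kQ$" is therefore vacuous, and the entire content of the proposition sits in the division by $C$. That division does not preserve membership in the Jacobian ideal: $C\notin J_Q$, and multiplication by the class of $C$ in the Jacobian ring, e.g.\ $\cdot\, C\colon R^6_Q\to R^8_Q$, maps a $10$-dimensional space to a $3$-dimensional one, so knowing $C\cdot W\in J_Q+Q\cdot S$ tells you essentially nothing about $W$. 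To make your route work you would have to exhibit a decomposition $\sum a_{ij}(dA_i\cdot dA_j)|_{TQ}=C\cdot W+Q\cdot V$ with $W$ \emph{manifestly} in $J_Q$, and neither the linearity of $H$ nor the vanishing $\sum a_{ij}B_{ij}=0$ supplies such a decomposition.

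The paper sidesteps this issue entirely: it evaluates $\mu_2$ only on the five rank-$4$ quadrics $\Gamma_i$, factors $\mu_2(\Gamma_i)=\mu_{1,L_i}(s_1\wedge s_2)\,\mu_{1,M_i}(t_1\wedge t_2)$, and recognizes $\div(\Gamma_{p_i}(Q))-p_i$ --- the polar quartic of $p_i$, an explicit element of $J^4_Q$ --- inside $\div(\mu_2(\Gamma_i))$. This yields $\ell\cdot H_i\in J^9_Q$ for two lines $\ell$ through $p_i$; a third annihilating linear form is obtained by writing $\rho_i$ as a combination of $\rho_l,\rho_k$ and using the line through $p_l,p_k$; and Macaulay duality ($R^1_Q\cdot H_i=0$ in $R^9_Q$ forces $H_i=0$ in $R^8_Q$) concludes. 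Your injectivity argument is also incomplete: $\mu_2(a_1q_1+a_2q_2)=0$ would force the tangent lines of $\psi(Q)$ to lie on the single quadric $V(a_1q_1+a_2q_2)$, which carries a positive-dimensional family of lines, so the finiteness of lines on the degree-$4$ Del Pezzo surface $\overline S$ is beside the point and the postponed "direct analysis" is the whole argument. The paper instead gets injectivity for free from its divisor computation: $\mu_2(\Gamma_i)$ and $\mu_2(\Gamma_j)$ have distinct divisors, hence are linearly independent, and $\dim I_2(\omega_Q\otimes\alpha)=2$.
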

\begin{proof} 
It suffices to prove the result in the generic case where $D$ is reduced. 
We compute $\mu_2$ on the rank~$4$ quadrics $\Gamma_i$ corresponding to the polar curves of the five points $\psi (p_i)$. To this purpose, we fix one of  them, $p_i \in Q$, and we consider the pencils $|L_i|:=|\cO_Q(1)(-p_i)|$ and $|M_i|=|\cO_Q(1)(p_i)\otimes \alpha| = |\omega_{Q} \otimes \alpha \otimes L_i^{-1}|$. Taking bases $\{s_1, s_2\}$ of $H^0(Q, L_i)$ and  $\{t_1, t_2\}$ of $H^0(Q, M_i)$, then $\Gamma_i = s_1t_1 \odot s_2t_2 - s_1t_2 \odot s_2t_1$ and $\mu_2(\Gamma_i)=\mu_{1, L_i} (s_1 \wedge s_2)\mu_{1, M_i} (t_1 \wedge t_2)$ (see,  \textit{e.g.}, \cite[Lemma~2.2]{cf-michigan}), where we denote by $\odot$ the symmetric product, $v \odot w := v \otimes w + w \otimes v$. The linear system attached to $L_i$ is the $g^1_4$ given by the lines through $p_i$, and the divisor $\div(\mu_{1,L_i}(s_1\wedge s_2))=:R_i$ is the degree $18$ divisor of points $x\in Q$ such that the tangent line at $x$ goes through $p_i$, so $R_i = \div(\Gamma_{p_i}(Q) ) -2p_i$. 

Generically, this divisor does not contain any of the five points. On the other hand, the line bundle
$M_i\cong\cO(2)(-p_1-\dots -\widehat {p_i}-\dots -p_5)$ can be seen as the linear system of conics through the rest of the points $p_j$. The divisor $\div(\mu_{1,M_i}(t_1\wedge t_2))=:D_i$ is given by the points $x\in Q$ such that there is a conic passing through $p_j$, $j\neq i$, and tangent to $Q$ at $x$. Since the conic $C$ determined by $p_1,\dots ,p_5$ satisfies $Q\cdot C=2p_1+\dots +2p_5$, we have that $p_i\in D_i$ and, for a general $Q$, the rest of the points $p_j$ do not appear in $\mu_2(\Gamma_i)$. 
This shows that $\mu_2$ is injective since the divisors of $\mu_2(\Gamma_i)$ and $\mu_2(\Gamma_j)$, $i\ne j$, are different and $ I_2(\omega_{Q}\otimes \alpha) $ has dimension $2$. 

Now we put $D_i=D_i'+p_i$, and we denote by $\mu_2 (\Gamma _i) =\rho_i
\in H^0(Q, \mathcal O_{Q}(8))$, $i=1,\ldots, 5$, the images of the
five elements $\Gamma _i\in I_2(\omega_{Q}\otimes \alpha)$
corresponding to the rank $4$ quadrics. We choose polynomials $H_i\in
S^8$ representing $\rho_i\in S^8/Q\cdot S^3$. Then, according to the
previous discussion, 
\[
\div\left(\rho_i\right)=\div\left(\Gamma_{p_i}(Q)\right)-2p_i +D_i'+p_i=\div\left(\Gamma_{p_i}(Q)\right)-p_i +D'_i,
\]
which is an effective divisor of degree $40$. 
Let $l_{1i},l_{2i}\in H^0(Q,\mathcal O_{Q}(1)(-p_i))$ be the equations of two different lines passing through $p_i$. Then
\[
\div\left(l_{ji}\cdot \rho_i\right)=\div \left(\Gamma_{p_i}(Q)\right)+D_i'+E_{ji},
\]
where $j=1,2$ and $E_{ji}$ is an effective divisor such that $\div(l_{ji})=p_i+E_{ji}$. Therefore, $D'_i+E_{ji}$ is the effective divisor attached to a section in $ H^0(Q, \mathcal O_{Q}(5))$. Notice that the short exact sequence 
$$
0 \longrightarrow \mathcal O_{\mathbb P^2} \longrightarrow                  \mathcal O_{\mathbb P^2}(5) \longrightarrow    
  \mathcal O_{Q}(5) \longrightarrow 0
$$
implies that $H^0(\mathbb P^2, \mathcal O_{\mathbb P^2}(5))$ surjects onto $H^0(Q, \mathcal O_{Q}(5))$. Hence there are  homogeneous polynomials $G_1, G_2$ of degree $5$ that, up to multiples of the equation of $Q$, satisfy  $l_{ji}\cdot H_i=G_j \cdot \Gamma_{p_i}(Q)\in J^9_{Q}$.
 Now we consider the line $r_{lk}$ determined by any two points $p_l, p_k$ with $i\neq l,k$. If we prove that  $r_{lk}\cdot H_i$ also belongs to $J^9_{Q}$, then  since $l_{1i}, l_{2i}, r_{lk}$ generate $R^1_{Q}$, we obtain that $R_Q^1 \cdot H_i=0$ in $R^9_{Q}$, so $H_i\in J^8_{Q}$ since the multiplication in the Jacobian ring is a perfect pairing.  

Since $I_2(\omega_{Q}\otimes \alpha)$ is $2$-dimensional, we can write $\rho_i$ as a linear combination $\rho_i=\lambda_l \rho_l + \lambda_k \rho_k $.  Then we consider $r_{lk} \cdot (\lambda_l H_l + \lambda_k H_k)$. By the 
previous discussion, since $r_{lk}$ contains $p_l$ and $p_k$, we know that $H_l\cdot r_{lk}$ and $H_k \cdot r_{lk}$ belong to $J^9_Q$. Thus, so does $r_{lk}\cdot H_i \in J^9_Q$, and we are done.
\end{proof}

\section{A lifting of \texorpdfstring{$\boldsymbol{\tau}$}{Lg}}\label{sec5}

Throughout this section we will always assume that the (smooth) quintic $Q$ and the conic $C$ only intersect at non-singular points of $C$ and that the maximal intersection multiplicity of $Q$ and $C$ at a point is $4$. 

Consider the dual of the exact sequence in Lemma~\ref{lemma_s}. Via the identifications ${R^1_F}^* \cong R^4_F$ and ${R^1_Q}^* \cong R^8_Q$, it becomes 
$$
0 \longrightarrow I_2(\omega_Q \otimes \alpha) \longrightarrow  R^4_F \stackrel{h}\longrightarrow R^8_Q \longrightarrow 0.
$$
Our purpose is to show the following. 

\begin{prop}
\label{lift}
The map $\tau \colon H^0(Q,\omega ^{\otimes 4}_Q) \rightarrow R^8_Q$ lifts to a map $\tilde{\tau}\colon H^0(Q,\omega ^{\otimes 4}_Q) \rightarrow R^4_F$ such that $h\circ \tilde{\tau} = \tau$.
\end{prop}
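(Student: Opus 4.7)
Formally, the content of the proposition is modest: the map $h\colon R^4_F \twoheadrightarrow R^8_Q$ is surjective, being dual to multiplication by the non-zero section $s$ on the integral curve $Q$ (which is injective by Lemma~\ref{lemma_s}), and a surjection of finite-dimensional vector spaces always splits. So the existence of \emph{some} lift $\tilde\tau$ is automatic. The real point of the proof is to construct a canonical geometric lift, compatible with the rest of the setup, so that in Section~\ref{sec6} the composition $\tilde\tau\circ\mu_2$ can be identified with the Hodge--Gaussian map $\rho|_{I_2(\omega_Q\otimes\alpha)}$.

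My plan is to construct $\tilde\tau$ using the Del Pezzo surface $\bar S\subset \mathbb{P}^4$ of Proposition~\ref{basic_properties}\eqref{bp-8} and the structural relation between $F$ and $Q$. Passing to the polarity identifications $R^4_F\cong J^2_F$ and $R^8_Q\cong J^4_Q$, and choosing coordinates so that $l=V(x_0,x_1,x_2)$, the map $h$ is the projection $\sum_{i=0}^{4}a_iF_{x_i}\mapsto \sum_{i=0}^{2}a_iQ_{x_i}$, with kernel $I_2=\langle F_{x_3},F_{x_4}\rangle$. Since $\psi(Q)\subset X$ and $\psi=[f_0:\cdots:f_4]$ for a basis $f_0,\dots,f_4$ of $N_D$, the degree-$9$ polynomial $F(f_0,\dots,f_4)$ vanishes on $Q$, so $F(f_0,\dots,f_4)=Q\cdot A$ for a unique $A\in S^4(\mathbb{C}^3)$. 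Differentiating this with respect to the plane coordinates yields the fundamental identity
$$\sum_{i=0}^{4} F_{x_i}(f)\,\frac{\partial f_i}{\partial x_j}\;\equiv\; Q_{x_j}\cdot A\pmod{Q},\qquad j=0,1,2,$$
the algebraic bridge between the Jacobian ideals of $F$ and of $Q$.

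For $g\in H^0(Q,\omega_Q^{\otimes 4})=S^8(\mathbb{C}^3)/Q\cdot S^3$, I would then choose a $\tilde g\in S^4(\mathbb{C}^5)$ with $f^*\tilde g\equiv A\cdot g\pmod{Q\cdot S^7}$ and set $\tilde\tau(g):=[\tilde g]\in R^4_F$. The kernel of $f^*\colon S^4(\mathbb{C}^5)\to S^{12}(\mathbb{C}^3)/Q\cdot S^7$ is exactly $I_2\cdot S^2$ (Koszul on the complete intersection $\bar S=V(I_2)$), and this lies in $J^4_F$ because $I_2\subset J^2_F$ by Proposition~\ref{basic_properties}\eqref{bp-7}; hence $\tilde\tau(g)$ is well-defined and linear in $g$. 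The relation $h\circ\tilde\tau=\tau$ would then follow from the differentiated identity above by matching Macaulay pairings: multiplying $\tilde g$ by the linear forms $x_j$, $j\le 2$, and pulling back via $f^*$, produces $A\cdot g\cdot f_j$, which by the chain rule reproduces the polar representation $\sum_j\langle g, x_j\rangle_Q\, Q_{x_j}\in J^4_Q$ of $\tau(g)$ after reducing modulo $J_Q$ and matching the (classically related) generators of $R^5_F$ and $R^9_Q$.

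The hardest step will be to show that multiplication by $A$ lands inside the image of $f^*$ --- equivalently, that $A\cdot g$ extends to a section of $\mathcal{O}_{\mathbb{P}^4}(4)$ modulo $Q\cdot S^7$. This is a projective-normality statement for the embedding $\bar S\subset \mathbb{P}^4$, and should follow from $\bar S$ being a complete intersection of two quadrics, together with the dimension count ($\dim S^4(\mathbb{C}^5)=70$, $\dim I_2\cdot S^2=29$, $\dim H^0(Q,\omega_Q^{\otimes 4})=35$) and the identification of $A$ as the $\bar S$-residue of $F$ in the right graded piece of the coordinate ring of $\bar S$; the delicate point is to keep track of the factor $Q$ throughout the degree-$12$ identity on $\mathbb P^2$.
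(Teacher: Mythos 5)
Your overall strategy---define $\tilde\tau(g)$ as the class in $R^4_F$ of a quartic on $\mathbb P^4$ whose restriction to the curve $\psi(Q)\subset\bar S$ reproduces $g$ (after twisting by the residue $A=F(f)/Q$)---is essentially the paper's construction written in explicit polynomial form: since $X\cap\bar S=\psi(Q)+2l$ as cycles, one checks that $A$ is a scalar multiple of $C^2$, so the condition $f^*\tilde g\equiv A\cdot g\pmod{Q\cdot S^7}$ just says that $\tilde g$ restricts to $g$ under $\mathcal O_{\mathbb P^4}(4)|_{\psi(Q)}\cong(\omega_Q\otimes\alpha)^{\otimes 4}\cong\omega_Q^{\otimes 4}$, which is how the paper's $\tilde\tau$ arises from diagram~\eqref{diagrammone}. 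The step you single out as hardest (that $A\cdot g$ lies in the image of $f^*$) is actually the easy one: it follows from projective normality of the complete intersection $\bar S$ together with $H^1(S,\mathcal O_S(4M-Q'))=0$, and a dimension count ($41-6=35=h^0(Q,\omega_Q^{\otimes 4})$) confirms surjectivity.

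The genuine gap is the kernel computation, on which the well-definedness of your $\tilde\tau$ entirely rests. You claim $\ker\bigl(f^*\colon S^4(\mathbb C^5)\to S^{12}(\mathbb C^3)/Q\cdot S^7\bigr)=I_2\cdot S^2$. This is false: because you reduce modulo $Q\cdot S^7$, the kernel is the degree-$4$ part of the ideal of the \emph{curve} $\psi(Q)$, not of the surface $\bar S$. Indeed the image of $f^*$ lies inside $H^0(Q,\mathcal O_Q(12)(-4D))\cong H^0(Q,\omega_Q^{\otimes 4})$, which is $35$-dimensional, so the kernel has dimension $70-35=35$, whereas $\dim I_2\cdot S^2=29$. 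The missing $6$ dimensions are exactly $H^0(S,\mathcal O_S(4M-Q'))=H^0(S,\mathcal O_S(7H-3E))$, the quartics vanishing on $\psi(Q)$ but not on $\bar S$; for $\tilde\tau(g)=[\tilde g]$ to be independent of the choice of $\tilde g$, you must show these too lie in $J^4_F$. That is precisely the nontrivial content of the paper's Proposition~\ref{sollevamento}: one identifies $H^0(S,\mathcal O_S(4M-Q'))$ with $C'Q'\cdot H^0(S,\mathcal O_S(5H-2E))$ and uses the Euler-type identity $\sum_i x_i(CQ_{x_i}-QC_{x_i})=3CQ$ of \eqref{euler} (resting on Lemma~\ref{Li-Ti}) to exhibit $C'Q'$ in the image of $(L_0,L_1,L_2)$, hence the whole $6$-dimensional space inside the restriction of $J^4_F$. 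Your differentiated identity $\sum_i F_{x_i}(f)\,\partial f_i/\partial x_j\equiv Q_{x_j}A\pmod Q$ is a correct and closely related bridge between the two Jacobian ideals, but as written your argument never uses it to close this gap, and no projective-normality statement for $\bar S$ can substitute for it: without it the map is simply not well defined.
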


To prove  Proposition~\ref{lift}, we need to introduce some preliminary results. 

Recall that the map $\psi\colon Q \rightarrow \mathbb P H^0(Q,\omega_Q \otimes \alpha)^*$ extends to the rational map $\varphi\colon {\mathbb P}^2 \dashrightarrow {\mathbb P}^4$ defined by the linear system $|N_D|$ (see   Proposition~\ref{basic_properties}\eqref{bp-6}). Consider the  surface $S$ defined  in  Proposition~\ref{basic_properties}\eqref{bp-8}.

Denote by $E$ the exceptional divisor of the map  $\epsilon$. Set $\mathcal O_S(M) ={\overline{\varphi}}^*(\mathcal O_{{\mathbb P}^4}(1))$, $\mathcal O_S(H) = \epsilon^*(  O_{\mathbb P^2}(1)    )$. Hence we have $\mathcal O_S(M) =\mathcal O_{S}(3H-E)$.


Consider the first two columns of diagram \eqref{diagramma3}. The map $m_{\alpha}\colon H^0(\mathbb P^4, {\mathcal O}_{\mathbb P^4}(2)) \rightarrow H^0(\mathbb P^2,{\mathcal O}_{{\mathbb P}^2}(4))$ factors through the restriction $H^0(\mathbb P^4, {\mathcal O}_{{\mathbb P}^4}(2)) \rightarrow H^0(S, {\mathcal O}_S(2M))$. We have $H^0(S,\mathcal O_S(2M-\psi(Q)))=H^0(S,\mathcal O_S(H-E))=0$ since there is no line passing through $D$,  so the restriction map $H^0(S,\mathcal O_S(2M))\rightarrow H^0(Q,\omega_Q^{\otimes 2}) \cong H^0({\mathbb P}^2, {\mathcal O}_{{\mathbb P}^2}(4))$ is injective.

\begin{rem}
\label{quartic-sextic}
Notice that the vector space $H^0(S,\mathcal O_S(2M)) = H^0(S,\mathcal O_S(6H-2E))$ is naturally identified with the space of the plane sextics osculating the conic at the right order. If $D$ is reduced, this means that they are singular in the five points $p_i$.  The inclusion $H^0(S,\mathcal O_S(2M))\rightarrow H^0(Q,\omega_Q^{\otimes 2}) \cong H^0({\mathbb P}^2, {\mathcal O}_{{\mathbb P}^2}(4))$ can be seen as follows: the divisor of the restriction of such a sextic to $Q$ is the divisor of the restriction of a unique plane quartic to $Q$ plus $2D$.  
\end{rem}

The surface $S$  will be useful to connect the study of the Jacobian rings of $X$ and $Q$.  

We can choose coordinates  in ${\mathbb P}^4$ such that $J^2_F$ is generated by five quadrics $F_{x_0}$, $F_{x_1}$, $F_{x_2}$, $F_u$, $F_v$, where $u,v$ are  coordinates on the line $l$ and $x_0$, $x_1$, $x_2$ are coordinates on the plane. The surface $\overline{S}$ is then the intersection of $F_u$ and $F_v$, and we set $L_i := \overline{\varphi }^*({F_{x_i}}) \in H^0(S, {\mathcal O}_S(2M))$ (see Proposition~\ref{basic_properties}\eqref{bp-8}).

Hence each $L_i$ corresponds to a plane sextic $T_i$ such that the restriction to $Q$ yields the unique quartic~$Q_{x_i}$.

\begin{lem}
\label{Li-Ti}
We have $T_i= CQ_{x_i}-QC_{x_i}$; equivalently, their strict transforms are the curves $L_i \in H^0(S,\mathcal O_S(2M))$. 
\end{lem}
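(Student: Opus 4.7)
The plan is to reduce the lemma to a formal identity about the symmetric $3\times 3$ matrix $N$ attached to the conic bundle structure on $X$ induced by $l$. First I would choose coordinates $(x_0,x_1,x_2,u,v)$ on $\mathbb P^4$ with $l=V(x_0,x_1,x_2)$; since $l\subset X$, the cubic $F$ takes the conic-bundle form
$$F=au^2+2buv+cv^2+2du+2ev+g,$$
with $a,b,c$ linear and $d,e,g$ of degrees $2,2,3$ in $x=(x_0,x_1,x_2)$. Put
$N=\left(\begin{smallmatrix}a&b&d\\ b&c&e\\ d&e&g\end{smallmatrix}\right)$,
so that $Q=\det N$. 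Let $w=(w_1,w_2,w_3)^T$ denote the third column of the adjugate of $N$, so $w_1=be-cd$, $w_2=bd-ae$, $\lambda:=w_3=ac-b^2$; this vector spans $\ker N(x)$ when $x\in Q$. In these coordinates
$$\bar\varphi\colon [x_0:x_1:x_2]\longmapsto [\lambda x_0:\lambda x_1:\lambda x_2:w_1:w_2],$$
and a direct substitution (using that $a,b,c$ are linear, $d,e$ quadratic, and $g$ cubic in $x$) gives $L_i=\bar\varphi^*(F_{x_i})=w^T N_{x_i}\,w\in \mathbb C[x_0,x_1,x_2]_6$.

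The next step is to identify $\lambda$ with the conic $C$ of Proposition~\ref{basic_properties}\eqref{bp-5}. The vanishing of $\lambda=ac-b^2$ at $x\in\mathbb P^2$ expresses that the leading binary form $au^2+2buv+cv^2$ of the fibre conic has a double root, equivalently that the singular point of the degenerate fibre conic over $x\in Q$ lies on $l$; these are precisely the ``triple points'' of Proposition~\ref{basic_properties}\eqref{bp-5}. Hence $\div(\lambda|_Q)$ is supported on $D=p_1+\cdots+p_5$, and the degree equality $\deg\lambda|_Q=10=\deg 2D$ forces $\lambda|_Q=2D$; by the uniqueness statement in Proposition~\ref{basic_properties}\eqref{bp-5} we may take $C=\lambda$ after rescaling.

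Finally, differentiating Cramer's relation $N\cdot N^{\mathrm{adj}}=Q\cdot\Id$ with respect to $x_i$ and left-multiplying by $N^{\mathrm{adj}}$ yields
$$Q\,(N^{\mathrm{adj}})_{x_i}=Q_{x_i}\,N^{\mathrm{adj}}-N^{\mathrm{adj}}N_{x_i}N^{\mathrm{adj}}.$$
Reading off the $(3,3)$-entry and using $N^{\mathrm{adj}}e_3=w$ together with the symmetry of $N^{\mathrm{adj}}$ produces the polynomial identity $w^T N_{x_i}\,w=\lambda\,Q_{x_i}-Q\,\lambda_{x_i}$. Combined with $L_i=w^TN_{x_i}w$ and $C=\lambda$ this establishes $T_i=CQ_{x_i}-QC_{x_i}$.

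The main obstacle I expect is the identification $C=\lambda$ (the matrix identity itself is just a differentiated Cramer formula). A conceptually cleaner alternative, which bypasses this step, is to invoke the injectivity of $H^0(S,\cO_S(2M))\hookrightarrow H^0(\mathbb P^2,\cO(4))$ (coming from $H^0(S,H-E)=0$ recorded just before Remark~\ref{quartic-sextic}): both $T_i$ and $CQ_{x_i}-QC_{x_i}$ are sextics with multiplicity $2$ at the points of $D$, and both restrict to $Q$ with divisor $\div(Q_{x_i}|_Q)+2D$, so they must coincide.
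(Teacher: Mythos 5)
Your main route is genuinely different from the paper's, and its two computational pillars are correct: with $l=V(x_0,x_1,x_2)$ and $F=au^2+2buv+cv^2+2du+2ev+g$ one does get $Q=\det N$, $\varphi^*(F_{x_i})=w^TN_{x_i}w$, and the differentiated Cramer relation does give $w^TN_{x_i}w=\lambda Q_{x_i}-Q\lambda_{x_i}$. The paper never introduces the matrix $N$: it works directly with the sextic $CQ_{x_i}-QC_{x_i}$, computes its partials $C_{x_j}Q_{x_i}+CQ_{x_ix_j}-Q_{x_j}C_{x_i}-QC_{x_ix_j}$, uses the proportionality of the gradients of $Q$ and $C$ at the points of $\Supp(D)$ to show this sextic is singular there (plus a separate local computation for the osculation conditions of Remark~\ref{quartic-sextic} when $D$ is non-reduced), and then concludes from the equality of restrictions to $Q$ and the injectivity of $H^0(S,\cO_S(2M))\to H^0(Q,\omega_Q^{\otimes 2})$. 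So your ``conceptually cleaner alternative'' \emph{is} the paper's proof, except that the assertion you leave unproved --- that $CQ_{x_i}-QC_{x_i}$ has multiplicity $2$ at the $p_k$, resp.\ satisfies the osculation conditions --- is precisely where the paper's work lies. What your Cramer identity buys is that it is an identity of polynomials, so membership of $CQ_{x_i}-QC_{x_i}$ in $H^0(S,\cO_S(2M))$ comes for free and reduced and non-reduced $D$ are treated uniformly.

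The one genuine gap, which you rightly flag, is the identification $C=\lambda$. Knowing that $\div(\lambda|_Q)$ is an effective divisor of degree $10$ supported on $\{p_1,\dots,p_5\}$ does \emph{not} force it to equal $2D$: nothing in that count prevents multiplicities like $3+1+2+2+2$. Two ways to close this. Cheaply: restrict your identity $T_i=\lambda Q_{x_i}-Q\lambda_{x_i}$ to $Q$; since $\div(T_i|_Q)=\div(Q_{x_i}|_Q)+2D$ (this is recorded in the sentence defining $T_i$, and is also what the paper's proof uses), you get $\div(\lambda|_Q)=2D$ at once, and the uniqueness in Proposition~\ref{basic_properties}\eqref{bp-5} gives $C=\lambda$ up to scalar, which is all the statement needs. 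Alternatively, check tangency directly: in coordinates on the fibre adapted to $p_k$ (node of the fibre conic at $[1{:}0{:}0]$) one has $a=b=d=0$ at $p_k$, whence $d\lambda=c\,da$ and $dQ=(cg-e^2)\,da$ there, so $\{\lambda=0\}$ and $Q$ are tangent at each $p_k$ (non-vanishing of $c$ and $cg-e^2$ following from $l$ non-special and $Q$ smooth), giving $\div(\lambda|_Q)\ge 2D$ and hence equality by degree.
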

\begin{proof}
 By the definition of the curves $T_i$, all the points $p_k$ in the support of $D$  belong to the three curves. On the other hand,
 \[
 \left(CQ_{x_i}-QC_{x_i}\right)_{x_j}=C_{ x_j} Q_{x_i}+C Q_{x_i\, x_j}-Q_{x_j}  C_{ x_i}-Q C_{x_i\, x_j}.
 \]
Since $Q$ and $C$ are tangent at the points $p_k$, we have that, for all $k$,  
\[
 (Q_{x_0}(p_k),Q_{x_1}(p_k),Q_{x_2}(p_k)) \wedge (C_{x_0}(p_k),C_{x_1}(p_k),C_{x_2}(p_k))=0.
\]
Hence $C_{ x_j} Q_{x_i}-Q_{x_j}  C_{ x_i}$ vanishes at all the points $p_k$, so $T_i$ is singular at the points $p_k$. If two of the $p_k$ are not distinct, we make a local computation to show that the osculating conditions of the sextic with the conic $C$ as in Remark~\ref{quartic-sextic} are  verified. 
Indeed, assume that the intersection multiplicity of $Q$ and $C$ at a point $p$ is $4$, and take coordinates so that $p = [1,0,0]$. We assume that the common tangent to the conic and the quintic in $p$ is the line $\{x_1 =0\}$. So we have 
$$Q = x_0^4 x_1 + x_0^3 g_2(x_1, x_2) + x_0^2 g_3(x_1, x_2) + x_0 g_4(x_1, x_2) + g_5(x_1, x_2),$$
where the $g_i(x_1, x_2)$ are homogeneous polynomials of degree $i$. First assume  that $C$ is smooth; then we may suppose that $C = x_0x_1 - x_2^2$. A straightforward computation shows that $Q(1, x_2^2, x_2) = x_2^2(1 +a_3) + x_2^3( a_2 + b_4) + h.o.t.$, where $a_2$, $a_3$, $b_4$ are the following coefficients of $g_2$ and $g_3$: 
$g_2(x_1, x_2) = a_1x_1^2 + a_2 x_1 x_2 + a_3 x_2^2$, $g_3(x_1, x_2) = b_1x_1^3 + b_2 x_1^2 x_2 + b_3 x_1 x_2^2 + b_4 x_2^3$.

So the conditions that the intersection multiplicity of $C$ and $Q$ in $p$ is $4$ are: $a_3 = -1$, $a_2 + b_4 =0$. 
Then one immediately checks that $(CQ_{x_i}-QC_{x_i})(1,0,x_2) = \lambda_i x_2^4 + h.o.t.$ for   $i =0,1,2$. Hence the conditions of Remark~\ref{quartic-sextic} are satisfied.

If $C$ is not smooth, we can assume  $C = x_0 x_1$, so in this case the conditions that the intersection multiplicity of $C$ and $Q$ in $p_1$ is $4$ are: $a_3 = b_4 =0$. Then again it is immediate to verify that  $(CQ_{x_1}-QC_{x_1})(1,0,x_2) = \lambda x_2^4 + h.o.t.$, while $(CQ_{x_i}-QC_{x_i})(1,0,x_2)=0$ for $i =0,2$. So in this case too, the conditions of Remark~\ref{quartic-sextic} are satisfied.

Since the divisor of $CQ_{x_i}-QC_{x_i}$ restricted to $Q$ is $2D + \div(Q_{x_i |Q})$, the image of $L_i$ in $H^0(Q,\omega_Q^{\otimes 2})$ coincides with $({CQ_{x_i}-QC_{x_i}})|_{Q}$. 
\end{proof}

Observe that, as a consequence of the Euler formula, the following equality holds:
\begin{equation} \sum x_iT_i= \sum x_i(CQ_{x_i}-QC_{x_i}) = C \sum x_i Q_{x_i}- Q \sum x_i C_{x_i}= 3CQ. \label{euler} \end{equation}

Now we consider the following exact sequence attached to the divisors $L_i$:
\begin{equation}\label{koszul_F}
  0 \lra {\mathcal O}_S(-2M) \lra {\mathcal O}_S^{\oplus 3} \lra {\mathcal O}_S^{\oplus 3}(2M) \xrightarrow{(L_0,L_1,L_2)}  {\mathcal O}_S(4M) \lra 0.
    \end{equation}
%
%
%

This splits into the following two short exact sequences:
$$0 \longrightarrow {\mathcal O}_S(-2M) \longrightarrow {\mathcal O}_S^{\oplus 3} \longrightarrow \mathcal F\longrightarrow 0$$
and
\begin{equation}\label{fascio_F}
  0\lra \mathcal F \lra {\mathcal O}_S^{\oplus 3}(2M) \xrightarrow{(L_0,L_1,L_2)}  {\mathcal O}_S(4M)\lra0.
\end{equation}

Using the first exact sequence and taking global sections, we obtain 
$$
0 \longrightarrow H^0(S,{\mathcal O}_S) ^{\oplus 3}\longrightarrow H^0(S,{\mathcal F}) \longrightarrow H^1({S,\mathcal O}_S(-2M))=0,
$$
where $H^1(S,{\mathcal O}_S(-2M)) \cong H^1(S,K_S(2M))^* =0$, by the Kodaira vanishing theorem. 
So $H^0(S,{\mathcal F})$ is $3$-dimensional. 
Moreover, we have 
$$
0 \longrightarrow H^1\left(S,{\mathcal O}_S^{\oplus 3}\right) \longrightarrow H^1(S,{\mathcal F}) \longrightarrow H^2(S,{\mathcal O}_S(-2M))\longrightarrow 0,
$$
so, since $H^2(S,{\mathcal O}_S(-2M)) \cong H^0(S,K_S(2M))^* \cong H^0(S,M)^* \cong {\mathbb C}^5$, we get $ H^1(S,{\mathcal F}) \cong {\mathbb C}^5 $. 

From now on we will denote by $Q'$ the proper transform of the curve $Q$ in $S$. 

Restricting the above exact sequence to $Q'$ and taking global sections, we obtain the following diagram: 
\begin{equation}\label{diagrammone}
\xymatrix@R=0.8cm@C=0.8cm{
 &  & H^0(S,\mathcal O_S(4M-Q')) \ar@{^{(}->}[d] \ar@{^{(}->}[r] & H^1(S,\mathcal F(-Q')) \ar[d]  \\
 H^0(S,\mathcal F)  \ar@{^{(}->}[d] \ar@{^{(}->}[r] & 
H^0(S,{\mathcal O}_S(2M))^{\oplus 3} \ar@{^{(}->}[d] \ar[r]^{f} & H^0(S,\mathcal O_S(4M))\ar@{->>}[d] \ar@{->>}[r] & H^1(S,\mathcal F) \ar@{->>}[d]\\
H^0(Q',{\mathcal F}|_{Q'}) \ar@{^{(}->}[r] & H^0\left(Q',\omega_{Q'}^{\otimes 2}\right)^{\oplus 3} \ar[r]^{\overline f} & H^0\left(Q',\omega_{Q'}^{\otimes 4}\right) \ar@{->>}[r]& H^1\left(Q',{\mathcal F}|_{Q'}\right)\rlap{.}
  }      
  \end{equation}

\begin{prop}
\label{r4-r8}
We have the following isomorphisms: $H^1(S,\mathcal F)\cong R^4_F$ and $H^1(Q',{\mathcal F}|_{Q'})\cong R^8_{Q}$. 
\end{prop}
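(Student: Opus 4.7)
The plan is to express both $H^1$ groups as cokernels of the multiplication-by-$(L_0,L_1,L_2)$ maps on global sections, and then to identify those cokernels with graded pieces of the Jacobian rings $R_F$ and $R_{Q}$ via the Koszul resolutions of $\overline S\subset \mathbb P^4$ and $Q\subset \mathbb P^2$. For the first isomorphism, the long exact sequence of \eqref{fascio_F}, combined with the vanishing $H^1(S,\cO_S(2M))=0$ (Kodaira applied to $2M=K_S+3M$, using $-K_S=M$ ample on the degree-$4$ Del Pezzo $S$), gives
\[
H^1(S,\cF)=\mathrm{coker}\bigl(H^0(\cO_S(2M))^{\oplus 3}\xrightarrow{(L_0,L_1,L_2)} H^0(\cO_S(4M))\bigr).
\]
Since $\overline\varphi_*\cO_S(kM)=\cO_{\overline S}(k)$ (projection formula, using that $\overline\varphi$ has connected fibres and $\overline S$ is normal), one may replace $S$ by $\overline S$. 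The Koszul resolution of the complete intersection $\overline S=V(F_u,F_v)\subset\mathbb P^4$ yields $H^0(\overline S,\cO_{\overline S}(k))=S^k/(F_u\cdot S^{k-2}+F_v\cdot S^{k-2})$ for $k=2,4$, and under this identification $L_i=\overline\varphi^*(F_{x_i})$ represents the class of $F_{x_i}$. Hence the cokernel is $S^4/J_F^4=R^4_F$.

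For the second isomorphism, I would restrict \eqref{fascio_F} to $Q'$; the sequence remains short exact because $L_0,L_1,L_2$ have no common zero on $S$ (otherwise all five partials of $F$ would vanish simultaneously at a common point, contradicting smoothness of $X$), so $\cF$ is locally free. Adjunction together with $K_S=-M$ and $Q'=5H-E$ gives $\omega_{Q'}=2H|_{Q'}=\omega_Q$, and Proposition~\ref{basic_properties}\eqref{bp-6} gives $M|_{Q'}=\omega_Q\otimes\alpha$; since $\alpha$ is $2$-torsion, $2M|_{Q'}=\omega_Q^{\otimes 2}$ and $4M|_{Q'}=\omega_Q^{\otimes 4}$. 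Because $\deg\omega_Q^{\otimes 2}=20>2g-2$, one has $H^1(Q',\omega_Q^{\otimes 2})=0$, and $H^1(Q',\cF|_{Q'})$ is the cokernel of the analogous map on $H^0$. The sequence $0\to\cO_{\mathbb P^2}(k-5)\to\cO_{\mathbb P^2}(k)\to\cO_Q(k)\to 0$ gives $H^0(Q,\cO_Q(k))=S^k/Q\cdot S^{k-5}$, and by Lemma~\ref{Li-Ti} together with Remark~\ref{quartic-sextic}, $L_i|_{Q'}$ corresponds under this identification to the polar quartic $Q_{x_i}$ (because the sextic $T_i$ restricts to $Q_{x_i}$ after stripping off the divisor $2D=C\cdot Q$). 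Thus the map on $H^0$ is $(G_0,G_1,G_2)\mapsto\sum G_i Q_{x_i}$ modulo $Q\cdot S^3$; since $Q=\tfrac{1}{5}\sum x_i Q_{x_i}$ by Euler, $Q\cdot S^3\subset J_Q\cdot S^4$, and the cokernel is $S^8/J_Q^8=R^8_Q$.

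The main bookkeeping challenge will be verifying the identification of $L_i|_{Q'}$ with $Q_{x_i}$ under the Koszul isomorphism above, which is exactly the content of Lemma~\ref{Li-Ti}; once this is in hand, together with the osculation analysis needed when $D$ fails to be reduced (already worked out there), the remainder of the argument is a routine comparison of Koszul cohomologies on $\overline S$ and on $Q$, so no further input is required.
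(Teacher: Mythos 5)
Your proof is correct and follows essentially the same route as the paper: both identify $H^1(S,\mathcal F)$ and $H^1(Q',\mathcal F|_{Q'})$ as cokernels of the multiplication maps by $(L_0,L_1,L_2)$ on global sections and then match these cokernels with $R^4_F$ and $R^8_Q$ using the restriction of the Jacobian ideal to $S$ and Lemma~\ref{Li-Ti}. The only cosmetic difference is in the first isomorphism, where the paper concludes via the dimension count $\dim H^1(S,\mathcal F)=5=\dim R^4_F$ rather than your direct Koszul computation on $\overline S$ (which instead requires the extra vanishing $H^1(S,\mathcal O_S(2M))=0$ that you correctly supply).
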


\begin{proof}
The image of the map $f\colon H^0(S,{\mathcal O}_S(2M))^{\oplus 3} \rightarrow H^0(S,\mathcal O_S(4M))$ consists of those elements that can be written as $\sum_{i=0}^2 S_i L_i$, where $S_i \in H^0(S,\mathcal O_S(2M))$. So, by construction, it is the image of the restriction of $J^4_F$ to the surface $S$. This gives a map $R^4_F \rightarrow H^1(S, {\mathcal F})$ as follows: 
\begin{equation}
\xymatrix@R=0.8cm@C=0.8cm{
0\ar[r]& J^4_F\ar@{->>}[d]\ar[r] & H^0({\mathbb P}^4, \mathcal O_{\mathbb P^4}(4)) \ar@{->>}[d] \ar[r] & \ar[d] R^4_F \ar[r] & 0 \\
0 \ar[r]&\Im(f) \ar[r] & H^0(S,\mathcal O_S(4M)) \ar[r] & H^1(S, \mathcal F) \ar[r] & 0\rlap{.} 
    }      
\end{equation}

Thus the map $R^4_F \rightarrow H^1(S, {\mathcal F})$ is surjective, and since $\dim H^1(S, \mathcal F) =5 = \dim R^4_F$, it is an isomorphism. 

The isomorphism $H^1(Q',{\mathcal F}|_{Q'})\cong R^8_{Q}$ follows by observing that the map
$$
H^0(Q',\omega_{Q'}^{\otimes 2})^{\oplus 3} \cong H^0({\mathbb P}^2, {\mathcal O}_{{\mathbb P}^2}(4))^{\oplus 3} \longrightarrow H^0(Q',\omega_{Q'}^{\otimes 4}) \cong H^0(Q, {\mathcal O}_Q(8))
$$ is the map
$
(A_0, A_1, A_2) \longmapsto \sum_{i=0}^2 (A_i Q_{x_i})|_{Q}
$;
hence its image is $J^8_Q/S^3 \cdot Q$. 
\end{proof}

\begin{prop}
\label{sollevamento}
The image of the map $f\colon H^0(S,{\mathcal O}_S(2M))^{\oplus 3}\rightarrow H^0(S,\mathcal O_S(4M))$ contains the image of $H^0(S,\mathcal O_S(4M-Q')) $ in $H^0(S,\mathcal O_S(4M))$. 
\end{prop}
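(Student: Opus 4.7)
The plan is to pull everything back to $\mathbb{P}^2$ via the blow-up $\varepsilon\colon S\to\mathbb{P}^2$ and exploit the Euler-type identity $\sum_i x_i L_i = 3CQ$ of equation \eqref{euler}. I treat the generic case where $C$ is a smooth conic and $D = p_1+\cdots+p_5$ is reduced; the remaining configurations permitted by the standing hypothesis of Section~\ref{sec5} should follow by specialisation or a similar case-by-case analysis. Using $M \sim 3H-E$ and $Q' \sim 5H-E$, the relevant spaces identify with
$$H^0(S,\mathcal O_S(2M)) = H^0(\mathbb P^2,\mathcal I_D^2(6)),\quad H^0(S,\mathcal O_S(4M-Q'))=H^0(\mathbb P^2,\mathcal I_D^3(7)),$$
multiplication by the equation of $Q'$ becomes multiplication by the quintic $Q$, and $L_i$ corresponds to the plane sextic $T_i=CQ_{x_i}-QC_{x_i}$ by Lemma~\ref{Li-Ti}. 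The assertion therefore reduces to showing: for every $G\in H^0(\mathbb P^2,\mathcal I_D^3(7))$, the product $QG$ is expressible as $\sum_{i=0}^{2} S_i L_i$ with $S_i\in H^0(\mathbb P^2,\mathcal I_D^2(6))$.

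The key geometric step is the inclusion $H^0(\mathbb P^2,\mathcal I_D^3(7)) \subset C\cdot H^0(\mathbb P^2,\mathcal I_D^2(5))$. Given $G\in\mathcal I_D^3(7)$, the restriction $G|_C$ is a section of $\mathcal O_C(7)$, a line bundle of degree $14$ on $C\cong\mathbb P^1$, which must vanish to order at least $3$ at each of the five points $p_i\in D$. Since $3\cdot 5 = 15 > 14$, we conclude $G|_C = 0$, i.e.\ $C \mid G$; writing $G = CH$ with $\deg H=5$, the vanishing of $C$ to order $1$ at each $p_i$ forces $H\in\mathcal I_D^2(5)$.

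Substituting $G = CH$ and applying \eqref{euler} yields the one-line computation
$$QG = QCH = \tfrac{1}{3}H\sum_{i=0}^{2} x_i L_i = \sum_{i=0}^{2} \tfrac{x_i H}{3}\cdot L_i,$$
and each coefficient $x_iH/3$ lies in $\mathcal I_D^2(6) = H^0(S,\mathcal O_S(2M))$ since $H \in \mathcal I_D^2(5)$. This concludes the proof in the generic case.

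The main obstacle is extending the argument to the non-generic situations allowed by the standing hypothesis, namely when $C$ degenerates to a pair of lines or $D$ contains points of intersection multiplicity up to $4$: the clean degree count $15 > 14$ breaks when $C$ is reducible (it only yields divisibility by one of the two lines), and the meaning of $\mathcal I_D^3(7)$ needs care in the presence of infinitely near points. A workable strategy is to establish analogous Euler-type identities for $\ell_1Q$ and $\ell_2Q$ separately in the case $C=\ell_1\ell_2$ (via the decomposition $L_i = \ell_2 M_i - \ell_1Q\,\ell_{2,x_i}$ with $M_i = \ell_1 Q_{x_i} - Q\ell_{1,x_i}$, satisfying $\sum x_i M_i = 4\ell_1 Q$), combined with a semicontinuity argument that reduces the remaining cases to the generic one.
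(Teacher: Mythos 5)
Your argument is essentially the paper's: both proofs reduce the statement to the divisibility of every section of $\mathcal O_S(4M-Q')=\mathcal O_S(7H-3E)$ by the conic, and then conclude with the Euler identity \eqref{euler}. The only difference is in how the divisibility is justified — the paper observes that multiplication by $C'$ maps $H^0(S,\mathcal O_S(5H-2E))$ onto $H^0(S,\mathcal O_S(7H-3E))$ because both spaces have dimension $6$, whereas you restrict to $C$ and use $15>14$; either works. The ``main obstacle'' you flag is not a real one under the standing hypothesis of Section~\ref{sec5}: if $C=\ell_1\ell_2$ were reducible, the part of $\ell_1\cdot Q$ supported away from the node would have to consist of even multiplicities (it agrees with the corresponding part of $2D$), which is impossible for a divisor of odd degree $5$ unless $Q$ passes through the node of $C$ — and that is exactly what the hypothesis excludes, so $C$ is in fact smooth there. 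For non-reduced $D$ your degree count survives verbatim on the blow-up: a point of $D$ of multiplicity $2$ imposes a triple point both at $p_i$ and at the infinitely near point lying on $C'$, so $G|_C$ vanishes to order at least $6$ there and the total is still $3\deg D=15>14$; the quotient $G/C$ then has multiplicity at least $2$ at each of the five (possibly infinitely near) points, so $x_i\,(G/C)\in H^0(S,\mathcal O_S(6H-2E))=H^0(S,\mathcal O_S(2M))$ as required.
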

\begin{proof}

  First notice that the image of $H^0(S, \mathcal O_S(4M-Q')) =  H^0(S, \mathcal O_S(7H-3E))$ in $ H^0(S,\mathcal O_S(4M))$ has dimension $6$ and  is isomorphic to $H^0(S, \mathcal O_S(5H-2E))$ via the multiplication by $C' Q'$, where $C' $ is the strict transform of $C$ in $S$. By  \eqref{euler},  $C' Q'$ is in the image of the map
\[  
  H^0(S,{\mathcal O}_S(H))^{\oplus 3} \xrightarrow{(L_0,L_1,L_2)} H^0(S,{\mathcal O}_S(7H-2E)). 
\]

So the subspace of  $H^0(S,\mathcal O_S(4M))$ given by $C' Q' H^0(S, \mathcal O_S(5H-2E)) $ is in the image of the map $f\colon H^0(S,{\mathcal O}_S(2M))^{\oplus 3} \rightarrow H^0(S,\mathcal O_S(4M))$. 
\end{proof}

\begin{proof}[Proof of Proposition~\ref{lift}]
Set 
\[
\begin{aligned}
&\tilde{J}^4_F := \Im(f)\cong H^0\left(S,{\mathcal O}_S(2M)\right)^{\oplus 3}/H^0(S,{\mathcal F})  \,\text{ and }\,\\
& \tilde{J}^8_Q :=\Im(\overline f) \cong H^0\left(Q,\omega_Q^{\otimes 2}\right)^{\oplus 3} / H^0\left(Q,{\mathcal F}|_{Q}\right) \cong J^8_Q/H^0\left(Q,{\mathcal I}_Q(8)\right).
\end{aligned}
\]

By Propositions~\ref{sollevamento} and~\ref{r4-r8},  diagram \eqref{diagrammone} yields the following diagram: 
\begin{equation}
\label{diagrammone2}
\xymatrix@R=0.8cm@C=0.8cm{
 \tilde{J}^4_F/H^0(S,\mathcal O_S(4M-Q'))  \ar@{^{(}->}[d] \ar@{^{(}->}[r] & 
  H^0(S,\mathcal O_S(4M))/ H^0(S,\mathcal O_S(4M-Q'))\ar[d]^{\cong} \ar@{->>}[r] & R^4_F \ar@{->>}[d]\\
 \tilde{J}^8_Q\ar@{^{(}->}[r] &  H^0\left(Q',\omega_{Q'}^{\otimes 4}\right) \ar@{->>}[r]&  R^8_{Q}.
  }      
  \end{equation}

 Hence we have a map  $H^0(Q',\omega_{Q'}^{\otimes 4}) \cong H^0(S,\mathcal O_S(4M))/ H^0(S,\mathcal O_S(4M-Q')) \rightarrow R^4_F$ lifting the map $\tau$. 
\end{proof}

\section{The second fundamental form in the locus of cubic threefolds}\label{sec6}

We would like to study some properties of the second fundamental form of the moduli space of cubic threefolds embedded in ${\mathcal A}_5$ via the period map. 

We have the cotangent exact sequence
\[0 \longrightarrow {N_{\cC\slash \cA_5}^*} \longrightarrow {\Omega^1_{\cA_5}}\vert_{\cC} \stackrel{q}\longrightarrow \Omega^1_{\cC} \longrightarrow 0.
\]

We consider the (orbifold) metric on ${\cA_5}$ induced by the Siegel metric and the corresponding Chern connection $\nabla$, and the second fundamental form 
\[
II_{\cC \slash \cA_5}\colon   {N_{\cC\slash \cA_5}^*} \longrightarrow \Sym^2 \Omega^1_{\cC},
\]
\[
II_{\cC \slash \cA_5} = (q \otimes Id) \circ \nabla|_{ {N_{\cC\slash \cA_5}^*}}.
\]

\begin{rem}
\label{totgeo}

It is well known that ${\mathcal C}$ is not totally geodesic in ${\mathcal A}_5$ (equivalently, the second fundamental form $II_{\cC \slash \cA_5}$ is not zero). In fact, the monodromy group of the family of cubic threefolds tensored with ${\mathbb Q}$ is the whole symplectic group (see \cite[Theorem~4]{beauville_mono}, see also \cite{huybrechts_notes}). 

There are totally geodesic (indeed special) subvarieties of  ${\mathcal A}_5$ contained in $\overline{\mathcal C}$ given by the intermediate Jacobians of those cubic threefolds that are cyclic triple covers of ${\mathbb P}^3$ ramified on a cubic surface (\cite{act}, see also \cite{mt}). In this way Allcock, Carlson and Toledo proved that one can embed the moduli space of cubic surfaces in $\overline{\mathcal C}$ and showed that its closure is a ball quotient. 

A similar picture allowed them 
 to define a different period map for cubic threefolds which takes values in a ball quotient of dimension $10$ (see \cite{act1}). This is done using cubic fourfolds that are triple cyclic covers of ${\mathbb P}^4$ ramified over cubic threefolds and the Torelli theorem proved by Voisin in \cite{vo_invent}.

\end{rem}

To study $II_{\cC \slash \cA_5}$, we take its  pullback via the Prym map. 

Consider again diagram \eqref{diagrama1}. We set ${\mathcal H} :=  \Ker( {\Omega^1_{\cR_6}}|_{\mathcal {RQ}^-}    \rightarrow {T_F}^*) $ and $\pi:= (dP)^*\colon  P^*{\Omega^1_{\cA_5}}\vert_{\cC}  \rightarrow {\mathcal H}$. Finally, notice that $P^* \Omega^1_{\cC}  \cong \Ker(  \Omega^1_{\mathcal {RQ}^-} \rightarrow    {T_F}^*     )$. So we have the following diagram:
\begin{equation}\label{diagrama3}
\xymatrix@R=0.8cm@C=0.8cm{
              &  0 \ar[d] & 0 \ar[d] && \\
    &  \mathcal V^* \ar@{=}[r] \ar[d] & \mathcal{V}^* \ar[d] & 0\ar[d]& \\
      0 \ar[r] &        P^*{N_{\cC\slash \cA_5}^*} \ar[r] \ar[d] & P^*{\Omega^1_{\cA_5}}\vert_{\cC}  \ar[d]^{\pi} \ar[r]^{q_1} & P^* \Omega^1_{\cC} \ar[r] \ar@{=}[d]& 0
    \\
   0 \ar[r] & N_{\mathcal {RQ}^-}^*\vert_{\cR_6} \ar[r] \ar[d]  &{\mathcal H} \ar[d] \ar[r]^{q_2} &  P^* \Omega^1_{\cC}  \ar[r] \ar[d]& 0 
        \\
    &0&0&0.&&
    }      
\end{equation}
We consider the metric on $ P^*{\Omega^1_{\cA_5}}\vert_{\cC}$ induced by the Siegel metric which is degenerate on the tangent directions to the Fano surface, and the corresponding Chern connection that we still denote by $\nabla$. So we have an induced  metric on all the vector bundles of diagram \eqref {diagrama3} and the corresponding Chern connections. 

The pullback to $\mathcal {RQ}^-$ of the second fundamental form of the embedding of ${\cC}$ in ${\cA_5}$ is the second fundamental form associated with the first horizontal exact sequence: 
$$
II = (q_1 \otimes \Id) \circ \nabla\colon  P^*{N_{\cC\slash \cA_5}^*} \longrightarrow P^* \Omega^1_{\cC} \otimes  \Omega^1_{\mathcal {RQ}^-}. 
$$
\begin{rem}
\label{rem1}
The second fundamental form $II$ is the composition of the pullback $P^*II_{\cC \slash \cA_5}\colon P^*{N_{\cC\slash \cA_5}^*} \rightarrow \Sym^2 P^*\Omega^1_{\cC}$ with the map $\Sym^2 P^*\Omega^1_{\cC} \rightarrow  P^* \Omega^1_{\cC} \otimes  \Omega^1_{\mathcal {RQ}^-}$ induced by the inclusion $P^* \Omega^1_{\cC} \subset \Omega^1_{\mathcal {RQ}^-}$.
\end{rem}

We consider the second fundamental form  of the second vertical exact sequence: 
$$\widetilde{II} = (\pi \otimes \Id) \circ \nabla\colon {\mathcal V}^* \longrightarrow {\mathcal H} \otimes  \Omega^1_{\mathcal {RQ}^-} \subset {\Omega^1_{\cR_6}}|_{\mathcal {RQ}^-}  \otimes  \Omega^1_{\mathcal {RQ}^-} .$$

Notice that 
\begin{equation}\label{2ff}
II|_{{\mathcal V}^*} = (q_2 \otimes \Id) \circ \widetilde{II}\colon  {\mathcal V}^* \longrightarrow P^* \Omega^1_{\cC}  \otimes  \Omega^1_{\mathcal {RQ}^-}, 
\end{equation}
and we have 
\begin{equation}
\label{vstar}
    ({{\mathcal V}^*})_{Q_l} = I_2(\omega_{Q_l}\otimes \alpha).
\end{equation}

We will now recall the definition of Hodge-Gaussian maps introduced in  \cite[Theorem~4.4]{cpt} in the case of a smooth curve $Q$ together with a degree zero line bundle $\alpha$. 

Given a holomorphic line bundle $\alpha$ of degree zero on a curve $Q$, there exist a unique (up to a constant) hermitian metric $H$ on $\alpha$ and a
unique connection $D_H$ on $\alpha$, compatible with the holomorphic structure and the metric, which is flat. So we have $D_H = D'_H + \overline{\partial}$, where $D'_H$ is the $(1,0)$ component. 

Given a line bundle $L$ on $Q$, the Hodge-Gaussian map is a map 
\begin{align*}
\rho\colon I_2(L) \lra & H^0(Q, L \otimes \omega_Q \otimes \alpha^{-1}) \otimes H^0(Q, L \otimes \omega_Q \otimes \alpha)\\
&\cong \Hom(H^1(Q, L^{-1}  \otimes \alpha), H^0(Q, L \otimes \omega_Q \otimes \alpha)),
\end{align*}
defined as follows: 

We have the Dolbeault isomorphism $H^1(Q, L^{-1}  \otimes \alpha) \cong H^1_{\overline{\partial}}(Q, L^{-1}  \otimes \alpha) $. For any section $ s \in H^0(Q, L)$ and for any element $v \in H^1_{\overline{\partial}}(Q, L^{-1}  \otimes \alpha) $, take a Dolbeault representative $\theta$ of $v$, and consider the contraction $\theta s \in {\mathcal A}^1(\alpha)$. This is a $\overline{\partial}$-closed form with value in $\alpha$. So we have a decomposition 
$$\theta s = \gamma + \overline{\partial}h,$$
where $\gamma$ is the harmonic representative and $h$ is a ${\mathcal C}^{\infty} $ section of $\alpha$.

Take a basis $\{s_1,\ldots, s_n\}$ of $H^0(L)$, so that $\theta s_i = \gamma_i + \overline{\partial}h_i$ as above. 
A quadric $ \Gamma \in I_2(L)$ can be written as $\Gamma = \sum_{i,j=1}^n a_{ij} s_i \otimes s_j$, where $a_{i,j} = a_{j,i}$. If we choose local coordinates such that $s_i = f_i(z) l$, where $l$ is a local section of $L$, we have $\sum_{i,j=1}^n a_{ij} f_i(z) f_j(z) l^2\equiv 0$.

In \cite{cpt} it is shown that $\sum_{i,j=1}^n a_{ij} s_i D'_Hh_j$ is $\overline{\partial}$-closed, and $\rho$ is defined as follows:
\begin{equation}
\label{h-g}
\rho(\Gamma)(v) = \sum_{i,j=1}^n a_{ij} s_i D'_Hh_j \in H^0(Q, L \otimes \omega_Q \otimes \alpha).
\end{equation}
In  \cite[Theorem~4.5]{cpt} it is proven that the composition
\begin{equation}
\label{rho-mu2}
I_2(L) \stackrel{\rho}\lra H^0(Q, L \otimes \omega_Q \otimes \alpha^{-1}) \otimes H^0(Q, L \otimes \omega_Q \otimes \alpha) \stackrel{m} \lra H^0\left(Q,L^{\otimes 2} \otimes \omega_Q^{\otimes 2}\right) 
\end{equation}
equals the second Gaussian map  $\mu_{2, L}$ defined in \eqref{mu2L}, up to a constant. 

In \cite{cf-advances} this construction is used in the case where $Q$ is a smooth curve endowed with the $2$-torsion line bundle $\alpha$ and $L = \omega_Q \otimes \alpha $. Hence 
$$
\rho\colon I_2(\omega_Q \otimes \alpha) \lra H^0\left(Q,  \omega_Q^{\otimes 2}\right) \otimes H^0\left(Q,  \omega_Q^{\otimes 2}\right).
$$

If $g \geq 7$, the Prym map ${\mathcal R}_g \rightarrow {\mathcal A}_{g-1}$ is generically an immersion. At a point $(Q, \alpha)$ where it is an immersion, the conormal bundle  $N_{{\mathcal R}_g /\cA_{g-1}}^*$ is identified with $I_2(\omega_Q \otimes \alpha)$. 

In the proof of \cite[Theorem 2.1]{cf-advances}, it is shown that under these assumptions, the second fundamental form of the Prym map at $(Q, \alpha)$ equals the Hodge-Gaussian map $\rho$ (up to a constant).

For $g =6$, the Prym map is generically finite, so $I_2(\omega_Q \otimes \alpha) =0$ generically.

Nevertheless, we can use this construction in the case where $Q$ is a smooth plane quintic endowed with the $2$-torsion line bundle $\alpha$ since in this case, $I_2(\omega_Q \otimes \alpha)$ has dimension $2$ (see Proposition~\ref{basic_properties}).  So we have 
$$
\rho\colon I_2(\omega_Q \otimes \alpha) \lra H^0\left(Q,  \omega_Q^{\otimes 2}\right) \otimes H^0\left(Q,  \omega_Q^{\otimes 2}\right)
$$
and, up to a constant, $\mu_2 = m \circ \rho$. 

Since $ ({{\mathcal V}^*})_{Q} = I_2(\omega_{Q}\otimes \alpha)$ (see \eqref{vstar}),  varying the pair $(Q, \alpha) \in {\mathcal {RQ}^-}$, we can see $\rho$ as a map 
$$\rho\colon {\mathcal V}^* \longrightarrow  {\Omega^1_{\cR_6}}|_{\mathcal {RQ}^-}  \otimes   {\Omega^1_{\cR_6}}|_{\mathcal {RQ}^-}.$$

Denote by $t\colon H^0(Q,\omega_Q^{\otimes 2}) \cong H^0(Q,{\mathcal O}_Q(4)) = S^4 \rightarrow R^4_Q $ the natural projection and  by $g\colon R^4_Q  \otimes H^0(Q,\omega_Q^{\otimes 2})$ $\rightarrow \Sym^2 R^4_Q$ the projection induced by $t$.

\begin{thm}\label{rho}
The map  
$\rho\colon  {\mathcal V}^* \rightarrow  {\Omega^1_{\cR_6}}|_{\mathcal {RQ}^-}  \otimes   {\Omega^1_{\cR_6}}|_{\mathcal {RQ}^-}$
is a lifting of\, $\widetilde{II}$. At a point $(Q, \alpha) \in {\mathcal {RQ}^-}$, we have the commutative diagram
$$
\xymatrix@R=0.8cm@C=0.8cm{
                         I_2(\omega_{Q}\otimes \alpha) \ar[r]^{\rho \quad \quad \quad } \ar@{=} [d]    & H^0\left(Q,\omega_Q^{\otimes 2}\right) \otimes H^0\left(Q,\omega_Q^{\otimes 2}\right)  \ar[d]^{t \otimes \Id} 
    \\I_2(\omega_{Q}\otimes \alpha)\ar[r]^{\widetilde{II}}  \ar[d]^{(II_{\cC \slash \cA_5})_{\vert I_2(\omega_{Q}\otimes \alpha) }}  &R^4_Q  \otimes H^0\left(Q,\omega_Q^{\otimes 2}\right) \ar[d]^{g} \\
    \Sym^2R^2_F \ar@{^{(}->}[r]^{\iota}  &\Sym^2 R^4_{Q}.  
}
$$
 
\end{thm}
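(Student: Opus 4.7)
The plan is to reduce to the construction of \cite[Theorem~2.1]{cf-advances}, where an analogous computation of the second fundamental form of the Prym map $P_g\colon \cR_g\to \cA_{g-1}$, for $g\geq 7$, is carried out via the Hodge-Gaussian formula~\eqref{h-g}. In that range $P_g$ is generically an immersion, so $N^*_{\cR_g/\cA_{g-1}}$ is identified fiberwise with $I_2(\omega_Q\otimes\alpha)$, and the second fundamental form at $(Q,\alpha)$ is shown to equal $\rho$ up to a constant. The computation is entirely local: for $\Gamma=\sum a_{ij}\,s_i\otimes s_j \in I_2(\omega_Q\otimes\alpha)$, one takes Dolbeault representatives $\theta$ of classes in $H^1(Q,(\omega_Q\otimes\alpha)^{-1})$, writes $\theta s_j=\gamma_j+\overline{\partial}h_j$ using the flat unitary metric on $\alpha$, and computes the Chern connection of the Siegel metric applied to $\Gamma$ to arrive at~\eqref{h-g}.

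I would transplant this calculation to the present setting. Although $P_6$ is generically finite on $\cR_6$, along $\mathcal{RQ}^-$ the $2$-dimensional subbundle $\mathcal{V}^*\subset P^*N^*_{\cC/\cA_5}$ has fiber at $(Q_l,\alpha_l)$ equal to $I_2(\omega_{Q_l}\otimes\alpha_l)$: by Proposition~\ref{basic_properties}\eqref{bp-7}, these quadrics are the polars with respect to $X$ of points of $l$, hence genuinely annihilate $\Im(dP_6)_{(Q,\alpha)}$ through the multiplication map $\Sym^2 H^0(Q,\omega_Q\otimes\alpha)\to H^0(Q,\omega_Q^{\otimes 2})$. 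The local harmonic-projection calculation of \cite{cf-advances} depends only on the Hodge structure of $(Q,\alpha)$ and on this annihilation property, both satisfied here, so for $\Gamma\in \mathcal{V}^*_{(Q,\alpha)}$ the formula~\eqref{h-g} produces $\rho(\Gamma)\in H^0(Q,\omega_Q^{\otimes 2})\otimes H^0(Q,\omega_Q^{\otimes 2})$; projecting under $t\otimes\Id$ to $R^4_Q\otimes H^0(Q,\omega_Q^{\otimes 2})$ recovers $\widetilde{II}(\Gamma)$. This yields the upper commutative square.

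For the lower square I would invoke \eqref{2ff} together with Remark~\ref{rem1}: on one hand $II|_{\mathcal{V}^*}=(q_2\otimes\Id)\circ\widetilde{II}$, and on the other $II=P^*II_{\cC/\cA_5}$ composed with the natural inclusion $\Sym^2 P^*\Omega^1_{\cC}\hookrightarrow P^*\Omega^1_{\cC}\otimes\Omega^1_{\mathcal{RQ}^-}$. Tracing through the identifications in~\eqref{diagramma3}, the map $q_2$ at $(Q,\alpha)$ is the restriction of $t\colon S^4\twoheadrightarrow R^4_Q$ to $\mathcal{H}_{(Q,\alpha)}$, with image $R^2_F\subset R^4_Q$. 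Therefore $g\circ\widetilde{II}$ agrees with $\iota\circ II_{\cC/\cA_5}|_{I_2(\omega_Q\otimes\alpha)}$, the symmetry of $II$ as a bilinear form on $\mathcal{V}^*$ ensuring that the apparent asymmetry in the order of the tensor factors is immaterial after projecting to $\Sym^2 R^4_Q$.

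The principal technical obstacle is to verify that the local calculation of \cite[Theorem~2.1]{cf-advances} transfers verbatim to the case when $P_6$ fails to be an immersion. The essential point is that the Siegel-metric Chern connection applied to a covector that annihilates $\Im(dP_6)$ is determined entirely by Hodge-theoretic data on $(Q,\alpha)$, and the identification of $\mathcal{V}^*$ with $I_2(\omega_Q\otimes\alpha)$ via polar quadrics remains valid along $\mathcal{RQ}^-$ even though it would be trivial at a generic point of $\cR_6$.
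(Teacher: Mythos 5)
Your proposal is correct and follows essentially the same route as the paper: the authors likewise observe that the harmonic-projection computation of \cite[Theorem~2.1]{cf-advances} applies verbatim to $\Gamma\in I_2(\omega_Q\otimes\alpha)=\mathcal{V}^*_{(Q,\alpha)}$ (which is nonzero here precisely because these quadrics are the polars of points of $l$), yielding $(t\otimes\Id)\circ\rho=\widetilde{II}$, and they deduce the lower square from \eqref{2ff} and Remark~\ref{rem1} exactly as you do. The only difference is presentational: the paper spells out the flat-connection expansion $\sigma_t^{*}(\omega(t))=\omega+(\beta+D_Hh)t+o(t)$ for the reader's convenience rather than merely citing it.
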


\begin{proof}
Notice that for all $Q$, we have $\widetilde{II} \colon  I_2(\omega_{Q}\otimes \alpha) \rightarrow \Hom({T_{\mathcal {RQ}^-,Q}}, H^0(Q, \omega_Q^{\otimes 2}))$. 
Then the proof of \cite[Theorem 2.1]{cf-advances} applies verbatim; hence for any $ v \in {T_{\mathcal {RQ}^-,Q}} \subset H^1(Q,T_Q)$ and for any $ \Gamma \in I_2( \omega_Q \otimes \alpha)$, we have (up to a constant)
$\rho(\Gamma) (v) =  \widetilde{II}(\Gamma)(v) \in H^0(Q,\omega_Q^{\otimes 2})$.  

For the reader's convenience, we repeat the argument.  

First of all, we take $v\in {T_{\mathcal {RQ}^-,Q}} \subset H^1(Q,T_Q)$, and we compute $II(\Gamma)(v)$ for every $\Gamma\in I_2(\omega_Q \otimes \alpha)$. Using the Kodaira Spencer map $k$, we can assume  $v=k\left(\frac{\partial}{\partial t}\right)$, where $t$ is the local coordinate of the unit disc $\Delta= \{|t|<1\}$ parametrizing a $1$-dimensional deformation $p\colon {\mathcal X} \rightarrow \Delta$, where $(Q,\alpha) = p^{-1}(0)$. Take a ${\mathcal C}^{\infty}$ lifting $Z$ of the holomorphic vector field $\frac{\partial}{\partial t}$ on $\Delta$, so that we have a ${\mathcal C}^{\infty}$ trivialization $\sigma\colon \Delta \times (Q,\alpha) \rightarrow {\mathcal X}$, $\sigma(t,x) := \Phi_{tZ}(1)$, where $\Phi_Z(t)$ is the flow of the vector field $Z$. Then $\theta:= \overline{\partial} Z|_{(Q,\alpha)}$ is a closed form in ${\mathcal A}^{0,1}(T_Q)$ such that $ [\theta]=v\in H^1(Q,T_Q)$. Denote by $(Q_t, \alpha_t)$ the fiber of $p$ over $t$, where $\alpha_t$ is a holomorphic line bundle in $\Pic^0(Q_t)[2]$ endowed with the flat structure induced by the double covering $\pi_t\colon \tilde{Q_t} \rightarrow Q_t$. We denote by $H_t$ the flat hermitian metric and by ${D_{H_t}} = D'_{H_t}+ \overline{\partial}_t$ the flat Chern connection.

Take a local section $\Omega$ of the bundle $p_*(\omega_{{\mathcal X}/\Delta} \otimes {\mathcal P})$, where $\mathcal P$ is the Prym bundle: ${\mathcal P}_t = \alpha_t$ on $Q_t$. For all $t \in \Delta$, $\omega(t) \in H^0(\omega_{Q_t} \otimes \alpha_t) \cong H^{1,0}(\alpha_t)$.  The vector bundle $P^*{\Omega^1_{\cA_5}}\vert_{\cC}$ on $\Delta$ is identified with $\Sym^2 p_*(\omega_{{\mathcal X}/\Delta} \otimes {\mathcal P})$; hence a local section of ${\mathcal V}^*$ at $t$ is a quadric $\tilde{\Gamma}(t) = \sum_{i,j} a_{i,j}(t) (\omega_i(t) \otimes \omega_j(t))$ such that $a_{i,j}(t) = a_{j,i}(t)$ for all $ i,j$ and such that $\sum_{i,j} a_{i,j}(t) \omega_i(t)\omega_j(t) \equiv 0$.

We now describe the computation of $\widetilde{II}$ at $\widetilde{\Gamma}(0) = \Gamma = \sum_{i,j} a_{i,j} (\omega_i \otimes \omega_j)$ along the tangent direction $v$.

Denote by $\sigma_t\colon Q \rightarrow Q_t$  the diffeomorphisms induced by $\sigma$, so we have a  map induced by pullback $\sigma_t^{*}\colon {\mathcal A}^1(\alpha_t) \rightarrow {\mathcal A}^1(\alpha)$. Since $\omega(t) \in {\mathcal A}^{1,0}(\alpha_t)$ is $
D_{H_t}$-closed, then  $\sigma_t^{*}(\omega(t))$ is also $D_H$-closed because we have $\sigma_t^{*}(D_{H_t}) = D_H$.

So $\sigma_t^{*}(\omega(t))$  has a power series expansion at $t=0$
$$
\sigma_t^{*}(\omega(t))= \omega + (\beta + D_H h)t +o(t),
$$
where $\omega := \omega(0)$, $\beta \in {\mathcal A}^1(\alpha)$ is harmonic and
$h$ is a ${\mathcal C}^{\infty}$ section of $\alpha$ (by the harmonic
decomposition for $D_H$). So we have
$\nabla^{GM}_{\frac{\partial}{\partial t}}[\omega(t)]_{t=0} =
[\beta]$, $\theta \cdot \omega = \beta^{0,1} +
\overline{\partial}h$, thus $k\left(\frac{\partial}{\partial t}\right) \cdot
[\omega] = [ \beta^{0,1}]$, where $  \beta^{0,1}$ is the $(0,1)$
component of $\beta$. Here we denote by ${\nabla}^{GM}$ the Gauss--Manin connection. 

Then for all $i$, we have
$\nabla^{1,0}_{\frac{\partial}{\partial t}}[\omega_i(t)]_{t=0} =
[{\beta_i}^{1,0}]$, so 
$$
\widetilde{II}(\Gamma)(v) = m \left(\nabla_{\frac{\partial}{\partial t}} \tilde{\Gamma}|_{t=0}\right) =
\sum_{i,j} a'_{i,j}(0) \omega_i \omega_j + 2 \sum_{i,j} a_{i,j}
\beta^{1,0}_i \omega_j.$$ 
Since $\sum_{i,j} a_{i,j}(t) \omega_i(t)
\omega_j(t) \equiv 0$,  its derivative with respect to $t$ at
$t=0$ must be zero; \textit{i.e.}, $2\sum_{i,j} a_{i,j}(\beta_i + D_H h_i)
\omega_j + \sum_{i,j} a'_{i,j}(0) \omega_i \omega_j \equiv 0$. Thus, 
if we take the $(1,0)$ part, we have $2\sum_{i,j}
a_{i,j}(\beta^{1,0}_i + D'_H h_i) \omega_j + \sum_{i,j} a'_{i,j}
\omega_i \omega_j \equiv 0$ and therefore $\widetilde{II}(\Gamma)(v) = -2 \sum_{i,j} a_{i,j}
\omega_j D'_H h_i. $ Now we conclude by \eqref{h-g}. 

Finally, the bottom part of the diagram follows from \eqref{2ff} and Remark~\ref{rem1}. 

\end{proof}

\begin{rem}
\label{rho+}
Notice that since the map $\iota$ in the diagram of Theorem~\ref{rho} is injective, the Hodge-Gaussian map $\rho$ determines the restriction of the second fundamental form to $I_2(\omega_{Q}\otimes \alpha)$. 

Moreover, the $2$-dimensional spaces $I_2(\omega_{Q_{l}} \otimes \alpha) \subset J^2_F$ parametrize the polars of the points in $l$; hence when we vary $l$ in the Fano surface, these subspaces  generate $J^2_F$. 

Therefore, the maps $\rho$ completely determine the second fundamental form $II_{\cC \slash \cA_5}$. On the other hand, since $II_{\cC \slash \cA_5}$ is not zero, for  generic $l$, the composition of $\rho$ with the projection $H^0(Q,\omega_Q^{\otimes 2}) \otimes H^0(Q,\omega_Q^{\otimes 2}) \rightarrow \Sym^2 R^4_{Q}$ is not zero, and its image is contained in that of $\iota\colon\Sym^2R^2_F  \hookrightarrow \Sym^2 R^4_{Q}$. 
\end{rem}

The following result shows that the image of $\rho$ is contained in a smaller subspace. This will be crucial further on.

Consider the multiplication map
$
m_{\alpha }\colon \Sym^2H^0(Q,\omega_Q \otimes \alpha)\rightarrow H^0(Q,\omega _Q^{\otimes 2}),
$
and define 
\[
\beta_{\alpha } =m_{\alpha }\otimes m_{\alpha }\colon
\Sym^2H^0(Q,\omega_Q \otimes \alpha)\otimes \Sym^2H^0(Q,\omega_Q \otimes \alpha)\longrightarrow 
H^0\left(Q,\omega _Q^{\otimes 2}\right)\otimes H^0\left(Q,\omega _Q^{\otimes 2}\right).
\]
\begin{prop}\label{tau_primo}
The image of $\rho $ is contained in the image of $\beta _{\alpha}$. In particular, there exists a well-defined map $\tau'\colon \Im(\beta_{\alpha })\rightarrow \Sym^2 R^2_F$.
\end{prop}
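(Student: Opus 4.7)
The plan is to combine the explicit formula for $\rho$ derived in the proof of Theorem~\ref{rho} with a symmetry argument in order to locate $\Im(\rho)$ inside $\Im(\beta_\alpha)$, and then to descend along $\beta_\alpha$ using the inclusion $I_2(\omega_Q\otimes\alpha)\subset J^2_F$ to define $\tau'$.

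First, I would start from the expression
$$\rho(\Gamma)(v)=-\tfrac12\sum_{i,j}a'_{ij}(0)\,\omega_i\omega_j-\sum_{i,j}a_{ij}\,\beta_i^{1,0}\omega_j$$
established in the proof of Theorem~\ref{rho}, where $\{\omega_i\}$ is a basis of $H^0(Q,\omega_Q\otimes\alpha)$, the $a_{ij}(t)$ are local coefficients of an extension of $\Gamma$, and the $(1,0)$-component $\beta_i^{1,0}$ of the harmonic piece in the Kodaira--Spencer expansion is again a holomorphic section of $\omega_Q\otimes\alpha$. Every summand is therefore a product of two Prym-canonical sections, so it lies in $\Im(m_\alpha)\subset H^0(Q,\omega_Q^{\otimes 2})$. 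Identifying $H^1(L^{-1}\otimes\alpha)$ with $H^0(\omega_Q^{\otimes 2})^*$ via Serre duality, this translates into
$$\rho(\Gamma)\in H^0(Q,\omega_Q^{\otimes 2})\otimes\Im(m_\alpha).$$

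Next I would exploit that $\alpha$ is $2$-torsion: the canonical isomorphism $\alpha\otimes\alpha\cong\mathcal{O}_Q$ makes the two tensor factors $H^0(L\otimes\omega_Q\otimes\alpha^{\pm 1})=H^0(\omega_Q^{\otimes 2})$ interchangeable in the definition of $\rho$, and one may rerun the formula with the two Serre-dual factors swapped to conclude that $\rho(\Gamma)$ is symmetric, i.e.\ it lies in $\Sym^2 H^0(Q,\omega_Q^{\otimes 2})$. A general linear-algebra fact --- an element of $\Sym^2 V$ that belongs to $V\otimes V'$ also belongs to $V'\otimes V$ and hence to $V'\otimes V'$ --- applied with $V=H^0(\omega_Q^{\otimes 2})$ and $V'=\Im(m_\alpha)$ then gives
$$\rho(\Gamma)\in\Im(m_\alpha)\otimes\Im(m_\alpha)=\Im(\beta_\alpha),$$
which is the first assertion.

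To define $\tau'$, I would observe that
$$\Ker(\beta_\alpha)=I_2(\omega_Q\otimes\alpha)\otimes\Sym^2 R^1_F+\Sym^2 R^1_F\otimes I_2(\omega_Q\otimes\alpha),$$
and that $I_2(\omega_Q\otimes\alpha)\subset J^2_F$ by diagram~\eqref{diagramma3}. Hence $I_2(\omega_Q\otimes\alpha)$ maps to zero in $R^2_F=\Sym^2 R^1_F/J^2_F$, so the composition
$$\Sym^2 R^1_F\otimes\Sym^2 R^1_F\twoheadrightarrow R^2_F\otimes R^2_F\twoheadrightarrow\Sym^2 R^2_F$$
annihilates $\Ker(\beta_\alpha)$ and descends to the required map $\tau'\colon\Im(\beta_\alpha)\to\Sym^2 R^2_F$.

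The main obstacle is the symmetry step: the one-sided formula produced by the proof of Theorem~\ref{rho} is not visibly symmetric, so to conclude both $\rho(\Gamma)\in H^0\otimes\Im(m_\alpha)$ and $\rho(\Gamma)\in\Im(m_\alpha)\otimes H^0$ one must either invoke the invariance of the construction under $\alpha\leftrightarrow\alpha^{-1}$ (valid because $\alpha^2=\mathcal{O}_Q$), or redo the Hodge--theoretic computation with the Serre-dual factors exchanged; checking that this abstract exchange really implements the transposition of tensors is the delicate point.
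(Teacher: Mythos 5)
Your argument for the key containment $\Im(\rho)\subset\Im(\beta_\alpha)$ has a genuine gap. The formula $\rho(\Gamma)(v)=-\tfrac12\sum a'_{ij}(0)\,\omega_i\omega_j-\sum a_{ij}\,\beta_i^{1,0}\omega_j$ is obtained by differentiating the identity $\sum a_{ij}(t)\,\omega_i(t)\omega_j(t)\equiv 0$, so it presupposes that $\Gamma$ extends to first order as an element of $I_2(\omega_{Q_t}\otimes\alpha_t)$ along the chosen deformation. Such an extension exists only for $v$ in $T_{\mathcal {RQ}^-,Q}$ (where $\mathcal V^*$ is a rank~$2$ bundle), a $12$-dimensional subspace of the $15$-dimensional $H^1(Q,T_Q)$; for a general $v$ the Prym-canonical model of the deformed pair lies on no quadric, and in fact the obstruction to extending $\Gamma$ to first order is precisely the class of $\rho(\Gamma)(v)$ modulo $\Im(m_\alpha)$ --- so for general $v$ your use of the formula begs the question. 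What your computation actually establishes is only that $\rho(\Gamma)(v)\in\Im(m_\alpha)$ for $v\in T_{\mathcal {RQ}^-,Q}$, i.e.\ $\rho(\Gamma)\in J^4_Q\otimes H^0(Q,\omega_Q^{\otimes 2})+H^0(Q,\omega_Q^{\otimes 2})\otimes\Im(m_\alpha)$, since $J^4_Q$ is the annihilator of $T_{\mathcal {RQ}^-,Q}$. Intersecting this with its transpose (your symmetry step) and using $J^4_Q\subset\Im(m_\alpha)$ still leaves a residual symmetric piece of the form $\sum (j_k\otimes c_k+c_k\otimes j_k)$ with $j_k\in J^4_Q$ and $c_k$ in a complement of $\Im(m_\alpha)$, so the conclusion $\rho(\Gamma)\in\Im(m_\alpha)\otimes\Im(m_\alpha)$ does not follow. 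The paper closes exactly this hole with a different input: by Theorem~\ref{rho}, $(t\otimes\Id)\circ\rho=\widetilde{II}$, and $\pi_l\circ\widetilde{II}=0$ because there is no variation of periods along the fibre $F(X)$ of the Prym map; since the second column of diagram~\eqref{diagramma3} identifies $\Im(m_\alpha)$ with $\Ker\bigl(H^0(Q,\omega_Q^{\otimes 2})\to T^*_{F(X),l}\bigr)$, this constrains one entire tensor factor of $\rho(\Gamma)$ to lie in $\Im(m_\alpha)$, and then the symmetry of $\rho$ finishes. (Your concern about justifying that symmetry applies equally to the paper, which simply invokes it; it is part of the Hodge--Gaussian formalism of \cite{cpt}.)

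The second half of your proposal, the well-definedness of $\tau'$, is correct and equivalent to the paper's: your observation that $f\otimes f$ annihilates $\Ker(\beta_\alpha)=I_2(\omega_Q\otimes\alpha)\otimes\Sym^2R^1_F+\Sym^2R^1_F\otimes I_2(\omega_Q\otimes\alpha)$ because $I_2(\omega_Q\otimes\alpha)\subset J^2_F$ is the same content as the paper's remark that $f'\circ m_\alpha$ factors as $\iota\circ f$ in diagram~\eqref{diagramma3}.
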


\begin{proof}
In Theorem~\ref{rho} we have shown that $(t\otimes \Id)\circ \rho =\widetilde{II}$. Composing with the natural map $\pi_l\colon T^*_{\mathcal {RQ}^-, l}\rightarrow T_{F(X), l}^*$ (dual of the differential of the inclusion $F(X)\subset \mathcal {RQ}^-$), we have that $\pi_l \circ \widetilde {II}=0$. This is a direct consequence of the definition of $\widetilde {II}$ since there is no variation of the periods along $F(X)$. Hence we have $\pi_l\circ (t\otimes \Id)\circ \rho =0$, and the first statement follows easily using the symmetry of $\rho $.

By diagram \eqref{diagramma3}, the restriction of $f'$ to the image of $m_{\alpha} $ equals $f$ followed by the inclusion $R^2_F \subset R^4_{Q_l}$. Hence we have a well-defined map $\tau'\colon \Im(\beta_{\alpha}) \rightarrow \Sym^2 R^2_F$. 
\end{proof}

\begin{thm}\label{diagramone}
Under the assumption of Section~\ref{sec4}, the following diagram is commutative: 
  $$
\xymatrix@R=0.8cm@C=0.8cm{
 I_2(\omega_{Q}\otimes \alpha) \ar[r]^{ \qquad \rho } \ar@{=} [d] \ar@/^2pc/[rrr] ^{\mu_2} 
 &
 \Im(\beta_{\alpha}) \ar@{^{(}->}[r] \ar[dddr]_{\tau'} 
 &
 H^0\left(Q,\omega_Q^{\otimes 2}\right) \otimes H^0\left(Q,\omega_Q^{\otimes 2}\right)  \ar[d]^{t \otimes \Id} \ar[r]^{\qquad m} 
 & H^0(Q,\omega_Q^{\otimes 4}) \ar[dd]^{\tau } \ar@/^2pc/[ddd] ^{\tilde{\tau}}
  \\
  I_2(\omega_{Q}\otimes \alpha) 
  \ar@/_3pc/[ddrr]^{(II_{\cC \slash \cA_5})\vert_{I_2(\omega_{Q}\otimes \alpha) }}
  \ar[rr]^{\widetilde{II}\qquad}  
  & &
  R^4_Q  \otimes H^0\left(Q,\omega_Q^{\otimes 2}\right) \ar[d]^{f'} &&  
  \\
  && \Sym^2R^4_Q \ar[r]^{m_Q} & R^8_Q 
      \\
    &  &\Sym^2R^2_F \ar[r]^{m_F} \ar@{^{(}->}[u] &R^4_F \ar[u]^{h}}
$$
where the  maps $m, m_F, m_Q$ are the natural multiplication maps.
In particular, 
\begin{equation}\label{tautilde_mu2}
    m_F \circ \left(II_{\cC \slash \cA_5}\right)\vert_{I_2(\omega_{Q}\otimes \alpha) }= \widetilde {\tau }\circ \mu_2.
\end{equation}
\end{thm}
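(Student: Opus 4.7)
The plan is to assemble the commutativity of the diagram from results already established earlier in the paper, and then to extract the equality \eqref{tautilde_mu2} by a direct computation on generators of $\Im(\beta_\alpha)$. The top-right triangle records $\mu_2 = m\circ \rho$ from \eqref{rho-mu2} (up to a constant); the top-left triangle records the factorization $\Im(\rho)\subset \Im(\beta_\alpha)$ from Proposition~\ref{tau_primo}; the middle-left block, which expresses both $\widetilde{II}=(t\otimes \Id)\circ \rho$ and the identity $\iota\circ (II_{\cC\slash\cA_5})|_{I_2(\omega_Q\otimes\alpha)} = f'\circ\widetilde{II}$, is exactly Theorem~\ref{rho} (where the $f'$ in the present diagram plays the role of $g$ in that theorem); and the upper-right triangle $h\circ\tilde\tau=\tau$ is Proposition~\ref{lift}.

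The only square requiring a direct verification is the bottom one, $m_Q\circ\iota = h\circ m_F$. I would check this on a generator $[q_1]\odot[q_2]\in\Sym^2 R^2_F$ with $q_1,q_2\in H^0(\mathbb P^4,\mathcal O_{\mathbb P^4}(2))$: on the one hand $m_Q(\iota([q_1]\odot[q_2])) = [(q_1q_2)|_Q]\in R^8_Q$, and on the other $h(m_F([q_1]\odot[q_2])) = h([q_1q_2])$, which by Lemma~\ref{lemma_s} equals $[(q_1q_2)|_Q]$ as well, since $h\colon R^4_F \twoheadrightarrow R^8_Q$ is dual to the inclusion $R^1_Q\hookrightarrow R^1_F$ given by multiplication by the section $s$. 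This concludes the commutativity of the whole diagram.

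To derive the equality \eqref{tautilde_mu2}, combining Theorem~\ref{rho} with Proposition~\ref{tau_primo} and the injectivity of $\iota$ yields $(II_{\cC\slash\cA_5})|_{I_2(\omega_Q\otimes\alpha)} = \tau'\circ \rho$, while $\mu_2 = m\circ\rho$ with $\rho$ factoring through $\Im(\beta_\alpha)$. Thus it suffices to prove $m_F\circ\tau' = \tilde\tau\circ m$ on $\Im(\beta_\alpha)$. On a generator $m_\alpha(q_1)\otimes m_\alpha(q_2)$ with $q_1,q_2\in \Sym^2 R^1_F$, the left-hand side gives $m_F([q_1]\odot[q_2]) = [q_1]\cdot[q_2]\in R^4_F$, while on the right-hand side $m$ produces $(q_1q_2)|_Q\in H^0(Q,\omega_Q^{\otimes 4})$, and then $\tilde\tau$, constructed in Proposition~\ref{lift}, lifts $(q_1q_2)|_Q$ to $(q_1q_2)|_S\in H^0(S,\mathcal O_S(4M))$ and projects to $R^4_F\cong H^0(S,\mathcal O_S(4M))/\Im(f)$ (Proposition~\ref{r4-r8}), yielding $[q_1q_2] = [q_1]\cdot[q_2]$ since restriction from $\mathbb P^4$ to $S$ sends $J^4_F$ into $\Im(f)=\tilde J^4_F$. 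The main obstacle is precisely this last identification---verifying that the abstract construction of $\tilde\tau$ through the Del Pezzo surface $S$ returns the expected class when applied to restrictions of reducible quartics from $\mathbb P^4$; once this is confirmed, the rest is a routine diagram chase.
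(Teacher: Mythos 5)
Your proposal is correct, and it assembles the outer parts of the diagram from exactly the same prior results as the paper does (equation \eqref{rho-mu2} for $\mu_2=m\circ\rho$, Proposition~\ref{tau_primo} and Theorem~\ref{rho} for the left-hand block, Proposition~\ref{lift} for $h\circ\tilde\tau=\tau$). Where you genuinely diverge is in the one step that carries the content of the theorem, namely $m_F\circ\tau'=\tilde\tau\circ m$ on $\Im(\beta_\alpha)$. The paper proves this structurally: it introduces the Koszul kernel sheaf $\mathcal G$ with $\mathcal F=\mathcal G\otimes\mathcal O_S(2M)$, identifies $R^2_F\cong H^1(S,\mathcal G)$ and $\tau'=\delta_{\mathcal G}\otimes\delta_{\mathcal G}$, shows that the cup product $H^1(S,\mathcal G)\otimes H^1(S,\mathcal G)\to H^1(S,\mathcal F)$ is the multiplication $m_F$, and concludes from the connecting-homomorphism identity $x\cup\delta_{\mathcal G}(y)=\delta_{\mathcal F}(x\cdot y)=\delta_{\mathcal G}(x)\cup y$. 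You instead evaluate both composites on the pure tensors $m_\alpha(q_1)\otimes m_\alpha(q_2)$ spanning $\Im(\beta_\alpha)$ and observe that both return $[q_1q_2]\in R^4_F$, using the explicit description of $\tilde\tau$ as ``restrict to $S$ and reduce modulo $\Im(f)$'' together with the identifications of Proposition~\ref{r4-r8}. This is legitimate: the needed inputs (that $\Im(f)$ is the restriction of $J^4_F$ to $S$, that $H^0(S,\mathcal O_S(4M-Q'))\subset\Im(f)$, and that restriction from $\mathbb P^4$ is surjective onto the relevant spaces of sections on $S$) are all established in Section~\ref{sec5}, so every class can be represented by a polynomial and the compatibility of multiplications becomes a tautology. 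Your route is shorter and avoids $\mathcal G$ and the cup-product interpretation altogether; the paper's route buys the intrinsic identification of $m_F$ as a cup product on $S$, which is conceptually cleaner and reused implicitly elsewhere. A small bonus on your side: you explicitly verify the bottom square $m_Q\circ\iota=h\circ m_F$, which the paper leaves tacit (it is not needed for \eqref{tautilde_mu2} but is part of the stated diagram); your verification is at the same level of rigour as the paper's own identification of $h$ (the dual of multiplication by $s$) with the restriction-induced map.
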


\begin{proof}
The commutativity of the left-hand-side of the diagram is a corollary of Proposition~\ref{tau_primo} and Theorem~\ref{rho}.
 
The equality $m \circ \rho = \mu_2$ is equation \eqref{rho-mu2}.  By Proposition~\ref{lift}, we know that $h \circ \tilde{\tau} = \tau$. 

It remains to show that $m_F \circ \tau'= \tilde{\tau} \circ m $. First, we need to express $\tau'$ in terms of the Koszul cohomology of the surface $S$. Consider the Koszul complex obtained by tensoring \eqref{koszul_F} by $\mathcal O_S(-2M)$:
\begin{equation}\label{koszul_G}
0\lra {\mathcal O}_S(-4M) \lra {\mathcal O}_S^{\oplus 3}(-2M) \lra  {\mathcal O}_S^{\oplus 3} \xrightarrow{(L_0,L_1,L_2)} {\mathcal O}_S(2M)\lra 0.
\end{equation}
Define the sheaf $\mathcal G$ as the kernel of the last map:
\begin{equation}
    0\longrightarrow \mathcal G \longrightarrow {\mathcal O}_S^{\oplus 3} \longrightarrow  \mathcal O_S(2M) \longrightarrow 0.
\end{equation}
Notice that $\mathcal F=\mathcal G\otimes \mathcal O_S(2M)$.
One easily computes $H^0(S, \mathcal G)=0$; hence we have the following exact sequence in cohomology: 
\[
0\longrightarrow H^0(S,\mathcal O_S)^{\oplus 3} \stackrel{\kappa}{\longrightarrow} H^0(S,\mathcal O_S(2M)) \xrightarrow{\delta_{\mathcal G}} H^1(S,\mathcal G)\longrightarrow 0.
\]

The image of the map $\kappa $ consists of those elements that can be written as $\sum_{i=0}^2 A_i L_i$, where $A_i \in H^0(S,\mathcal O_S)$. So, by construction, it is the image of the restriction of $J^2_F$ to the surface $S$. This gives a map $R^2_F \rightarrow H^1(S, {\mathcal G})$ as follows: 
\begin{equation}
\xymatrix@R=0.8cm@C=0.8cm{
& 0 \ar[d]& 0 \ar[d]&& \\
&I_2(\omega_Q\otimes \alpha) \ar[d] \ar@{=}[r] & I_2(\omega_Q\otimes \alpha) \ar[d] && \\ 
0\ar[r]& J^2_F\ar@{->>}[d]\ar[r] & H^0({\mathbb P}^4, \mathcal O_{\mathbb P^4}(2)) \ar@{->>}[d] \ar[r] & \ar[d] R^2_F \ar[r] & 0 \\
0 \ar[r]&\Im(\kappa) \ar[r] & H^0(S,\mathcal O_S(2M)) \ar[r]^-{\delta_{\mathcal G}} & H^1(S, \mathcal G) \ar[r] & 0. 
    }      
\end{equation}

So the map $R^2_F \rightarrow H^1(S, \mathcal G)$ is surjective, and since they have the same dimension, it is an isomorphism. 

We notice that the image of the natural inclusion of $H^0(S,\mathcal O_S(2M))$ in $H^0(Q', \omega_{Q'}^{\otimes 2})$ given by restricting to $Q'$ coincides with the image of $m_{\alpha}$. 

Hence the map $\tau'$ is given by the tensor product
$$
\tau'= \delta_{\mathcal G} \otimes \delta_{\mathcal G}\colon H^0(S,\mathcal O_S(2M)) \otimes H^0(S,\mathcal O_S(2M)) \longrightarrow   H^1(S, \mathcal G) \otimes  H^1(S, \mathcal G).
$$

Now we consider the evaluation map of $\mathcal O_S(2M)$ tensored by $\mathcal G$:
\[
H^0(S,\mathcal O_S(2M))\otimes \mathcal G \longrightarrow \mathcal O_S(2M)\otimes \mathcal G \cong \mathcal F. 
\]
Denote by $\delta_{\mathcal F}$ the boundary map in the exacts sequence \eqref{fascio_F}. Then we have the following commutative diagram:
\begin{equation}
\xymatrix@R=0.8cm@C=0.8cm{
H^0(S,\mathcal O_S(2M))\otimes H^0(S,\mathcal O_S(2M)) \ar[d]_{\Id\otimes \delta_{\mathcal G}}\ar[r] & H^0(S,\mathcal O_S(4M)) \ar[d]^{\delta_{\mathcal F}}
\\
H^0(S,\mathcal O_S(2M))\otimes H^1(S,\mathcal G) \ar[r]^{\qquad \qquad \cup}  & 
H^1(S,\mathcal F),
}
\end{equation}
where the top horizontal map is given by multiplication of sections, while the bottom one is the cup product. 

By the commutativity of the above diagram, we see that 
$$ x \cup \delta_{\mathcal G}(y) = \delta_{\mathcal F}(x \cdot y) = \delta_{\mathcal G}(x) \cup y$$
for all $x,y \in H^0(S,\mathcal O_S(2M))$. Thus $\kappa(z)  \cup \delta_{\mathcal G}(y) = \delta_{\mathcal G}(\kappa(z)) \cup y =0$. 
Hence the cup product induces a map
$$
H^1(S,\mathcal G) \otimes H^1(S,\mathcal G) \longrightarrow H^1(S,\mathcal F),
$$
which is identified with the multiplication map $m_F\colon R^2_F \otimes R^2_F \rightarrow R^4_F$. Since $\tilde{\tau}$ is the map
$$
H^0(S,O_S(4M))/H^0(S,O_S(4M- Q'))\rightarrow H^1(S, {\mathcal F})
$$
induced by  $\delta_{\mathcal F}$, we have shown that
\begin{equation*}\pushQED{\qed}
  m_F \circ \tau'= \tilde{\tau} \circ m .
  \qedhere \popQED
	\end{equation*}
\renewcommand{\qed}{}   
\end{proof}

\begin{thm} The image of $m_F\circ (II_{\cC \slash \cA_5})\vert_{I_2(\omega_{Q}\otimes \alpha) }$ is contained in 
$\Ker(h\colon R^4_F\rightarrow R^8_Q)$; in particular, we have an endomorphism
\[
m_F\circ (II_{\cC \slash \cA_5})\vert_{I_2(\omega_{Q}\otimes \alpha) }\colon I_2(\omega_Q\otimes \alpha) \longrightarrow \Ker\left(h\colon R^4_F\longrightarrow R^8_Q\right) \cong I_2(\omega_Q\otimes \alpha).
\]
\end{thm}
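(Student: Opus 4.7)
Given all the machinery built in the previous sections, the proof reduces to a short diagram chase. The plan is to post-compose the map $m_F \circ (II_{\cC \slash \cA_5})\vert_{I_2(\omega_{Q}\otimes \alpha)}$ with $h\colon R^4_F \rightarrow R^8_Q$ and chase it around the large commutative diagram of Theorem~\ref{diagramone}.

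First, I would invoke the identity \eqref{tautilde_mu2} of Theorem~\ref{diagramone}, which states
$$
m_F \circ (II_{\cC \slash \cA_5})\vert_{I_2(\omega_{Q}\otimes \alpha)} = \tilde{\tau} \circ \mu_2.
$$
Composing on the left with $h$ and applying Proposition~\ref{lift}, which provides the lifting relation $h \circ \tilde{\tau} = \tau$, I would obtain
$$
h \circ m_F \circ (II_{\cC \slash \cA_5})\vert_{I_2(\omega_{Q}\otimes \alpha)} = \tau \circ \mu_2.
$$
By Proposition~\ref{mu2}, the right-hand side vanishes identically, so the image of $m_F \circ (II_{\cC \slash \cA_5})\vert_{I_2(\omega_{Q}\otimes \alpha)}$ is contained in $\Ker h$.

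For the endomorphism statement, I would dualize the short exact sequence of Lemma~\ref{lemma_s}. Using the Macaulay dualities $(R^1_F)^* \cong R^4_F$ and $(R^1_Q)^* \cong R^8_Q$, the dual sequence takes the form
$$
0 \longrightarrow I_2(\omega_{Q}\otimes \alpha) \longrightarrow R^4_F \xrightarrow{\; h\;} R^8_Q \longrightarrow 0,
$$
canonically identifying $\Ker h$ with $I_2(\omega_{Q}\otimes \alpha)$ and yielding the promised endomorphism. The genuine technical work — namely, the vanishing $\tau \circ \mu_2 \equiv 0$ from Proposition~\ref{mu2} and the construction of the lifting $\tilde{\tau}$ through the Del Pezzo surface $S$ from Proposition~\ref{lift} — has already been performed in Sections~\ref{sec4} and~\ref{sec5}, so there is no real obstacle at this final assembly step; the theorem follows by reading off the composition along two sides of the diagram in Theorem~\ref{diagramone}.
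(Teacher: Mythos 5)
Your argument is correct and is essentially the paper's own proof: both are a diagram chase through Theorem~\ref{diagramone} that reduces the claim to the vanishing $\tau\circ\mu_2=0$ of Proposition~\ref{mu2}, the only cosmetic difference being that you traverse the square via $h\circ\tilde{\tau}=\tau$ while the paper uses the equivalent path $\tau\circ\mu_2=m_Q\circ\iota\circ(II_{\cC\slash\cA_5})\vert_{I_2(\omega_Q\otimes\alpha)}$. The identification $\Ker(h)\cong I_2(\omega_Q\otimes\alpha)$ via the dual of Lemma~\ref{lemma_s} is likewise exactly the sequence already displayed at the start of Section~\ref{sec5}.
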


\begin{proof}
Using the diagram of Theorem~\ref{diagramone}, we have that $\tau \circ \mu_2 = m_Q\circ \iota \circ (II_{\cC \slash \cA_5})\vert_{I_2(\omega_{Q}\otimes \alpha) }$.  By Proposition~\ref{mu2}, we have that $\tau \circ \mu_2=0$; hence the statement follows.
\end{proof}

\begin{thm} 
\label{identity}
The map $m_F\circ II_{\cC \slash \cA_5}$ is a multiple of the identity.
\end{thm}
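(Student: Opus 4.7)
The plan is to leverage the invariance property just established (the previous theorem) to show that $\Phi := m_F \circ II_{\cC/\cA_5}$ preserves a large family of two-dimensional subspaces of $(R^1_F)^*$, and then conclude via a simple polynomial-degree argument that $\Phi$ must be a scalar.

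First I would translate the input. By the previous theorem, for every line $l \in F(X)$ in a Zariski-dense open subset (namely $l$ non-special with $Q_l$ smooth), the $2$-dimensional subspace $V_l := I_2(\omega_{Q_l}\otimes \alpha_l) \subset J^2_F \cong (R^1_F)^*$ is $\Phi$-invariant. The key geometric content is Lemma~\ref{lemma_s}: under the polarity identification $(R^1_F)^* \cong \mathbb{C}^5$ (sending a vector $(a_0,\ldots,a_4)$ to the polar quadric $\sum a_i F_{x_i}$), the subspace $V_l$ is precisely the $2$-plane whose projectivization is the line $l \subset \mathbb{P}^4$ itself. Thus $\Phi \in \End((R^1_F)^*)$ preserves the linear span of every line of the Fano surface (in the good open subset).

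Next, I would use the classical fact that a general point $p \in X$ lies on exactly $6$ distinct lines of $F(X)$. For generic $p$, at least two of these lines, say $l_1 \ne l_2$, lie in the good open subset. Then $p \in V_{l_1} \cap V_{l_2}$, and since $l_1,l_2$ are distinct lines of $\mathbb{P}^4$ meeting at $p$, the intersection of their $2$-dimensional linear spans is exactly $\langle p \rangle$. Hence $\Phi(p) \in \langle p \rangle$ for $p$ in a Zariski-dense subset of $X$, and by closedness, for every $p \in X$. Equivalently, $\Phi(p)\wedge p = 0$ as a section of $\Lambda^2(R^1_F)^* \otimes \mathcal{O}_{\mathbb{P}^4}(2)$ restricted to $X$.

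To finish, I would note that each coordinate of $\Phi(p) \wedge p$ is a homogeneous polynomial of degree $2$ in $p$; since the ideal of $X = \{F=0\}$ is $(F)$ with $\deg F = 3$, any quadratic polynomial vanishing on $X$ must vanish identically on $\mathbb{P}^4$. Therefore $\Phi(p) \in \langle p \rangle$ for every $p$, which forces $\Phi$ to be a multiple of the identity. The main obstacle is ensuring that the generic point $p \in X$ lies on at least two lines in the good open subset of $F(X)$; this follows because $F(X)$ has dimension $2$, the special-line locus and the locus where $Q_l$ is singular form proper closed subsets, and the incidence correspondence $\{(p,l) : p \in l \in F(X)\}$ dominates $X$ with general fiber of size six.
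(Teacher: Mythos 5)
Your proposal is correct and follows essentially the same route as the paper: both arguments rest on the invariance of the $2$-planes $I_2(\omega_{Q_l}\otimes\alpha_l)$ (the spans of the polars of the points of a line $l\subset X$) under $\Phi=m_F\circ II_{\cC\slash\cA_5}$, and both intersect two such planes at a point of $X$ lying on two lines to conclude that the polars of points of $X$ are eigenvectors. The only divergence is the final globalization step: where you note that the coordinates of $\Phi(p)\wedge p$ are quadrics vanishing on the cubic and hence identically zero, the paper instead takes a general line of $\mathbb{P}^4$ meeting $X$ in three points and observes that three distinct eigenlines inside a $2$-dimensional invariant subspace force that subspace to be an eigenspace; both devices are valid, and your version has the minor merit of making explicit the genericity needed for the two lines through a general point of $X$ to be non-special with smooth discriminant quintic.
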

\begin{proof}
Given a line $l$ contained in $X$, we have the commutative diagram
$$
\xymatrix@R=0.8cm@C=0.8cm{
0 \ar[d] && 0 \ar[d] \\
I_2(\omega_{Q_l}\otimes \alpha_l) \ar[d] \ar[rr] &&  I_2(\omega_{Q_l}\otimes \alpha_l)  \ar[d]
\\
    J^2_F \cong \left(R^1_F\right)^* \ar[d] \ar[r]^{II_{\cC \slash \cA_5}
} &  \Sym^2R^2_F   \ar[r]^{m_F} & R^4_F \cong \left(R^1_F\right)^* \ar[d]^{h}
    \\
   J^4_{Q}  \cong \left(R^1_{Q}\right)^* \ar[d] && R^8_{Q} \cong \left(R^1_{Q}\right)^* \ar[d]
   \\
    0 && 0\rlap{,}
    }      
$$
where the two vertical short exact sequences are both the dual of the exact sequence of Lemma~\ref{lemma_s}. So the corresponding $2$-dimensional subspace $I_2(\omega_{Q_l}\otimes \alpha_l)$  of $(R^1_F)^*$ parametrizing the polars of the points in $l$ is invariant under $m_F\circ II_{\cC \slash \cA_5}$.

Now consider  a  point $p\in X$, and choose two different lines $l_1, l_2$ contained in $X$ passing through $p$. Then the polar of $p$ belongs to the intersection of the two invariant $2$-dimensional subspaces $I_2(\omega_{Q_{l_i}} \otimes \alpha_{l_i})$; therefore, it is an eigenvector of the map $m_F\circ II_{\cC \slash \cA_5}$.  

All the quadrics in $J^2_F$ are polars of points in $\mathbb P^4$. Since all the polars of points in $X$ are eigenvectors, clearly all points in $J^2_F$ are eigenvectors. In fact, given a point $p \in \mathbb P^4$, consider a line $l$ passing through the point and intersecting $X$ in three points. The polars of the three points are eigenvectors, so the $2$-dimensional space $I_2(\omega_{Q_{l}} \otimes \alpha_l)$ contains three distinct eigenspaces. Hence the polars of all the points of the line $l$ are eigenvectors; in particular, the polar of $p$ is an eigenvector. So the map is a multiple of the identity.
\end{proof}

\section{Rank 3 quadrics}\label{sec7}

Recall that for a non-singular cubic threefold $X= \{F =0\}$, the polar quadrics of rank at most $4$ is the zero locus $H$ of the determinant of the Hessian matrix. 
In \cite[Lemma 37.4]{ar} it is proven that  the locus of polar quadrics of rank at most $3$ is equal to the singular locus of  $ H$. In \cite[Theorem 38.1]{ar} it is also proven that for a general cubic threefold $X$, the locus of rank at most $3$ polar quadrics is an irreducible non-singular curve intersecting $X$ at a finite number of points. In the next proposition we use the notation of Proposition~\ref{basic_properties}.

\begin{prop}
\label{rank3}
Assume that $X$ is a cubic threefold, and take a line $l$ contained in $X$ such that the  associated quintic $Q=Q_l$ is smooth. Assume that one of the points $\psi(p_i) \in X$ is such that the polar quadric $\Gamma_i := \Gamma_{\psi(p_i)}X$ is of rank $3$. Then the conic $C$ is the product of two lines  $l_1$ and $ l_2$, where  $l_1$ is  $4$-times tangent to $Q$ at $p_i$ and intersects $Q$ at another point $p_j$ which lies on the line $l_2$ that is bitangent to $Q$ at two other points. 
\end{prop}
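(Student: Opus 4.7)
The plan is to reduce the rank-$3$ condition on $\Gamma_i$ to an explicit local condition on the cubic threefold at $\psi(p_i)$, and then translate it to the plane geometry via the conic bundle structure.

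First, in local coordinates with $\psi(p_i) = [1:0:0:0:0]$ and tangent hyperplane $T_{\psi(p_i)}X = \{x_1 = 0\}$, one writes $F = x_0^2 x_1 + x_0 Q(x_1,\ldots ,x_4) + C(x_1,\ldots ,x_4)$. A direct computation of the $5 \times 5$ symmetric matrix of the polar quadric $\Gamma_i = F_{x_0} = 2 x_0 x_1 + Q$ yields the rank formula $\operatorname{rk}(\Gamma_i) = 2 + \operatorname{rk}(Q|_{x_1=0})$. Hence rank $3$ is equivalent to $Q|_{x_1=0}$ being the square of a nonzero linear form, which says geometrically that the tangent cone of the singular cubic surface $T_{\psi(p_i)}X \cap X$ at $\psi(p_i)$ is a double line in the projective tangent $\mathbb{P}^2$.

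Second, I would re-express this condition in the conic-bundle coordinates $(x_0, x_1, x_2, u, v)$ with $l = \{x_0 = x_1 = x_2 = 0\}$ and $\psi(p_i) = [0:0:0:1:0]$, writing $F = F_1 + F_2 + F_3$ with $F_1 = \sum_{j=0}^{2} A_j(u,v)\, x_j$. After suitable normalisation (so $A_0 = u^2 + \beta_0 uv + \gamma_0 v^2$ and $\alpha_1 = \alpha_2 = 0$), the rank-$3$ condition forces the coefficients of $uv$ in $A_1, A_2$ to vanish as well (so $A_1 = \gamma_1 v^2$, $A_2 = \gamma_2 v^2$), together with a rank-$1$ condition on the coefficients of the $u$-linear part of $F_2$. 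Geometrically, all tangent hyperplanes to $X$ along $l$, projected to the base $\mathbb{P}^2$, pass through the common point that is the image of the vertex line of $\Gamma_i$.

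Third, I would expand the residual-conic equation of the conic bundle $X_l \to \mathbb{P}^2$ around the plane $\Pi_{p_i}$ using these constraints, to compute the local shape of the discriminant $Q$ at $p_i$ together with that of the auxiliary conic $C$ (defined by $2D = C \cdot Q$). A straightforward computation shows that the rank-$3$ condition forces the intersection multiplicity $(C\cdot Q)_{p_i} \geq 4$. Since $D$ is the unique divisor in $|\mathcal{O}_Q(1) \otimes \alpha|$ and has degree $5$, compatibility with $2D = C \cdot Q$ is possible only if $D$ is non-reduced at $p_i$ with multiplicity $2$, so $D = 2p_i + p_j + p_k + p_m$ for distinct remaining points.

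Finally, $(C \cdot Q)_{p_i} = 4$ on the smooth $Q$ is incompatible with $C$ being irreducible, so $C = l_1 \cup l_2$; one component $l_1$ must be tangent to $Q$ at $p_i$ with fourth-order contact, giving $l_1 \cdot Q = 4p_i + p_j$ by B\'ezout. The residual intersection $l_2 \cdot Q = 2D - l_1 \cdot Q = p_j + 2p_k + 2p_m$ then shows $l_2$ passes through $p_j$ and is bitangent to $Q$ at $p_k$ and $p_m$, as claimed. The main obstacle is the third step: translating the $\mathbb{P}^4$-local rank-$3$ matrix condition into the specific intersection multiplicity $(C \cdot Q)_{p_i} = 4$ on $\mathbb{P}^2$ requires a careful expansion of the conic-bundle equations using the explicit vanishing conditions obtained in the first two steps.
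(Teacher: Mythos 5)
Your opening reduction is correct: with $\psi(p_i)=[1:0:0:0:0]$ and $F=x_0^2x_1+x_0Q_2+C_3$ one gets $\operatorname{rk}\Gamma_i=2+\operatorname{rk}(Q_2|_{x_1=0})$, so rank $3$ means the tangent cone of the tangent hyperplane section is a double plane. The gap is in your last step. From the rank-$3$ hypothesis you extract only the local statement $(C\cdot Q)_{p_i}\ge 4$, and then claim that fourth-order contact at a point of the smooth quintic is incompatible with $C$ being irreducible. That is false: a smooth conic can osculate a smooth plane curve to order exactly $4$ at a point (this is four linear conditions on the five-dimensional space of conics), and in that situation $D=2p_i+p_j+p_k+p_m$ while $\Gamma_i$ still has rank $4$. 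Indeed, rank $3$ of $\Gamma_i$ is equivalent, by the base-point-free pencil trick applied to $L_i=\mathcal{O}_Q(1)(-p_i)$ and $M_i=\mathcal{O}_Q(1)(p_i)\otimes\alpha$, to $h^0(Q,\alpha(2p_i))\ge 1$, i.e.\ to a linear equivalence $4p_i\sim 2R_1+2R_2$; a Riemann--Roch computation shows this is strictly stronger than $\operatorname{mult}_{p_i}D\ge 2$ --- the extra content is precisely the collinearity of the residual points of $D$, i.e.\ the splitting of $C$. So no amount of local expansion at $p_i$ (or at $\psi(p_i)$ in $\mathbb{P}^4$) can close the argument: the reducibility of $C$ is a global property of the pair $(Q,\alpha)$, not of a jet at $p_i$.

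The paper's proof supplies exactly this missing global input. Rank $3$ forces $|M_i|=|L_i|+(R_1+R_2)$ with $R_1+R_2$ a base divisor, hence $\alpha=\mathcal{O}_Q(R_1+R_2-2p_i)$ and $4p_i\sim 2R_1+2R_2$. Since $Q$ is not hyperelliptic the two divisors are distinct, so $|4p_i|$ contains a $g^1_4$; on a smooth plane quintic every $g^1_4$ is cut out by the lines through a point of the curve, whence $\mathcal{O}_Q(1)=\mathcal{O}_Q(4p_i+p_0)$ for some $p_0\in Q$. This is what produces the line $l_1$ with $l_1\cdot Q=4p_i+p_0$, and comparison with $\mathcal{O}_Q(1)=\mathcal{O}_Q(p_0+2R_1+2R_2)$ produces the bitangent $l_2$; the identity $C=l_1l_2$ then follows from the uniqueness of the conic with $C\cdot Q=2D$. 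You would need to incorporate this step, or an equivalent global argument, for your plan to go through. (A minor further point: nothing in your argument justifies that the residual points $p_j,p_k,p_m$ are pairwise distinct.)
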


\begin{proof}
Assume $i=1$ and, as usual, set $D = p_1 +p_2 +p_3 + p_4 +p_5$, where the $p_i$ are not necessarily distinct. As in Proposition~\ref{mu2}, set $L:= {\mathcal O}_Q(1)(-p_1)$, $N:= {\mathcal O}_Q(1)(p_1) \otimes \alpha $. The line bundle $N$ is isomorphic to ${\mathcal O}_Q(2)(-D +p_1)$.  
Since $\Gamma_1$ has rank $3$, we must have $|N|=|{\mathcal O}_Q(1)(p_1) \otimes \alpha| = |{\mathcal O}_Q(1)(-p_1)| + (R_1 + R_2)$, where $R_1+R_2$ is the base locus. In fact, assume $H^0(L) = \langle s_0, s_1\rangle$; then $H^0(N) = \langle ts_0, ts_1\rangle$, where $\div(t) = R_1 + R_2$ and $\Gamma_1 = s_0 (ts_0) \odot s_1 (ts_1) - s_0 (ts_1) \odot s_1 (ts_0) = t^2 (s_0^2 \odot s_1^2 - (s_0s_1)^2) \in I_2(\omega_Q \otimes \alpha)$.

Now we have to show that the points $R_i$ are in the support of $D$. 
 We have $\alpha = {\mathcal O}_Q(R_1 + R_2 -2p_1)$ and $4p_1 \sim 2R_1 + 2R_2$; hence $Q$ has a $g^1_4$. Since $Q$ is not hyperelliptic, $2p_1 \not\sim R_1 + R_2$, so there exists a $p_0 \in Q$ such that ${\mathcal O}_Q(1) = {\mathcal O}_Q(p_0 + 4p_1) = {\mathcal O}_Q(p_0 + 2R_1 + 2R_2)$. 

 Then ${\mathcal O}_Q(1)\otimes \alpha = {\mathcal O}_Q(p_1 + p_2 + p_3 +p_4 + p_5 ) = {\mathcal O}_Q(2p_1 + p_0 + R_1 +R_2)$, so $p_0 + p_1 + R_1 + R_2 \sim p_2 + p_3 + p_4 + p_5$. If $p_0 + p_1 + R_1 + R_2 \neq p_2 + p_3 + p_4 + p_5$, we get a $g^1_4$; hence $p_0, p_1, R_1, R_2$ would lie on a line, which is impossible since the line connecting $p_0$ and $p_1$ intersects $Q$ once in $p_0$ and with multiplicity $4$ in $p_1$. So we have $p_0 + p_1 + R_1 + R_2 =p_2 + p_3 + p_4 + p_5$, and up to a permutation of indexes, we can assume $p_0 = p_5$, $p_1 = p_4$, $R_1 = p_2$, $R_2 = p_3$, $D = 2p_1 + p_2 + p_3 + p_0$. We have ${\mathcal O}_Q(1) = {\mathcal O}_Q(p_0 + 4p_1) = {\mathcal O}_Q(p_0 + 2R_1 + 2R_2) = {\mathcal O}_Q(p_0 + 2p_2 + 2p_3)$; hence we conclude since the line $l_1$ is the line through $p_0$ and $p_1$, while the line $l_2$ through $p_0$ and $p_2$ is bitangent to $Q$ at $p_2$ and $p_3$ and $C = l_1 l_2$. 
\end{proof}

Denote by  ${\mathcal U}$ the family of smooth quintics such that there exist two lines $l_1$, $l_2$ intersecting at a point $p_0 \in Q$ and such that $l_1$ is $4$-times tangent to $Q$ at a point $p_1 \neq p_0$, while $l_2$ is bitangent to $Q$ at two other points $p_2$ and $p_3$. Then the quadric $\Gamma_1$ is of rank $3$.

Proposition~\ref{rank3} shows that all quadrics of rank $3$ which are given by polars to one of the points $\psi(p_i) $ of  on a cubic threefold $X$ arise in this way; namely, the associated quintic belongs to the family ${\mathcal U}$. In the next section we give an explicit description of ${\mathcal U}$, and a parameter count shows that this family provides a $10$-dimensional subvariety of $\mathcal M_6$. Since Proposition \ref{rank3} gives a distinguished $2$-torsion point, we get a well-defined $10$-dimensional variety $\mathcal {RU}\subset \mathcal {RQ}^-\subset \mathcal R_6$. Our aim is to prove that the restriction of the Prym map $P_{\mathcal U}\colon\mathcal {RU} \rightarrow \mathcal C$ is dominant. In order to have a proper map, we consider the closure $\overline{\mathcal {RU}}$ of $\mathcal {RU}$ in the Beauville partial compactification $\overline {\mathcal R}_6$. Now, it is enough to prove that one fiber of this extended map $\overline {P}_{\mathcal U}$ is finite. 
\begin{thm}
\label{dominant}
The map $\overline{P_{\mathcal U}}\colon \overline{\mathcal {RU}}\rightarrow \mathcal C$ is dominant. 
\end{thm}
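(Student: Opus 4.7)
The strategy is properness combined with a dimension count.  Both $\overline{\mathcal{RU}}$ and $\mathcal C$ are irreducible of dimension $10$: the moduli space of cubic threefolds is $10$-dimensional in the standard way, and the parameter count for $\mathcal U$ described just before the statement, together with the distinguished choice of $2$-torsion point from Proposition~\ref{rank3}, gives $\dim \overline{\mathcal{RU}}=10$.  Since $\overline{P_{\mathcal U}}$ is proper by construction of the Beauville partial compactification and source and target are irreducible of the same dimension, dominance is equivalent to the existence of a single point $[JX]\in \mathcal C$ whose fiber $\overline{P_{\mathcal U}}^{-1}(JX)$ is zero-dimensional, as upper semicontinuity of fiber dimension then forces the image to have full dimension and hence to coincide with $\mathcal C$.

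To produce a finite fiber I would pick $JX$ inside a suitable non-empty Zariski open subset $U\subset \mathcal C$ and split the fiber as $\overline{P_{\mathcal U}}^{-1}(JX)=A_X\sqcup B_X$, where $A_X$ is its intersection with the interior $\mathcal{RU}$ and $B_X$ is its intersection with the boundary $\partial\overline{\mathcal{RU}}=\overline{\mathcal{RU}}\setminus \mathcal{RU}$.  For $A_X$ I would invoke Adler's \cite[Theorem~38.1]{ar}: for generic $X\in \mathcal C$ the rank-$\le 3$ polar locus of $X$ is an irreducible smooth curve meeting $X$ transversally in a finite set $\{p_1,\dots,p_N\}$.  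By Proposition~\ref{rank3} every element of $A_X$ corresponds to a non-special line $l\in F(X)$, with $Q_l$ smooth, that passes through one of the $p_i$; since through a fixed point of $X$ only finitely many lines of the Fano surface $F(X)$ pass, $A_X$ is finite.

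For $B_X$ I would exploit dimension: $\partial\overline{\mathcal{RU}}$ is a proper closed subvariety of the irreducible $10$-dimensional variety $\overline{\mathcal{RU}}$, so $\dim \partial\overline{\mathcal{RU}}\le 9$.  By properness of $\overline{P_{\mathcal U}}$, the image $\overline{P_{\mathcal U}}(\partial\overline{\mathcal{RU}})$ is a closed subset of $\mathcal C$ of dimension at most $9$, and hence a proper closed subset.  For $X$ in the complement of this subset (intersected with the Adler-open locus used above) we have $B_X=\emptyset$ and $A_X$ finite; this non-empty open set witnesses a zero-dimensional fiber and therefore dominance.

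The main obstacle is the control of the boundary $\partial\overline{\mathcal{RU}}$, specifically ruling out the possibility that one of its irreducible components dominates $\mathcal C$; if some component did dominate, the bound $\dim \overline{P_{\mathcal U}}(\partial\overline{\mathcal{RU}})\le 9$ would still hold but requires the genuine verification $\dim\partial\overline{\mathcal{RU}}\le 9$, i.e.\ that no such component has the same dimension as $\overline{\mathcal{RU}}$.  This follows from the fact that $\overline{\mathcal{RU}}$ is taken to be the closure of the irreducible locus $\mathcal{RU}$ in $\overline{\mathcal R}_6$, so every other component of $\overline{\mathcal{RU}}$ is strictly contained in the boundary stratum of $\overline{\mathcal R}_6$ which is itself a proper closed subset.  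An alternative, more concrete route, which avoids this abstract step, is to exhibit an explicit smooth cubic $X$ on which some line $l$ through a rank-$3$ polar point has $Q_l$ smooth (the content of Remark~\ref{adler+}); such an explicit example produces a point in the image of $P_{\mathcal U}$ and, combined with the finiteness of $A_X$ above, closes the argument.
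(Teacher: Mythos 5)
Your overall strategy coincides with the paper's: both source and target are irreducible of dimension $10$, the map extended to the Beauville partial compactification is proper, and it suffices to exhibit one zero-dimensional fiber. Your finiteness argument for the interior part $A_X$ of the fiber (Adler's theorem that the rank-$\le 3$ polar locus of a general $X$ is a curve meeting $X$ in finitely many points, plus finiteness of the lines of $F(X)$ through a fixed point) is exactly the paper's, and your dimension count disposing of the boundary part $B_X$ is sound --- the paper is in fact even more laconic on that point.

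The gap is nonemptiness. In your third paragraph you pass from ``$B_X=\emptyset$ and $A_X$ finite'' to ``a zero-dimensional fiber,'' but a finite set can be empty, and an empty fiber gives no information about the image. What has to be shown --- and what occupies essentially the whole of the paper's proof --- is that there exists at least one smooth cubic $X$ carrying a point $p$ with rank-$3$ polar quadric together with a line $l\subset X$ through $p$ such that $p$ is one of the five triple points of $l$ and the discriminant quintic $Q_l$ is smooth; equivalently, that $\mathcal{RU}$ actually hits $\mathcal C$ somewhere in the good locus. This is not automatic: the paper explicitly remarks that the rank-$3$ polar points one can write down directly on the Klein cubic all give rise to \emph{singular} quintics, which is precisely why the proof constructs an explicit deformation $F_{\epsilon}$ of the Klein cubic, an explicit line $l=\{x_1=x_2=x_4=0\}$ through the rank-$3$ point $[1:0:0:0:0]$, computes the discriminant quintic, checks its smoothness at $\epsilon=1$, and verifies the tangency pattern ($4q_1+q_0$ and $q_0+2q_2+2q_3$) demanded by Proposition~\ref{rank3}. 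Your proposed fallback via Remark~\ref{adler+} is circular: that remark is stated in the paper as a consequence of the proof of Theorem~\ref{dominant}, not as an independent input. You also misidentify ``the main obstacle'' as the control of the boundary; the boundary is the easy part, and the existence statement above is where the actual work lies.
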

\begin{proof}
In the proof of \cite[Theorem 38.1]{ar}, it is shown that on the Klein cubic threefold $X_0 = \{ F_0(x_0,\ldots,x_4) = x_0^2x_1+ x_4^2x_0+ x_1^2x_2 + x_2^2x_3 + x_3^2x_4=0\}$, there exist a finite number of rank $3$ polar quadrics of points in $X_0$. This then implies that on an open subset of $\mathcal C$ containing the Klein cubic, the locus of rank at most $3$ polar quadrics is an irreducible non-singular curve intersecting the cubic at a finite number of points. 
We will show that $X_0 $ is in the closure of the image of $\mathcal {RU}$. 

Notice that some rank $3$ polars of points of the Klein cubic are easily described, but they give rise to singular quintics. Hence we exhibit a deformation $X_{\epsilon}$ of $X_0$ containing one of these points with rank $3$ polar quadric and such that there exists a line $l$ through it  whose associated quintic $Q_l$ is smooth. By Proposition~\ref{rank3}, this implies that $X_0$ is in the closure of the image of $\mathcal {RU}$.

So consider the following deformation:
\begin{align*}
F_{\epsilon} (x_0,\ldots,x_4) &= K(x_0,\ldots,x_4) + \epsilon (x_1^2x_3 + x_3^2x_2)  \\
&=x_0^2x_1+ x_4^2x_0+ x_1^2x_2 + x_2^2x_3 + x_3^2x_4 + \epsilon (x_1^2x_3 + x_3^2x_2),
\end{align*}
 and set
$X_{\epsilon} = \{F_{\epsilon}(x_0,\ldots,x_4) = 0\}$. Let $p_0=[1:0:0:0:0] \in X_{\epsilon}$, and consider the rank $3$ quadric $\Gamma =  \Gamma_{p_0}X_{\epsilon} = \frac{\partial F_{\epsilon}}{\partial x_0} = 2x_0 x_1 + x_4^2$.  We want to show that there exists a line $l \subset X$ passing through $p_0$ such that the corresponding quintic $Q=Q_l$ is smooth, so that $l$ is as in  Proposition~\ref{rank3}. 

Consider the line $l = \{x_1 = x_2 = x_4 = 0\}$. Clearly, $l \subset X_{\epsilon}$ and $p_0 \in l$. A point in $l$ is $[u:0:0:v:0]$. Let $\Pi = \{x_0 = x_3 =0\}$, and take a point $p = [0:x:y:0:z]\in \Pi$. The fiber over $p$ of the conic bundle structure given by the projection from $l$ is obtained as the intersection of $t[0:x:y:0:z] + [u:0:0:v:0]$ with $X_{\epsilon}$ for $t \neq 0$. So, dividing by $t$, we get the conic 
$$\{[u:v:t] \ | \ xu^2+ z^2tu+ (z + \epsilon y)  v^2+ (y^2 + \epsilon x^2)  tv+  x^2y t^2= 0\} $$
with matrix
\[
\left(
\begin{array}{ccc}
    x & 0 & \frac{z^2}{2} \\[.5ex]
    0 & z + \epsilon y &  \frac{y^2 + \epsilon x^2}{2} \\[.5ex]
    \frac{z^2}{2} &  \frac{y^2 + \epsilon x^2}{2} & x^2y
\end{array}
\right).
\]

So the equation of the quintic $Q = Q_l$ is the determinant of the above matrix: 

$$Q = 4x^3yz + 2 \epsilon x^3y^2 - xy^4 - \epsilon^2x^5 - z^5 - \epsilon z^4y = 0.$$

One can check that for $\epsilon =1$, $Q$ is smooth. Moreover, the intersection of $Q$ with the line $\{x=0\}$  is $4q_1 + q_0$, where
$q_0 = [0:1:-\epsilon]$, $q_1 = [0:1:0]$. The intersection of $Q$ with the line $\{z = -\epsilon y\}$  is $q_0 + 2q_2 + 2q_3$, where $q_2 = [1: i \sqrt{\epsilon}: -i\epsilon\sqrt{\epsilon}]$, $q_3 = [-1: i \sqrt{\epsilon}: -i\epsilon\sqrt{\epsilon}]$. 

So $X_{\epsilon}$ belongs to the  image of $\mathcal {RU}$, and $X_0$ is in the closure of this image. 

\end{proof}

From the proof of Theorem~\ref{dominant}, we have the following. 
\begin{rem}
\label{adler+}
There exists a non-empty  Zariski open subset of the moduli space ${\mathcal C}$ of cubic threefolds $X$ such that $X$ contains a point whose polar is of rank $3$ and a line $l$ containing this point such that the quintic $Q_l$ is smooth. 
\end{rem}

\section{Main theorem}\label{sec8}

In this section we will prove the following.

\begin{thm}
\label{main}
The composition $m_F\circ II_{\cC \slash \cA_5}$ is identically zero. Equivalently, the image of the second fundamental form $II_{\cC \slash \cA_5}$ is contained in the kernel of the multiplication map. 
\end{thm}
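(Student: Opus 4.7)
The plan is to combine the rigidity of Theorem~\ref{identity} with an explicit vanishing on a Zariski-dense locus.

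By Theorem~\ref{identity}, for every smooth cubic threefold $X=V(F)$ the endomorphism
\[
m_F\circ II_{\cC\slash\cA_5}\colon (R^1_F)^*\cong J^2_F\lra J^2_F
\]
is a scalar multiple $\lambda(F)\cdot\Id$ of the identity. The scalar $\lambda$ varies algebraically with $F$ and is in particular holomorphic on the irreducible moduli space $\cC$, so the theorem reduces to proving $\lambda\equiv 0$ on any non-empty Zariski open subset of $\cC$.

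Such an open subset $\cC_0\subset\cC$ is furnished by Remark~\ref{adler+}: it parametrizes cubic threefolds $X$ containing a point $p$ with $\rk(\Gamma_p X)=3$ together with a non-special line $l\ni p$ whose associated quintic $Q_l$ is smooth. For $X\in\cC_0$, Proposition~\ref{rank3} describes the conic $C$ attached to $l$ as a union of two lines $l_1\cup l_2$, with $l_1$ tangent to $Q_l$ to order $4$ at a point and $l_2$ bitangent at two further points. The polar $\Gamma_p X$ lies in the $2$-dimensional subspace $I_2(\omega_{Q_l}\otimes\alpha_l)\subset J^2_F$, which by Theorem~\ref{identity} is preserved by $m_F\circ II_{\cC\slash\cA_5}$, with restriction $\lambda(F)\cdot\Id$. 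Hence $\lambda(F)=0$ as soon as this restriction vanishes.

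By formula~\eqref{tautilde_mu2}, the restriction coincides with $\tilde\tau\circ\mu_2$, where $\mu_2$ is the injective (Proposition~\ref{mu2}) second Gaussian map and $\tilde\tau\colon H^0(Q_l,\omega_{Q_l}^{\otimes 4})\to R^4_F$ is the lifting supplied by Proposition~\ref{lift}. The plan is to prove the stronger vanishing $\tilde\tau\equiv 0$ on $\cC_0$ by an explicit computation on a concrete member. Using the one-parameter family $X_\epsilon$ constructed in the proof of Theorem~\ref{dominant} (so that $Q_l$ is written down explicitly and the data of $l_1,l_2,p_0,p_1,p_2,p_3$ are known), one writes out the polars $F_u,F_v$ cutting out the Del Pezzo surface $\overline S$ and the sections $L_0,L_1,L_2\in H^0(S,\mathcal O_S(2M))$ pulled back from $F_{x_0},F_{x_1},F_{x_2}$. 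This renders the Koszul complex~\eqref{fascio_F} and the boundary $\delta_{\mathcal F}\colon H^0(S,\mathcal O_S(4M))\to H^1(S,\mathcal F)\cong R^4_F$ realizing $\tilde\tau$ entirely explicit; it then remains to verify that every section of $\mathcal O_S(4M)$ representing an element of $\mu_2\bigl(I_2(\omega_{Q_l}\otimes\alpha_l)\bigr)$ can be written as $\sum_i S_i L_i$ with $S_i\in H^0(S,\mathcal O_S(2M))$, equivalently lies in $\Ker\delta_{\mathcal F}$.

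The main obstacle is the concrete Koszul calculation in the degenerate configuration of Proposition~\ref{rank3}: the conic $C$ is reducible and meets $Q_l$ also at the node $p_0$ of $C$, and the divisor $D=2p_1+p_2+p_3+p_0$ forces an infinitely near point in the blow-up $\varepsilon\colon S\to\mathbb P^2$. One must check that the setup of Section~\ref{sec5} still applies (possibly after minor adaptation) to $X_\epsilon$, so that $\tilde\tau$ is well-defined, and then carry out the explicit linear algebra confirming $\mu_2\bigl(I_2(\omega_{Q_l}\otimes\alpha_l)\bigr)\subset\Ker\delta_{\mathcal F}$. Once $\tilde\tau\circ\mu_2=0$ is established on $\cC_0$, we obtain $\lambda\equiv 0$ on $\cC_0$, hence $\lambda\equiv 0$ on all of $\cC$ by irreducibility, completing the proof.
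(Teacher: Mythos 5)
Your plan follows the paper's proof essentially step for step: Theorem~\ref{identity} reduces everything to showing the scalar $\lambda$ vanishes, Theorem~\ref{dominant} and Remark~\ref{adler+} supply the dense locus, and formula~\eqref{tautilde_mu2} converts the question into the vanishing of $\tilde\tau\circ\mu_2$ on $I_2(\omega_{Q_l}\otimes\alpha_l)$. The one point of divergence is the decisive computation, which you defer: you propose to check that \emph{all} of $\mu_2\bigl(I_2(\omega_{Q_l}\otimes\alpha_l)\bigr)$ lies in $\Ker\delta_{\mathcal F}$ by direct Koszul algebra on $X_\epsilon$, whereas the paper only needs --- and only proves --- the vanishing at the single rank-$3$ quadric $\Gamma_1$ (one nonzero vector suffices precisely because the composition is a scalar multiple of the identity, as you yourself observe). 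The rank-$3$ structure is what makes that computation tractable: $\mu_2(\Gamma_1)$ factors as $\mu_{1,L}(s_0\wedge s_1)\cdot\mu_{1,N}(ts_0\wedge ts_1)$, so that $\rho_1=\tfrac{y}{z}\left(\Gamma_{p_1}(Q)\right)^2$ modulo $Q$, and the identity $yQ_y-Q=zh$ then yields the explicit relation $A_0Q_x+A_1Q_y+A_2Q_z-(yz)hQ_y=(yz)SQ$ exhibiting a representative of $\rho_1$ in the image of $(L_0,L_1,L_2)$. Your broader verification is true (it is equivalent to the theorem on that locus) but forgoes this simplification, and as written it is exactly the unexecuted content of the proof; the surrounding reduction, including the continuity/density argument (note $\lambda$ is only smooth, not holomorphic, which is still enough), is sound and matches the paper.
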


The strategy of the proof is as follows: we will show that the restriction of $m_F\circ II_{\cC \slash \cA_5}$ to the image $\overline{{P}_{\mathcal U}}({\mathcal {RU}}) $ is zero. Since by Theorem~\ref{dominant}, the map $\overline{P_{\mathcal U}}\colon \overline{\mathcal {RU}}\rightarrow \mathcal C$ is dominant, this will imply that  $m_F\circ II_{\cC \slash \cA_5}$  is identically zero. 

We will compute the second Gaussian map $\mu_2(\Gamma_1)$ of the polar quadric $\Gamma_1$ of Proposition~\ref{rank3}.
We will then show that $0=(\tilde{\tau} \circ \mu_2)(\Gamma_1)=(m_F\circ II_{\cC \slash \cA_5})(\Gamma_1)$. Since by Theorem~\ref{identity}, $(m_F\circ II_{\cC \slash \cA_5})$ is a multiple of the identity, this shows that the restriction of $m_F\circ II_{\cC \slash \cA_5}$ to the image $\overline{{\mathcal P_{\mathcal U}}}(\mathcal {RU}) $ is zero. 

We start by giving an explicit description of the family ${\mathcal U}$. 

Fix the following points in ${\mathbb P}^2$: $p_0=[1:0:0]$, $p_1=[0:1:0]$, $p_2=[0:0:1]$, $p_3=[1:0:1]$. We want to consider all smooth quintics $Q$ passing through the points $p_i$ such that the line $l_1: = \overline{p_0p_1} = \{z=0\}$ is tangent at $Q$ in $p_1$ with multiplicity $4$ and the line $l_2 = \{y=0\}$ passing through $p_0$, $p_2$ and $p_3$ is bitangent at $Q$ in $p_2$ and in $p_3$. We also impose that $[1:1:1]$ belongs to the quintics.
One easily sees that the equation of such a quintic belongs to the following family:
\begin{align*}
\widetilde{\mathcal U} := \left\{Q(x,y,z) \right.= & a_1 x^4y + a_2\left(x^4z-2x^3z^2 + x^2z^3\right) + a_3 x^3yz + a_4 x^2y^2z + a_5 x^2yz^2 + a_6 xy^3z \\
&\left. +a_7xy^2z^2  +a_8xyz^3 +a_9 y^3z^2 + a_{10}y^2z^3 + a_{11}yz^4 + a_{12} y^4z = 0, \ \text{s.t.}  \sum_{i\neq 2}a_i=0\right\}. 
\end{align*}
Notice that $\widetilde{\mathcal U}$ has dimension $10$ and that  there is a finite number of projectivities acting on $\widetilde {\mathcal U}$. Therefore, 
the image $\mathcal U$ (of the smooth quintics in $\widetilde{\mathcal U}$) in $ \mathcal Q \subset \mathcal M_6$ is a $10$-dimensional irreducible subvariety.

Set $C = \{yz=0\}$. Then $C \cdot Q = 2p_0 + 4p_1 + 2p_2 +2p_3 = 2 D$, where $D = p_0 + 2p_1 +p_2 +p_3$.  We have $p_0 +2p_2 + 2p_3 \sim p_0 +4p_1$, hence $2p_2 + 2p_3 \sim 4p_1$. Thus, setting  
\begin{equation}\label{alpha}
\alpha: = {\mathcal O}_Q(p_2 + p_3 -2p_1),
\end{equation}
we have ${\alpha}^{\otimes 2} \cong  {\mathcal O}_Q$ and $\alpha \not\cong {\mathcal O}_Q$. 

As in Propositions~\ref{mu2} and~\ref{rank3}, set $L:= {\mathcal O}_Q(1)(-p_1) =   {\mathcal O}_Q(p_0 + 3p_1)$ and $N:= {\mathcal O}_Q(1)(p_1) \otimes \alpha = {\mathcal O}_Q(p_0 + 3p_1 +p_2 + p_3)$. The line bundle $N$ is isomorphic to ${\mathcal O}_Q(2)(-p_0 -p_1 -p_2-p_3)$, which is the linear system of conics passing through $p_0, p_1, p_2, p_3$. Therefore, they all contain the line $l_2$ as base locus,  and since $\div(l_2)(-p_0-p_2-p_3) = p_2 + p_3$, the base locus of $|N|$ is $p_2 + p_3$; hence $|N|  = (p_2 + p_3) + |p_0 + 3p_1|= (p_2 + p_3) + |L| $.  Assume $H^0(L) = \langle s_0, s_1\rangle$; then $H^0(N) = \langle ts_0, ts_1\rangle$, where $\div(t) = p_2 + p_3$. The quadric $\Gamma_1 := s_0 (ts_0) \odot s_1 (ts_1) - s_0 (ts_1) \odot s_1 (ts_0) = t^2 (s_0^2 \odot s_1^2 - (s_0s_1)^2) \in I_2(\omega_Q \otimes \alpha)$ has rank $3$, and we have 
$\rho_1 = \mu_2(\Gamma_1) = \mu_{1,L}(s_0 \wedge s_1) \cdot   \mu_{1,N}(ts_0 \wedge ts_1)$.
So
\begin{align*}
  \div\left(\rho_1\right) & = \div \left(\Gamma_{p_1}(Q)\right) -2p_1 + \div \left(\Gamma_{p_1}(Q)\right) -2p_1 + 2(p_2 + p_3) \\
  & = 2 \div (\Gamma_{p_1}(Q))-4 p_1 + 2(p_2+p_3) =  2 \div \left(\Gamma_{p_1}(Q)\right) + \div\left(\frac{y}{z}\right),
  \end{align*}
where, as usual, $\Gamma _x(Q)$ denotes the polar of $x$ with respect to $Q$. 
We have $\div(\Gamma_{p_1}(Q)) = 4p_1 + D_1$ for some degree $16$ effective divisor $D_1$, so $\div(\rho_1) = \div(\Gamma_{p_1}(Q)) + D_1 + 2p_2 +2 p_3$. Therefore, since $\div(\rho_1)  \in |{\mathcal O}_Q(8)| $ and $\div(\Gamma_{p_1}(Q)) \in |{\mathcal O}_Q(4)|$, we have $D_1 + 2p_2 +2 p_3 \in |{\mathcal O}_Q(4)|$. Since $H^0({\mathbb P}^2, {\mathcal O}_{{\mathbb P}^2}(4)) \cong H^0(Q, {\mathcal O}_{Q}(4))$, there exists an $h \in H^0({\mathcal O}_{{\mathbb P}^2}(4))$ such that 
$$
h \cdot \Gamma_{p_1}(Q) = \rho_1 =   \frac{y}{z} \cdot \left(\Gamma_{p_1}(Q)\right)^2  \ \ \   \  \mod \ Q,
$$
so $zh \Gamma_{p_1}(Q)= y (\Gamma_{p_1}(Q))^2   + RQ$ for some $R$; hence $\Gamma_{p_1}(Q) | R$, and there exists a constant $\lambda$ such that 
$$zh = y \Gamma_{p_1}(Q) + \lambda Q = y Q_y + \lambda Q.$$

A simple computation shows that taking $\lambda = -1$, we get
$$
y Q_y - Q= z\left(-a_2\left(x^4-2x^3z + x^2z^2\right)+ a_4 x^2y^2 + 2a_6 x y^3 + a_7 x y^2 z + 2 a_9 y^3 z + a_{10} y^2 z^2 + 3a_{12} y^4\right),
$$
hence $h= -a_2(x^4-2x^3z + x^2z^2)+ a_4 x^2y^2 + 2a_6 x y^3 + a_7 x y^2 z + 2 a_9 y^3 z + a_{10} y^2 z^2 + 3a_{12} y^4$. 

Denote by $|\overline{M}|$ the linear system of plane cubics passing through the points $p_0, p_1, p_2, p_3$ and whose tangent in $p_1$ is the line $l_1 = \{z=0\}$. 
A basis for the linear system $|\overline{M}|$ is  $\{x^2y, \ x^2z - xz^2, \  y^2z, \ xyz, \ yz^2\}$. A basis for $|2\overline{M}|$ is then easily computed as  
\begin{multline*}
\left\{x^4y^2, \ \left(x^4yz-x^3yz^2\right), \ x^2y^3z, \ x^3y^2z, \ x^2y^2z^2, \ \left(x^4z^2 + x^2z^4 -2x^3z^3\right), \ \left(x^3yz^2-x^2yz^3\right),\right.\\ 
\left.\left(x^2yz^3-xyz^4\right),  \ y^4z^2, \ xy^3z^2, \ y^3z^3, \ xy^2z^3, \  y^2z^4\right\}.
\end{multline*}
As in Remark~\ref{quartic-sextic}, we see that for any sextic $A \in H^0(\mathbb P^2,2\overline{M})$, there exist a quartic $\tilde{A}$ and a linear form $\gamma $ such that $A = C \tilde{A} + \gamma Q= yz \tilde{A} + \gamma Q$.  In fact, consider the restriction map
$$
H^0\left({\mathbb P}^2, 2\overline{M}\right) \longrightarrow H^0\left(Q, 2\overline{M}|_{Q}\right) = H^0\left(Q, \omega_Q^{\otimes 2}\right) = H^0\left(Q, {\mathcal O}_Q(4)\right) \cong H^0\left({\mathbb P}^2, {\mathcal O}_{{\mathbb P}^2}(4)\right),
$$ 
 which is injective.
Take a sextic $A \in H^0({\mathbb P}^2, 2\overline{M})$; then there exists a quartic $\tilde{A} \in H^0({\mathbb P}^2, {\mathcal O}_{{\mathbb P}^2}(4))$ such that  $\div(A|_{Q}) = \div(\tilde{A}|_{Q}) + 2D$. Hence $A|_{Q} = (\tilde{A} C)|_{Q}$, so  there exists a linear form $\gamma$ such that $A = \tilde{A} C + \gamma Q$.

We find a basis for the linear system $\Gamma$ of such quartics as follows. First we take all the elements of the given basis of $|2\overline{M}|$ which are divisible by $yz$, and we divide them by $yz$. In this way, we find $11$  quartics. Then we add $Q_z$ and $Q_y$ since $Q_y C - C_y Q = Q_y (yz) - zQ \in H^0(2\overline{M})$ and $Q_z C - C_z Q = Q_z (yz) - yQ  \in H^0(2\overline{M})$ by Lemma~\ref{Li-Ti}. 
So we get the following basis for $\Gamma$: 
\begin{multline*}
  \left\{\left(x^4-x^3z\right), \ x^2y^2, \ x^3y, \ x^2yz, \ \left(x^3z - x^2z^2\right), \ (x^2z^2-xz^3), \ y^3z, \ xy^2z, \ y^2z^2, \ xyz^2, \ yz^3, \right.\\
\left. \left(a_6xy^3 + a_{12}y^4\right), \ \left(a_1x^4 + a_3 x^3z + a_5 x^2z^2 + a_8xz^3 + a_{11} z^4\right)\right\}.
\end{multline*}

\begin{lem}
\label{Ai}
There exist $A_i \in |2\overline{M}|$, $i=0,1,2$,  and a cubic $S$ such that 
$$A_0 Q_x + A_ 1 Q_y + A_2 Q_z - (yz) hQ_y =  (yz)SQ.$$
\end{lem}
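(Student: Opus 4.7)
The plan is to reduce the claimed identity to a statement about the Jacobian ideal of $Q$, and then to build the sextics $A_i$ via the key factorization $L_1 = z^2 h$. First I would use the identity $zh = yQ_y - Q$ (derived immediately before the lemma with $\lambda = -1$) to rewrite
\[
(yz)hQ_y = y\cdot zh\cdot Q_y = y(yQ_y-Q)Q_y = y^2 Q_y^2 - yQ_y Q,
\]
and then apply Euler's identity $xQ_x+yQ_y+zQ_z=5Q$ to substitute $yQ_y$ inside $y^2 Q_y^2$, obtaining
\[
(yz)hQ_y = -xy\,Q_xQ_y - yz\,Q_yQ_z + 4yQ_y Q.
\]
The lemma is then equivalent to producing $A_i \in |2\overline{M}|$ and a cubic $S$ with
\[
A_0 Q_x + A_1 Q_y + A_2 Q_z = -xy\,Q_xQ_y - yz\,Q_yQ_z + (4yQ_y + yzS)Q.
\]

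Next I would exploit the factorization $L_1 = yzQ_y - zQ = z(yQ_y-Q) = z^2 h$, which is again a direct consequence of $zh = yQ_y - Q$. Since $L_1\in|2\overline{M}|$ by Lemma~\ref{Li-Ti}, this realizes the quartic $h$ as a factor of an element of $|2\overline{M}|$, and automatically encodes the delicate osculating condition at the double base point $p_1$ of $D$. A naive first attempt takes $A_0 = -xyQ_y$, $A_2 = -yzQ_y$, $A_1 = 0$: it produces the correct non-$Q$ terms on the right hand side, but these sextics lack the mult-$2$ vanishing at $p_0$, $p_2$, $p_3$ needed for $|2\overline{M}|$. I would correct them by adding syzygies of $(Q_x,Q_y,Q_z,Q)$: the Euler syzygies $(xP,yP,zP,-5P)$ for a well-chosen $P\in S^5$ (which simultaneously generates the required coefficient $4yQ_y+yzS$ in front of $Q$), together with Koszul syzygies $(RQ_y,-RQ_x,0,0)$ and their cyclic variants for $R\in S^2$. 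The parameters $P$, $R$ and $S$ are pinned down by a concrete linear algebra problem in the explicit bases of $|2\overline{M}|$ and $\Gamma$ listed just before the lemma, using that $Q_x,Q_y,Q_z\in\Gamma$.

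The main obstacle will be verifying that the corrected sextics $A_i$ simultaneously satisfy all the osculating conditions of $|2\overline{M}|$, especially at $p_1$: the quartic $h$ itself does not lie in $\Gamma$ (its projections onto the two-dimensional cokernel $S^4/\Gamma$, supported in the $\{xy^3,y^4\}$ and $\{x^4,x^3z,x^2z^2,xz^3,z^4\}$ directions, are nontrivial, and the Euler shifts alone do not span the full cokernel). The factorization $L_1=z^2 h$ is the structural identity that resolves this: rewriting the required equation as $A_1Q_y \equiv yzh\cdot Q_y + \text{corrections}$ and then absorbing $yzh\cdot Q_y = h(L_1+zQ) = hL_1 + zhQ$ via the element $L_1\in|2\overline{M}|$ yields the needed cancellation at $p_1$ (which the naive $yzh$ misses by a linear term in the tangent direction $z=0$ of $Q$ at $p_1$). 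Once $(A_0,A_1,A_2)$ has been placed in $|2\overline{M}|$, the cubic $S$ is forced by the residual $Q$-coefficient, and a direct check confirms that this residual is divisible by $yz$, so that $S$ is indeed a cubic as required.
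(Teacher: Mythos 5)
Your setup is reasonable and in fact parallel to the paper's: the identity $zh=yQ_y-Q$, the Euler relation, and the reduction to a search for syzygy corrections that place the $A_i$ in $|2\overline{M}|$ are the right ingredients, and your rewriting $(yz)hQ_y=-xy\,Q_xQ_y-yz\,Q_yQ_z+4yQ_yQ$ is correct. But the argument stops exactly where the content of the lemma begins: you assert that the Euler and Koszul corrections ``are pinned down by a concrete linear algebra problem'' without showing that this (a priori overdetermined) linear system has a solution. That existence statement \emph{is} the lemma. The paper settles it by one explicit computation: writing $A_i=(yz)\tilde A_i+\gamma_iQ$ with $\tilde A_i\in\Gamma$, the only obstruction is the class of $h$ in the two-dimensional cokernel $S^4/\Gamma$, and one checks that $h\equiv a_{12}y^4 \bmod \Gamma$; since $a_6xy^3+a_{12}y^4\in\Gamma$, this class is killed by the Euler shift with $T=a_6xy^2$ (and $Tx, Tz\in\Gamma$ for free), which produces the $\tilde A_i$ and hence the $A_i$ and $S$. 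Nothing in your write-up substitutes for this verification.

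Two specific assertions in your final paragraph are also off, in a way that suggests the computation was not actually done. First, the image of $h$ in $S^4/\Gamma$ is \emph{not} nontrivial in both directions: the component of $h$ supported on $x^4,x^3z,x^2z^2,xz^3,z^4$ is $-a_2(x^4-2x^3z+x^2z^2)=-a_2\bigl((x^4-x^3z)-(x^3z-x^2z^2)\bigr)$, which lies in $\Gamma$ exactly, so the class of $h$ is $a_{12}\,\overline{y^4}$ only. This is not a harmless slip: if the $z^4$-component were nonzero, no Euler shift $Ty$ (divisible by $y$) could cancel it and the construction would fail, so the vanishing of that component is precisely what makes the lemma true. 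Second, the factorization $T_1=z^2h$ is a correct identity but is not load-bearing as you use it: rewriting $yzhQ_y=hT_1+zhQ$ merely re-expresses a degree-$10$ polynomial and produces no candidate sextics, so it does not ``yield the needed cancellation at $p_1$.'' To close the gap you must compute $h$ modulo the explicit basis of $\Gamma$ and exhibit the cubic $T$ (equivalently your $P$, $R$, $S$), as the paper does.
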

\begin{proof}
By the above discussion, the sextic $A_i$ corresponds to a quartic $\tilde{A_i} \in \Gamma$ and a linear form $\gamma_i$ such that $A_i = (yz)\tilde{A_i} + \gamma_i Q$. So the statement can be rewritten as 
$(yz)( \tilde{A_1} Q_y + \tilde{A_ 0} Q_x + \tilde{A_2} Q_z)- (yz) hQ_y =  (yz)RQ = (yz) \frac{1}{5} R (yQ_y + xQ_x + zQ_z)$  for some cubic $R$. Hence we look for quartics $\tilde{A_i} \in \Gamma$ and for a cubic $T$ such that  $Q_y(\tilde{A_1} -h - Ty) + Q_x(\tilde{A_0} -Tx) + Q_z ( \tilde{A_2} - Tz) =0$.
Using the above basis of $\Gamma$, one immediately sees that $h \equiv a_{12} y^4 \mod \ \Gamma$. So if we take $T = a_6 xy^2$, we have 
\begin{gather*}
Ty = a_6 x y^3 \equiv -h = -a_{12} y^4 \mod \ \Gamma,\\
Tx = a_6 x^2y^2 \in \Gamma,\\
Tz = a_6 xy^2z \in \Gamma.
\end{gather*}
So we set $\tilde{A_1} = h + Ty \in \Gamma$, $\tilde{A_0} = Tx \in \Gamma$, $\tilde{A_2} = Tz \in \Gamma$, and we conclude the proof. 
\end{proof}

From Lemmas~\ref{Li-Ti} and~\ref{Ai},  we immediately get the following. 

\begin{prop}
\label{beta1}
The element $\rho_1 = \mu_2(\Gamma_1) \in H^0(Q, \omega_Q^{\otimes 4}$) belongs to the image of the map 
\begin{align*}
  H^0({\mathbb P}^2, 2\overline{M})^{\oplus 3} &\longrightarrow H^0(Q, \omega_Q^{\otimes 4}),\\
  (A_0, A_1, A_2) &\longmapsto (A_0T_0+ A_1 T_1 + A_2 T_2)|_{Q} -4 D.
  \end{align*}
In particular, $\tilde {\tau } (\mu_2 (\Gamma_1))=0$ in $R^4_F$.
\end{prop}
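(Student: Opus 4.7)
The plan is to combine the polynomial identity of Lemma~\ref{Ai} with the explicit formulas for the sextics $T_i$ coming from Lemma~\ref{Li-Ti}. Since here the conic is $C=yz$, so that $C_x=0$, $C_y=z$ and $C_z=y$, the three sextics read
\[
T_0 \;=\; yz\,Q_x,\qquad T_1 \;=\; yz\,Q_y - zQ,\qquad T_2 \;=\; yz\,Q_z - yQ,
\]
and hence $A_0T_0+A_1T_1+A_2T_2 = yz(A_0Q_x+A_1Q_y+A_2Q_z) - Q(zA_1+yA_2)$ for any choice of $A_i$.

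I will then take the $A_i$ and the cubic $S$ produced by Lemma~\ref{Ai}, so that the parenthesised sum equals $yz\,hQ_y + yzSQ$. Substitution gives
\[
A_0T_0+A_1T_1+A_2T_2 \;=\; (yz)^2\,hQ_y \;+\; Q\bigl[(yz)^2 S - zA_1 - yA_2\bigr].
\]
Restricting to $Q$ kills the $Q$-multiple and leaves $(yz)^2\,(hQ_y)|_Q \in H^0(Q,\mathcal O_Q(12))$, whose divisor manifestly contains $4D$. Dividing by $(yz)^2$---which is how the symbol ``$-4D$'' in the statement must be read---yields $(hQ_y)|_Q = \rho_1$, by the formula for $\rho_1$ derived just before Lemma~\ref{Ai}. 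This establishes that $\rho_1$ is in the image.

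For the ``in particular'' part, the plan is to recognise the map in the statement as the Koszul multiplication $f\colon H^0(S,\mathcal O_S(2M))^{\oplus 3}\to H^0(S,\mathcal O_S(4M))$ studied in Section~\ref{sec5}: under $H^0({\mathbb P}^2,2\overline M)\cong H^0(S,\mathcal O_S(2M))$ of Remark~\ref{quartic-sextic} the $T_i$ correspond to $L_i$ (Lemma~\ref{Li-Ti}), and modulo $H^0(S,\mathcal O_S(4M-Q'))$ the image of $f$ is $\tilde J^4_F/H^0(S,\mathcal O_S(4M-Q'))$, which by diagram~\eqref{diagrammone2} is exactly $\ker\tilde\tau$. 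Consequently $\tilde\tau(\rho_1)=0$ in $R^4_F$ will follow immediately.

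The only nontrivial input is Lemma~\ref{Ai} itself; once it is granted, the remaining argument is a substitution plus a translation between the viewpoints on $\mathbb P^2$, on $S$, and on $Q'\cong Q$. The main thing to guard against is the correct interpretation of the ``$-4D$'' bookkeeping: one must check that it matches the passage from $H^0(S,\mathcal O_S(4M))$ to $H^0(Q',\omega_{Q'}^{\otimes 4})\cong H^0(Q,\mathcal O_Q(8))$ used in diagram~\eqref{diagrammone2}, but this is forced by degree counting and by the definitions of $T_i$ and $\tilde\tau$.
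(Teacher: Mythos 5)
Your argument is correct and follows essentially the same route as the paper: both proofs combine Lemma~\ref{Li-Ti} and Lemma~\ref{Ai} to show that $(\sum A_iT_i)|_Q$ equals $(yz)^2(hQ_y)|_Q$, identify the result with $\rho_1$ after removing the divisor $4D$, and then conclude $\tilde\tau(\rho_1)=0$ from the fact that the map in the statement is the Koszul multiplication $f$ followed by restriction to $Q'$, whose image is $\ker\tilde\tau$ by diagram~\eqref{diagrammone2}. Your version merely makes the polynomial bookkeeping and the final appeal to the definition of $\tilde\tau$ a bit more explicit than the paper's.
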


\begin{proof}
Set $x_1 =y$, $x_0 = x$, $x_2 = z$. Recall that by Lemma~\ref{Li-Ti}, we have $(\sum A_i  T_i)|_{Q} = (\sum A_i Q_{x_i})|_{Q} + C \cdot Q = (\sum A_i Q_{x_i})|_{Q} + 2 D$. 

We have $\div((hQ_{x_1})|_{Q}) = \div(\rho_1)$. So by Lemma~\ref{Ai}, we get  $(\sum A_i Q_{x_i})|_{Q} = 2 D + \div((h Q_{x_1})|_{Q})=2D +  \div(\rho_1)$. Thus  we obtain
$$
\left(\sum A_i T_i\right)|_{Q} - 4 D = \left(\sum A_i Q_{x_i}\right)|_{Q}- 2 D = \div\left(\rho_1\right).
$$
By the definition of $\tilde{\tau}$, we see that $\tilde{\tau}(\mu_2(\Gamma_1)) =0$.
\end{proof}

\begin{cor}
\label{zero}
For the cubic threefold $X$ associated with a quintic in ${\mathcal U}$ together with the $2$-torsion element $\alpha$ as in \eqref{alpha}, the composition $m_F\circ II_{\cC \slash \cA_5}$ is identically zero. 
\end{cor}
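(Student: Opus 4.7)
The plan is to combine the preparatory results into a short deduction reducing the theorem to the vanishing at a Zariski dense family of cubics. The crucial leverage comes from Theorem~\ref{identity}, which says that at each point $JX \in \cC$ the composition $m_F \circ II_{\cC \slash \cA_5}$ is a scalar multiple of the identity on $J^2_F \cong (R^1_F)^*$. Writing the scalar as a function $\lambda\colon \cC \rightarrow \mathbb{C}$, the task collapses to showing $\lambda$ vanishes on a Zariski dense subset, since $\lambda$ is (at worst) real-analytic on the orbifold $\cC$, being built from the Chern connection of the Siegel metric together with algebraic multiplication and lifting maps.

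To produce such a dense subset, I would invoke Theorem~\ref{dominant}: the map $\overline{P_{\mathcal U}}\colon \overline{\mathcal{RU}} \rightarrow \cC$ is dominant, so its image contains a non-empty Zariski open subset of $\cC$. For any pair $(Q,\alpha) \in \mathcal{RU}$ with $X = P(Q,\alpha)$, Proposition~\ref{rank3}, together with the explicit description of $\mathcal U$, distinguishes a rank-$3$ polar quadric $\Gamma_1 \in I_2(\omega_Q \otimes \alpha) \subset J^2_F$. Since the endomorphism is a scalar multiple of the identity, it suffices to verify $(m_F \circ II_{\cC \slash \cA_5})(\Gamma_1) = 0$ on this single vector; the scalar $\lambda(JX)$ will then be forced to vanish.

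The last step is to establish this single equation. Theorem~\ref{diagramone} furnishes the factorization \eqref{tautilde_mu2}, namely
\[
m_F \circ \left(II_{\cC \slash \cA_5}\right)|_{I_2(\omega_Q \otimes \alpha)} = \widetilde{\tau} \circ \mu_2,
\]
reducing the question to computing $\widetilde{\tau}(\mu_2(\Gamma_1))$. Proposition~\ref{beta1} does exactly this: $\mu_2(\Gamma_1)$ lies in the image of the map $H^0({\mathbb P}^2, 2\overline{M})^{\oplus 3} \rightarrow H^0(Q, \omega_Q^{\otimes 4})$, which forces its image in $R^4_F$ under $\widetilde{\tau}$ to vanish. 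Chaining these gives $\lambda \equiv 0$ on the image of $\overline{P_{\mathcal U}}$, and real-analytic continuation over $\cC$ completes the argument.

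The hard part is not in this final assembly, which is short, but in the ingredients already in place: the proportionality to the identity (Theorem~\ref{identity}, which exploits that $J^2_F$ is spanned by polars of points and runs an eigenspace argument along lines $l \subset X$), and the construction of the lift $\widetilde{\tau}$ together with the vanishing on rank-$3$ polars (Proposition~\ref{beta1}, resting on the Del Pezzo surface $S$ and the Jacobian-ideal identity of Lemma~\ref{Ai}). Once these are granted, the main theorem follows essentially as a corollary of the dominance of $\overline{P_{\mathcal U}}$ and the scalar-identity structure.
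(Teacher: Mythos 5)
Your argument for this corollary is correct and coincides with the paper's: Theorem~\ref{identity} reduces the claim to the vanishing of the scalar, equation~\eqref{tautilde_mu2} identifies $m_F\circ (II_{\cC\slash\cA_5})|_{I_2(\omega_Q\otimes\alpha)}$ with $\tilde{\tau}\circ\mu_2$, and Proposition~\ref{beta1} gives $(\tilde{\tau}\circ\mu_2)(\Gamma_1)=0$ on the distinguished rank-$3$ quadric, forcing the scalar to vanish. The surrounding material on dominance of $\overline{P_{\mathcal U}}$ and continuation belongs to the deduction of Theorem~\ref{main} from this corollary rather than to the corollary itself, but it matches the paper's strategy there as well.
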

\begin{proof}
By Theorem~\ref{identity},  $m_F\circ II_{\cC \slash \cA_5}$ is a multiple of the identity. By Theorem~\ref{tautilde_mu2}, we have 
$$ \tilde{\tau} \circ \mu_2 = m_F\circ (II_{\cC \slash \cA_5})|_{I_2(\omega_Q \otimes \alpha)}.$$
In Proposition~\ref{beta1}, we have shown that $(\tilde{\tau} \circ \mu_2)(\Gamma_1) =0$, so this concludes the proof. 

\end{proof}

To conclude the proof of Theorem~\ref{main}, notice that Corollary~\ref{zero} says that the restriction of $m_F\circ II_{\cC \slash \cA_5}$ to the image $\overline{{P}_{\mathcal U}}({\mathcal {RU}}) $ is zero. By Theorem~\ref{dominant}, the map $\overline{P_{\mathcal U}}\colon \overline{\mathcal {RU}}\rightarrow \mathcal C$ is dominant, so  $m_F\circ II_{\cC \slash \cA_5}$  is identically zero.


\providecommand{\bysame}{\leavevmode\hbox to3em{\hrulefill}\thinspace}
\providecommand{\href}[2]{#2}


\begin{thebibliography}{Mum74+++}


\bibitem[AR96]{ar}
A.~Adler and S.~Ramanan, \emph{Moduli of abelian varieties}, Lecture Notes in
  Math., vol.~1644, Springer-Verlag, Berlin, 1996.

\bibitem[ACT02]{act}
D.~Allcock, J.\,A.~Carlson and D.~Toledo, \emph{The complex hyperbolic geometry
  of the moduli space of cubic surfaces}, J.~Algebraic Geom.\ \textbf{11}
  (2002), no.~4, 659--724.

\bibitem[ACT11]{act1}
\bysame, \emph{The moduli space of cubic threefolds as a ball quotient}, Mem.\
Amer.\ Math.\ Soc.\ \textbf{209} (2011), no.~985.

\bibitem[Bea77]{be_jacob}
A.~Beauville, \emph{Vari\'{e}t\'{e}s de {P}rym et jacobiennes
  interm\'{e}diaires}, Ann.\ Sci.\ \'{E}cole Norm.\ Sup.\ (4) \textbf{10} (1977),  no.~3, 309--391.

\bibitem[Bea86]{beauville_mono}
\bysame, \emph{Le groupe de monodromie des familles universelles
  d'hypersurfaces et d'intersections compl\`etes}, in \emph{Complex analysis and
  algebraic geometry} ({G}\"{o}ttingen, 1985), pp.~8--18, Lecture Notes in Math., vol.~1194, Springer, Berlin, 1986.

\bibitem[CG80]{carl_g}
J.\,A.~Carlson and P.\,A.~Griffiths, \emph{Infinitesimal variations of
{H}odge structure and the global {T}orelli problem}, in \emph{Journ\'{e}es de
{G}\'{e}ometrie {A}lg\'{e}brique d'{A}ngers} ({A}ngers, 1979), pp.~51--76, 
Sijthoff \& Noordhoff, Alphen aan den Rijn---Germantown, MD, 1980.

\bibitem[CG72]{cg}
C.\,H.~Clemens and P.\,A.~Griffiths, \emph{The intermediate {J}acobian of the
  cubic threefold}, Ann.\ of Math.\ (2) \textbf{95} (1972), 281--356.

\bibitem[CNP12]{cnp}
  A.~Collino, J.\,C.~Naranjo and G.\,P.~Pirola, \emph{The {F}ano normal function},  J.~Math.\ Pures Appl.~(9) \textbf{98} (2012), no.~3, 346--366.
  
\bibitem[CF09]{cf-michigan}
E.~Colombo and P.~Frediani, \emph{Some results on the second {G}aussian map for
  curves}, Michigan Math.~J.\ \textbf{58} (2009), no.~3, 745--758.

\bibitem[CF13]{cf-advances}
\bysame, \emph{Prym map and second {G}aussian map for {P}rym-canonical line
  bundles}, Adv.\ Math.\ \textbf{239} (2013), 47--71.

\bibitem[CFG15]{cfg}
E.~Colombo, P.~Frediani and A.~Ghigi, \emph{On totally geodesic submanifolds
  in the {J}acobian locus}, Internat.~J.\ Math.\ \textbf{26} (2015), no.~1. 

\bibitem[CPT01]{cpt}
E.~Colombo, G.\,P.~Pirola and A.~Tortora, \emph{Hodge-{G}aussian maps}, Ann.\
  Scuola Norm.\ Sup.\ Pisa Cl.\ Sci.~(4) \textbf{30} (2001), no.~1, 125--146.

\bibitem[DS81]{donsmith}
R.~Donagi and R.\,C.~Smith, \emph{The structure of the {P}rym map}, Acta Math.\
\textbf{146} (1981), no.~1-2, 25--102.

\bibitem[Huy23]{huybrechts_notes}
  D.~Huybrechts, \emph{The geometry of cubic hypersurfaces}, 
Cambridge Stud.\ Adv.\ Math., vol.~206, 
Cambridge Univ.\ Press, Cambridge, 2023. Final draft available from \url{http://www.math.uni-bonn.de/people/huybrech/Notes.pdf}

\bibitem[MT03]{mt}
K.~Matsumoto and T.~Terasoma, \emph{Theta constants associated to cubic
  threefolds}, J.~Algebraic Geom.\ \textbf{12} (2003), no.~4, 741--775.

\bibitem[Moo98]{moonen}
  B.~Moonen, \emph{Linearity properties of {S}himura varieties. {I}}, J.~Algebraic Geom.\ \textbf{7} (1998), no.~3, 539--567.
  
\bibitem[Mum74]{mu}
D.~Mumford, \emph{Prym varieties. {I}}, in: \emph{Contributions to analysis} (a collection of papers dedicated to {L}ipman {B}ers), pp.~325--350, Academic Press, New York-London, 1974.

\bibitem[Sat80]{sat}
I.~Satake, \emph{Algebraic structures of symmetric domains}, Kan\^{o} Memorial
  Lectures, vol.~4, Iwanami Shoten, Tokyo; Princeton Univ.\ Press,
  Princeton, NJ, 1980.

\bibitem[Tju71]{tj}
A.\,N.~Tjurin, \emph{The geometry of the {F}ano surface of a nonsingular cubic
  {$F\subset P\sp{4}$}, and {T}orelli's theorems for {F}ano surfaces and
  cubics}, Izv.\ Akad.\ Nauk SSSR Ser.\ Mat.\ \textbf{35} (1971), 498--529.

\bibitem[Voi86]{vo_invent}
C.~Voisin, \emph{Th\'{e}or\`eme de {T}orelli pour les cubiques de {${\bf
  P}^5$}}, Invent.\ Math.\ \textbf{86} (1986), no.~3, 577--601.

\bibitem[Voi07]{vo_book}
  \bysame, \emph{Hodge theory and complex algebraic geometry. {II}} (translated from the French by L.~Schneps), Cambridge Stud.\ Adv.\ Math., vol.~77, Cambridge Univ.\  Press, Cambridge, 2007.
  
\end{thebibliography}
\end{document}